\documentclass{amsart}
\usepackage{array}
\newcolumntype{P}[1]{>{\raggedright\let\newline\\\arraybackslash\hspace{0pt}}m{#1}}
\usepackage{graphicx,verbatim,amsmath,amssymb,amsthm,amsfonts, amsxtra,ifthen,mathtools,enumitem,bbm}	
\usepackage{xcolor}
\definecolor{darkblue}{cmyk}{1,0.3,0,0.1}  
\usepackage[bookmarks=true, bookmarksopen=false,%
    colorlinks=true,%
    linkcolor=darkblue,%
    citecolor=darkblue,%
    filecolor=darkblue,%
    menucolor=darkblue,%
    linktoc=all,%
    urlcolor=darkblue
]{hyperref}

\newtheorem{proposition}{Proposition}[section]
\newtheorem{theorem}[proposition]{Theorem}
\newtheorem{corollary}[proposition]{Corollary}
\newtheorem{lemma}[proposition]{Lemma}

\theoremstyle{definition}
\newtheorem{example}[proposition]{Example}

\theoremstyle{remark}
\newtheorem{remark}[proposition]{Remark}

\numberwithin{equation}{section}

\newcommand{\newword}[1]{\textbf{\emph{#1}}}

\newcommand{\integers}{\mathbb Z}
\newcommand{\rationals}{\mathbb Q}

\newcommand{\reals}{\mathbb R}

\newcommand{\thet}{\vartheta}

\newcommand{\sgn}{\operatorname{sgn}}
\newcommand{\vsgn}{\mathbf{sgn}}

\newcommand{\uf}{{\operatorname{uf}}}
\newcommand{\fr}{{\operatorname{fr}}}

\newcommand{\set}[1]{{\lbrace #1 \rbrace}}
\newcommand{\sett}[1]{{\bigl\lbrace #1 \bigr\rbrace}}

\newcommand{\br}[1]{{\langle #1 \rangle}}

\newcommand{\F}{{\mathcal F}}
\newcommand{\D}{{\mathfrak D}}
\newcommand{\N}{{\mathcal N}}
\newcommand{\p}{{\mathfrak p}}

\newcommand{\g}{\mathbf{g}}

\renewcommand{\c}{\mathbf{c}}
\renewcommand{\b}{\mathbf{b}}
\renewcommand{\k}{\mathbbm{k}}
\newcommand{\kk}{{\boldsymbol{k}}}
\newcommand{\ellell}{{\boldsymbol{\ell}}}

\renewcommand{\v}{\mathbf{v}}
\renewcommand{\u}{\mathbf{u}}
\newcommand{\w}{\mathbf{w}}
\newcommand{\tB}{{\tilde{B}}}

\newcommand{\V}{\mathcal{V}}
\newcommand{\U}{\mathcal{U}}

\newcommand{\Dom}{\operatorname{Dom}}

\newcommand{\Hom}{\operatorname{Hom}}
\newcommand{\Scat}{\operatorname{Scat}}
\newcommand{\Fan}{\operatorname{Fan}}

\newcommand{\can}{\underline{\operatorname{can}}}

\renewcommand{\d}{{\mathfrak d}}

\renewcommand{\c}{{\mathbf c}}

\newcommand{\bl}{\gamma}

\newcommand{\const}{c}

\newcommand{\RSChar}{\Phi}
\newcommand{\RS}{\RSChar}

\binoppenalty=\maxdimen
\relpenalty=\maxdimen

\author{Nathan Reading}
\author{Salvatore Stella}
\title{Mutation of theta functions}
\address[N. Reading]{Department of Mathematics, North Carolina State University, Raleigh, NC, USA}
\address[S. Stella]{Dipartimento di Ingegneria e Scienze dell'Informazione e Matematica, Università degli Studi dell'Aquila, IT}
\thanks{Nathan Reading was partially supported by the National Science Foundation under Grant Number DMS-2054489 and by the Simons Foundation under award number 581608.\\ 
\indent Salvatore Stella was partially supported by PRIN 20223FEA2E - PE1 and by INdAM - GNSAGA}

\begin{document}

\begin{abstract}
We give an account of mutation of theta functions in cluster scattering diagrams, starting with a notion of mutation that is related to, but different from, the notion of mutation defined by Gross, Hacking, Keel, and Kontsevich.
This different approach to mutation leads to several applications.
Three of the applications simplify the process of computing structure constants for multiplication of theta functions, and these are used in another paper on cluster scattering diagrams of affine type.
Notable in these three applications is the appearance of mutation symmetries and dominance regions.
The other two applications have to do with pointed reduced bases, a variation on the pointed bases of Fan Qin.
We give a characterization of pointed reduced bases analogous to Qin's characterization of pointed bases.
All of these applications take place in a version of Gross, Hacking, Keel, and Kontsevich's canonical algebra that can be constructed for an arbitrary exchange matrix.  
\end{abstract}

\maketitle

\setcounter{tocdepth}{2}
\tableofcontents

\section{Introduction} 
Cluster scattering diagrams were defined by Gross, Hacking, Keel, and Kontsevich \cite{GHKK} to prove longstanding structural conjectures about cluster algebras.  

The key fact connecting cluster scattering diagrams to cluster algebras is that cluster variables (and more generally cluster monomials) can be computed as theta functions for the cluster scattering diagram.
Theta functions, in turn, are computed as sums of monomials obtained from piecewise-linear curves called broken lines.

In this paper, we give a detailed account of how broken lines and theta functions change when the initial seed is mutated.
This account of mutation can in principle be recovered from \cite[Proposition~3.6]{GHKK} and vice versa, but we follow a very different convention for what mutation should mean in the context of scattering diagrams.
(For a comparison of the two notions of mutation, see \cite[Section~4]{scatfan}.
Briefly, the difference is that mutation in \cite{GHKK} moves the ``positive chamber'', while mutation in this paper fixes the positive chamber.)

The notion of mutation contemplated here is motivated by the notion of ``initial seed mutations'' of cluster algebras and also motivated by applications to two related basic problems:
Determining theta functions for a cluster scattering diagram and determining the structure constants for multiplication of theta functions.

Given an $n\times n$ exchange matrix $B$ and a sequence $\kk$ of indices, the associated \newword{mutation map} $\eta_\kk^B$ takes a vector in $\reals^n$, places it as a coefficient row under $B$, mutates at indices $\kk$, and reads the new coefficient row.
Mutation of cluster scattering diagrams amounts to applying a mutation map to walls and adjusting the scattering terms on the walls appropriately.
(A precise statement is reproduced here as Theorem~\ref{mut thm}.  Versions of this fact are in \cite[Section~3]{CGMMRSW}, \cite[Lemma~5.2.1]{Muller} and \cite[Theorem~4.2]{scatfan}.
The equivalent fact using the other notion of mutation is \cite[Theorem~1.24]{GHKK}.)

The key technical lemma in this paper (Lemma~\ref{mut broken line}) is that theta functions also mutate by applying mutation maps and adjusting the (Laurent monomial) labels appropriately.
Rather than ``translating'' the result from \cite[Proposition~3.6]{GHKK} which proves it for the other notion of mutation, it is more straightforward to prove it here directly.

To make Lemma~\ref{mut broken line} apply to an arbitrary initial exchange matrix $B$, we work in an algebra (the \newword{small canonical algebra}, defined in Section~\ref{canon sec}) that, loosely speaking, is ``the algebra generated by the theta functions''.
In \cite{GHKK}, a canonical algebra is only defined when products of theta functions expand as finite sums of theta functions.
Here, we appeal to the fact that theta functions can in any case be multiplied as formal power series to define an algebra (however poorly behaved) in which our result on structure constants makes sense.
To make the small canonical algebra a reasonable setting for mutation of theta functions, we need nondegeneracy conditions on the extended exchange matrix~$\tB$ (which is taken to be wide rather than tall).
We say that $\tB$ has \newword{nondegenerate coefficients} if the rows that are adjoined right of $B$ to make $\tB$ are linearly independent.
This is stronger than the requirement in \cite{GHKK} that the rows of~$\tB$ are linearly independent.
(However, we point out in Section~\ref{coeff sec} that there is a reasonable way to define some theta functions under weaker assumptions.)
For our results on mutating theta functions, we assume \newword{signed-nondegenerating coefficients}, meaning that any mutation of $\tB$ has nondegenerate coefficients, with consistent signs in coefficient rows.
A sufficient condition for signed-nondegenerating coefficients is that there exists a seed at which the coefficients are principal.

Lemma~\ref{mut broken line} implies the following result, stated explicitly later as Theorem~\ref{2 muts}.
\begin{theorem}
  Theta functions relative to mutation-equivalent exchange matrices with signed-nondegenerating coefficients can be obtained from one another by multiplication with an appropriate Laurent monomial in frozen variables.
\end{theorem}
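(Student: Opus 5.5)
The plan is to deduce the statement from the key technical lemma, Lemma~\ref{mut broken line}, which describes how broken lines mutate. First fix the precise setup. Let $\tB$ have signed-nondegenerating coefficients, and let $\tB'=\mu_\kk(\tB)$ for a sequence $\kk$ of indices; let $B'$ be its principal part. Theorem~\ref{mut thm} says that the cluster scattering diagram of $\tB'$ is obtained from that of $\tB$ by applying the mutation map $\eta_\kk^B$, which is piecewise linear, to each wall and adjusting the scattering terms. Here I would work with the scattering diagrams in the $\reals^n$ coordinates, the ``$g$-vector'' or coefficient-row space. The frozen part is carried along through the extended exponents, and this is exactly where nondegenerate coefficients are used.

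I would then fix a generic endpoint $p$ and an initial exponent $m$, and compare the theta function $\thet_{m,p}$ for $\tB$ with $\thet'_{\eta_\kk^B(m),\eta_\kk^B(p)}$ for $\tB'$. Lemma~\ref{mut broken line} should give a bijection between the two sets of broken lines: apply $\eta_\kk^B$ to the support of each broken line, and change each Laurent monomial label on a linear domain of $\eta_\kk^B$ by the linear map, lifted to extended exponents, that $\eta_\kk^B$ restricts to there. Summing the final monomials over all broken lines, the theta function for $\tB'$ equals the image of the theta function for $\tB$ under the ring map induced by that linear map. The map is taken on the domain of linearity containing $p$, which I would choose inside the positive chamber; that chamber is fixed by this notion of mutation.

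The remaining step, which I expect to be the main obstacle, is to identify this ring map on the region containing $p$. I want to show that it acts on monomials $x^{\tilde m}$ by fixing the mutable coordinates and multiplying by a Laurent monomial in the frozen variables that depends only on the mutable exponent, through a linear function. Then $\thet'$ equals $\thet$ times a single frozen Laurent monomial, because all terms of $\thet_m$ share the same image. More carefully, each term differs from $x^{m}$ by $\hat y$-monomials, and the frozen corrections on those terms must cancel. To see this I would use that $\eta_\kk^B$ is the identity on the positive chamber up to the frozen part, so the mutable coordinates of the labels agree there. The frozen coordinates of the mutated labels are determined by requiring compatibility with the new extended matrix, since each label is the initial label plus a combination of the columns of $\tB'$ in the frozen rows. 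Signed-nondegeneracy, via the consistent signs of the coefficient rows, should guarantee that along the sequence $\kk$ the extra frozen terms $[\pm b_{ik}]_+$ pick up contributions only from the initial exponent $m$ and not from the $\hat y$-increments. Finally I would record the monomial explicitly, as the product over the mutation steps of the frozen factors $x_{\text{fr}}^{[\,\cdot\,]_+}$ evaluated at $m$. I would check its consistency under composing sequences, so that the statement holds for any two mutation-equivalent matrices and not only for one-step mutations.
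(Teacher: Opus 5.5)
Your proposal has a genuine gap. It comes from reading ``mutation fixes the positive chamber'' as a statement about $\eta_\kk^B$, which it is not. In the introduction, that phrase means only that $\Scat(\mu_k(\tB))$ still has $D$ as its positive chamber, so its theta functions $\thet^{\tB'}_{m'}$ (with $\tB'=\mu_k(\tB)$, $m'=\eta_k^B(m)$) are still computed with endpoint in $D$. The map $\eta_k^B$ itself negates the $f_k$-coordinate; on the half-space $\br{v,e_k}\ge0$ it is the linear map $v\mapsto v-2\br{v,d_ke_k}f_k+\br{v,d_ke_k}\sum_{j\in I_\uf}[\epsilon_{kj}]_+f_j$.

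So for $Q$ in the interior of $D$, the point $\eta_k^B(Q)$ lies in the chamber across $e_k^\perp$ (see Figure~\ref{theta g2 eta1 fig}). Lemma~\ref{mut broken line} therefore relates $\thet^\tB_{Q,m}$ to $\thet^{\tB'}_{\eta_k^B(Q),m'}$, not to $\thet^{\tB'}_{m'}$. Moreover, the ring map it induces on labels does not fix the mutable coordinates: it sends $z_k$ to $(z'_k)^{-1}(\sigma'_k)^{-[\sgn(\sigma_k)]_+}\prod_{j}(z'_j)^{[\epsilon_{kj}]_+}$.

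The step you flag as the main obstacle is therefore aimed at a false statement. No invertible monomial change of variables, even combined with a frozen monomial factor, carries $\thet^\tB_m$ to $\thet^{\tB'}_{m'}$. Example~\ref{theta g2 eta1 replace ex} shows this concretely: the $F$-polynomial of $\thet_{[-2,3]}$ has seven terms with coefficients $1,2,3,1,3,3,1$, while that of $\thet^{\mu_1(\tB)}_{[2,-3]}$ has five terms with coefficients $1,3,3,1,1$.

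The missing ingredient is a change of endpoint. By \cite[Theorem~3.5]{GHKK}, moving the endpoint of a theta function is effected by a path-ordered product. For a single mutation, a path from $\eta_k^B(Q)$ to $Q$ crosses only the wall $(e_k^\perp,1+\zeta'_k)$. Hence $\thet^{\tB'}_{Q,m'}$ is obtained from $\thet^{\tB'}_{\eta_k^B(Q),m'}$ by replacing $z'_k$ with $z'_k(1+\zeta'_k)^{-1}$.

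Composed with the monomial substitution from Lemma~\ref{mut broken line}, this gives exactly the $z$- and $\sigma$-substitutions of Proposition~\ref{mutate subs}. The binomial factor $1+\zeta'_k$ is precisely what no linear map on exponents can produce. What is left over is the factor $(\sigma'_k)^{[\sgn(\sigma_k)\br{m,d_ke_k}]_+}$, which is the post-multiplication built into $\eta_k^B(\bl)$. It is the same for every broken line because it uses $m$ rather than the labels $m_L$.

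This is also why the variables in the statement must be identified via \eqref{exch rel}--\eqref{hy mut}. It is also why you should handle one mutation and then induct on the length of $\kk$, rather than treat $\kk$ all at once. For a longer sequence, the path from $\eta_\kk^B(Q)$ back to $D$ typically crosses several walls, so the path-ordered product is no longer a single exchange relation. Your guess at the shape of the final frozen monomial is roughly right, but the route you outline cannot reach it.
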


Section~\ref{app sec} contains various applications of Theorem~\ref{2 muts}.
One can think of these as applications of the notion of mutation used in this paper.

Three of these applications simplify the computation of structure constants for multiplication of theta functions.
One application (Theorem~\ref{finite orbit}) exploits mutation-symmetry.
(A \newword{mutation-symmetry} of an exchange matrix is a sequence of mutations that preserves the exchange matrix).
Roughly, the simplification is as follows:
When we multiply theta functions indexed by vectors in finite orbits under the mutation-symmetry, the product expands as a sum of theta functions indexed by vectors in finite orbits.
Slightly more precisely, we must also assume that the theta functions being multiplied are indexed by vectors in a special subset of the ambient lattice (called $\Theta$ in \cite{GHKK}), so that the product expands as a finite sum.
Another application (Proposition~\ref{mut pair}) points out a scenario where pairs of broken lines mutate together.
A third application (Theorem~\ref{B cone prod}) shows that the product of theta functions whose $\g$-vectors are all in the same cone of the mutation fan expands as a combination of theta functions whose $\g$-vectors are in the dominance region of the $\g$-vector of the product.
(The dominance region is a formulation due to Rupel and Stella of Fan Qin's dominance partial order on $\g$-vectors \cite[Section~3.1]{FanQin}.)
Theorem~\ref{B cone prod} is a corollary of the more precise but more technical Theorem~\ref{B cone prod N}.

These simplifications in computing structure constants along with direct applications of the technical lemma are crucial in \cite{afftheta}, where we give a complete description of theta functions in the acyclic affine case, in the context of a combinatorial model of the cluster scattering diagram developed in \cite{affscat}, discover ``imaginary'' exchange relations among theta functions, and identify an ``imaginary subalgebra'' of the small canonical algebra that is related to (and in some cases isomorphic to) a generalized cluster algebra of finite type.

The remaining applications of Theorem~\ref{2 muts} have to do with \newword{pointed reduced bases} of the small canonical algebra, a variation on Qin's pointed bases \cite[Theorem~1.2.1]{FanQin}.
We give a characterization (Theorem~\ref{point precise}) of pointed reduced bases analogous to Qin's characterization \cite[Theorem~1.2.1]{FanQin} of pointed bases.
We give both precise necessary and sufficient conditions for a set to be a pointed reduced basis and also simpler necessary conditions, the latter being phrased in terms of dominance regions.
For more details on the relationship between \cite[Theorem~1.2.1]{FanQin} and Theorem~\ref{point precise}, see Remark~\ref{FQ remark}.
We also define, in some cases, a basis for the small canonical algebra called the \newword{ray basis}.  
In the case of a marked surface, the ray basis is the bangles basis of \cite{MSWbases}.

Some ideas in this paper are inspired by the ideas in Fan Qin's paper \cite{FanQin}, although our conventions, methods, and goals are different enough that there is little actual overlap between the two papers.

\section{Scattering diagrams and theta functions}
We now introduce background material on scattering diagrams and their theta functions.

\subsection{Context for scattering diagrams}\label{context sec}
The underlying data for a scattering diagrams is a finite-dimensional lattice with a distinguished basis, a distinguished subset of the basis, and a skew-symmetric bilinear form on the lattice.
In \cite{GHKK}, the underlying data is divided into ``fixed data'' and ``seed data''.
Later in the paper, we will (crucially) take a different point of view on what is fixed and what changes, but for now we make no distinction between fixed data and seed data.

The scattering diagram itself lives in the dual space, and involves a set of indeterminates in bijection with the basis elements.
We arrange the relevant definitions into four parts, related to: the index set; the lattice, the dual lattice; and the indeterminates.
Then we show how all of this data arises from the choice of an extended exchange matrix.

\subsubsection*{Definitions related to the index set}
\begin{itemize}
\item A finite index set $I$.
\item $I_\uf\subseteq I$ (``unfrozen'') and $I_\fr=I\setminus I_\uf$ (``frozen'').
\item Positive integers $(d_i:i\in I)$ with $\gcd(d_i:i\in I)=1$.
\end{itemize}

\subsubsection*{Definitions related to the lattice}
\begin{itemize}
\item A lattice $N$ with basis $(e_i:i\in I)$
\item The finite-index sublattice $N^\circ\subseteq N$ spanned by $(d_ie_i:i\in I)$.
\item The sublattice $N_\uf$ spanned by $(e_i:i\in I_\uf)$.
\item $N^{0+}_\uf=\set{\sum_{i\in I_\uf}a_ie_i:a_i\in\integers,a_i\ge0}$ and $N^+_\uf=N^{0+}_\uf\setminus \{0\}$, the nonnegative and positive parts of $N_\uf$.
\item $V=N_\uf\otimes\reals$, the ambient vector space of $N_\uf$ (not of $N$).
\item $\set{\,\cdot\,,\,\cdot\,}:N\times N\to \rationals$, a skew-symmetric bilinear form, chosen so that $\set{N_\uf,N^\circ}\subseteq\integers$ and $\set{N,N_\uf\cap N^\circ}\subseteq\integers$.
\item $\epsilon_{ij}=\set{e_i,d_je_j}$ (integers except possibly when $i,j\in I_\fr$).  
\item An element of a lattice is \newword{primitive} if it is not a positive integer multiple of some \emph{other} element of the lattice.
Given a primitive element $n\in N$, let $n^\circ$ be the primitive element of $N^\circ$ that is a positive multiple of $n$.
\item We partially order the lattice $N$ \newword{componentwise} relative to the basis $(e_i:i\in I)$.
That is, $n\le n'\in N$ if and only if $n=\sum_{i\in I}a_ie_i$ and $n'=\sum_{i\in I}a'_ie_i$ with $a_i\le a'_i$ for all $i\in I$.
\end{itemize}

\subsubsection*{Definitions related to the dual}
\begin{itemize}
\item $M=\Hom(N,\integers)$, the dual lattice to $N$, with basis $(e^*_i:i\in I)$ dual to $(e_i:i\in I)$.
\item $M^\circ=\Hom(N^\circ,\integers)$, the finite-index superlattice of $M$ spanned by \mbox{$(f_i:i\in I)$} with $f_i=d_i^{-1}e_i^*$.
\item \mbox{$\br{\,\cdot\,,\,\cdot\,}:M^\circ\times N\to\rationals$}, the natural pairing.
\item The sublattice $M^\circ_\uf$ spanned by $(f_i:i\in I_\uf)$.
\item $V^*$, the dual vector space to $V$, spanned by $\set{e^*_i:i\in I_\uf}$.
\item $\br{\,\cdot\,,\,\cdot\,}:V^*\times V\to\reals$, the natural pairing. 
\item The elements $\sum_{j\in I}\epsilon_{ij}f_j$ for $i\in I_\uf$ are required to be linearly independent, but this is really a condition on $\set{\,\cdot\,,\,\cdot\,}$.
\item We partially order $M^\circ$ componentwise relative to the basis $(f_i:i\in I)$.
\end{itemize}

\subsubsection*{Definitions related to indeterminates}
\begin{itemize}
\item Indeterminates $(z_i:i\in I)$.
\item $z^m$ means $\prod_{i\in I}z_i^{c_i}$ for $m=\sum_{i\in I}c_if_i\in M^\circ$.  
\item Laurent monomials $\sigma_i=\prod_{j\in I_\fr}z_j^{\epsilon_{ij}}$ and $\zeta_i=\prod_{j\in I}z_j^{\epsilon_{ij}}$ for $i\in I_\uf$.
\item $\zeta^n$ means $\prod_{i\in I_\uf}\zeta_i^{a_i}$ and $\sigma^n$ means $\prod_{i\in I_\uf}\sigma_i^{a_i}$ for $n=\sum_{i\in I_\uf}a_i e_i\in N_\uf$.
\item $\k[[\zeta]]=\k[[\zeta_i:i\in I]]$, the ring of formal power series in the $\zeta_i$, over a field~$\k$ of characteristic $0$.
\item Given $n\in N_\uf$, we write $nB$ for the vector in $M^\circ_\uf$ such that $\zeta^n=z^{nB}\sigma^n$.
In other words, we write the $e_i$-coordinate vector of $n$ as a row vector, apply the matrix $B=[\epsilon_{ij}]_{i,j\in I_\uf}$ on the right, and interpret the resulting row vector as the $f_i$-coordinate vector of $nB\in M^\circ_\uf$.
\end{itemize}

We now explain how \emph{all of the essential underlying data for a scattering diagram amounts to the choice of an extended exchange matrix.}

Given an indexing set $I$ and $I_\uf\subseteq I$, define $I_\fr$ as above.  
Let $\tB=[\epsilon_{ij}]_{i\in I_\uf,j\in I}$ be an integer matrix with linearly independent rows, and write $B$ for the square submatrix $[\epsilon_{ij}]_{i,j\in I_\uf}$.
We require that there exist integers $(d_i:i\in I_\uf)$ such that $d_i\epsilon_{ij}=-d_j\epsilon_{ji}$ for all $i,j\in I_\uf$.
We can assume that $\gcd(d_i:i\in I)=1$.
Thus $B$ is a skew-symmetrizable integer matrix---an \newword{exchange matrix}---and $\tB$ is an \newword{extended exchange matrix}, extending $B$.
(Contrary to the conventions in \cite{ca4}, here we have extended $B$ to make it \emph{wide} rather than \emph{tall}.
In \cite{afftheta}, we give the translations of this paper's results into the tall extended matrix setting.)
The definition of cluster scattering diagrams and theta functions below requires that the rows of $\tB$ be linearly independent, but we will see in Section~\ref{coeff sec} that there is a reasonable definition of theta functions for arbitrary $\tB$.

All of the underlying data described above can be constructed from $\tB$ as follows.
Let $(e_i:i\in I)$ be formal symbols, take $N$ to be the lattice of formal $\integers$-linear combinations of $\set{e_i:i\in I}$, and define $N^\circ$, $N_\uf$, $N^+$, and $V$ as above.

There is a choice to be made before constructing the remaining data.
Specifically, we must choose additional entries $\epsilon_{ij}$ with $i\in I_\fr$, $j\in I$ and additional integers $(d_i:i\in I_\fr)$ such that the additional entries $\epsilon_{ij}$ are integers when $j\in I_\uf$ and such that the matrix $[\epsilon_{ij}:i,j\in I]$ has $d_i\epsilon_{ij}=-d_j\epsilon_{ji}$ for all $i,j\in I$.
However, this choice is inconsequential for our purposes because the data $[\epsilon_{ij}]_{i\in I_\fr,j\in I}$ and $(d_i:i\in I_\fr)$ do not appear in the rest of the paper.
(One way to make the choice is to set $d_i=1$ for all $i\in I_\fr$, set $\epsilon_{ij}=-d_j\epsilon_{ji}$ for $i\in I_\fr$ and $j\in I_\uf$, and set $\epsilon_{ij}=0$ for $i,j\in I_\fr$.)
We emphasize again that \emph{all of the essential data is all contained in $\tB$}.

Define a bilinear form $\set{\,\cdot\,,\,\cdot\,}:N\times N\to \rationals$ according to the rule $\set{e_i,d_je_j}=\epsilon_{ij}$ for $i,j\in I$.
This is skew-symmetric because $d_i\epsilon_{ij}=-d_j\epsilon_{ji}$ for all $i,j\in I$.
The form has $\set{N_\uf,N^\circ}\subseteq\integers$ and $\set{N,N_\uf\cap N^\circ}\subseteq\integers$ because $\tB$ is an integer matrix.

Finally, construct $M$, $M^\circ$, and $V^*$ and define 
indeterminates and Laurent monomials as above.  
The requirement that the vectors $(\sum_{j\in I}\epsilon_{ij}f_j:i\in I_\uf)$ are linearly independent is precisely the linear independence of the rows of $\tB$.

The extended exchange matrix $\tB$ (and by extension, the underlying data for the cluster scattering diagram) is said to have \newword{principal coefficients} if there is a bijection $\pi:I_\uf\to I_\fr$ such that $\epsilon_{i\pi(j)}=\delta_{ij}$ (Kronecker delta).
Thus $\tB$ is $B$, extended by adjoining a permutation of an identity matrix.

\subsection{Scattering diagrams}\label{scat sec}
We assume the most basic notions of scattering diagrams from~\cite{GHKK}, and we follow the treatment in~\cite{scatfan}.
In particular, we leave out, from the scattering diagram, the extra dimensions related to frozen variables, and modify some definitions/constructions accordingly.
(See \cite[Remarks~2.12--2.13]{scatfan}.)

A \newword{wall} is a pair $(\d,f_\d)$, where $\d$ is a codimension-$1$ polyhedral cone in $V^*$ and~$f_\d$ is in $\k[[\zeta]]$.
More specifically, the cone $\d$ is orthogonal to a vector $n\in N^+_\uf$ that is primitive in~$N$, and the \newword{scattering term} $f_\d$ is a univariate power series in $\zeta^n$ with constant term~$1$.
We sometimes write $(\d,f_\d(\zeta^n))$ as a way of naming $n$ explicitly.
A \newword{scattering diagram} is a collection $\D$ of walls, satisfying a finiteness condition that amounts to the requirement that all relevant computations are valid as limits of formal power series.

The \newword{wall-crossing automorphism} $\p_{\bl,\d}$ for a path $\bl$ crossing a wall $(\d,f_\d(\zeta^n))$ acts on Laurent monomials in the $z_i$ by 
\begin{align}
\label{p def z}
\p_{\bl,\d}(z^m)&=z^m f_\d^{\br{m,\pm n^\circ}},
\end{align}
for $m\in M^\circ_\uf$, taking $+n^\circ$ when crossing $\d$ \emph{against} the direction of $n$ or taking $-n^\circ$ when crossing \emph{in} the direction of $n$.

As usual, \newword{path-ordered products} are compositions of wall-crossing automorphisms along generic paths, or more precisely, limits of such compositions in the sense of formal power series.
A scattering diagram $\D$ is \newword{consistent} if a path-ordered product depends only on the starting point and ending point of the path, and two scattering diagrams $\D$ and $\D'$ are \newword{equivalent} if they determine the same path-ordered products.
In this paper, we do not need to compute any path-ordered products explicitly,  
except in connection with the proof of Proposition~\ref{mut theta}, where we compute a single wall-crossing automorphism.  

A wall $(\d,f_\d(\zeta^n))$ is \newword{outgoing} if the vector $nB\in M^\circ_\uf$ is not contained in $\d$.
The \newword{cluster scattering diagram} $\Scat(\tB)$ is the unique (up to equivalence) consistent scattering diagram consisting of walls $\set{(e_i^\perp,1+\zeta_i):i\in I_\uf}$ together with additional walls, all of which are outgoing.
The existence and uniqueness of $\Scat(\tB)$ is \cite[Theorem~1.12]{GHKK}.

\subsection{Theta functions}\label{thet sec}
We now define theta functions, quoting and reinterpreting definitions and results of \cite{GHKK} in the special case of $\Scat(\tB)$.
Theta functions depend on the choice of a point $Q\in V^*$ and a vector $m\in M^\circ_\uf$, with $Q$ required to not be contained in any hyperplane $n^\perp$ for $n\in N_\uf$.
We have $\thet_{Q,0}=1$, and for $m\neq0$, $\thet_{Q,m}$ is a sum of weights on broken lines, as we now describe.

A \newword{broken line} for $m$ with endpoint $Q$, relative to $\Scat(\tB)$ is a piecewise linear path $\bl:(-\infty,0]\to V^*$ with finitely many of domains of linearity (abbreviated ``domains'' in what follows), satisfying certain conditions.
The first two conditions are directly on $\bl$:
\begin{enumerate}[label=\rm(\roman*), ref=(\roman*)]
\item \label{brok endpoint}
$\bl(0)=Q$.
\item \label{brok generic}
$\bl$ does not intersect the relative boundary of any wall of $\Scat(\tB)$ and does not intersect any intersection of walls of $\Scat(\tB)$ (unless those walls are in the same hyperplane). 
\end{enumerate}
The remaining conditions are phrased in terms of an assignment of a Laurent monomial $\const_Lz^{m_L}\sigma^{n_L}$ (with $\const_L\in\k$, $m_L\in M^\circ_\uf$, and $n_L\in N_\uf$) to each domain $L$ of~$\bl$.
The Laurent monomials must satisfy the following conditions, which amount to conditions on~$\bl$, and which allow us (if the conditions can be satisfied) to recover the Laurent monomials uniquely from the path $\bl$.
\begin{enumerate}[label=\rm(\roman*), ref=(\roman*)]
\addtocounter{enumi}{2}
\item \label{brok slope}
In each domain $L$, the derivative $\bl'$ of $\bl$ is constantly equal to $-m_L$.
\item \label{brok unbounded}
 $\const_Lz^{m_L}\sigma^{n_L}=z^m$ when $L$ is the unbounded domain of $\bl$.  (That is, $\const_L=1$, $m_L=m$, and $n_L=0$.)
\item \label{brok change slope}
Suppose $t$ is a point of nonlinearity of $\bl$, adjacent to domains $L_1$ and $L_2$, with~$L_1$ being of the form $[a,t]$ or $(-\infty,t]$ and $L_2$ being of the form $[t,b]$.
Then~$\bl(t)$ is required to be contained in some wall.
By condition \ref{brok generic}, above, there exists $n\in N_\uf$ such that every wall containing $\bl(t)$ is in $n^\perp$.
We can choose $n$ to be primitive in $N$ and to have ${\br{m_{L_1},n}>0}$.
Let $f$ be the product of the $f_\d$ for all walls $(\d,f_\d)$ with $\bl(t)\in\d$.
Then $\const_{L_2}z^{m_{L_2}}\sigma^{n_{L_2}}$ is required to be $\const_{L_1}z^{m_{L_1}}\sigma^{n_{L_1}}$ times a term in $f^{\br{m_{L_1},n^\circ}}$.
(We say that $\bl$ \newword{bends} at $t$, and 
$\frac{\const_{L_2}z^{m_{L_2}}\sigma^{n_{L_2}}}{\const_{L_1}z^{m_{L_1}}\sigma^{n_{L_1}}}$ is the \newword{contribution} at this bend.)
\end{enumerate}
Since each scattering term $f_\d$ is a formal power series in $\k[[\zeta]]$, these conditions imply that each $n_L$ is in $N_\uf^{0+}$, so that each $z^{m_L}\sigma^{n_L}$ is a Laurent monomial in $(z_i:i\in I_\uf)$ times an ordinary monomial in $(\sigma_i:i\in I_\uf)$, and in fact each $z^{m_L}\sigma^{n_L}$ is $z^m$ times an ordinary monomial in $(\zeta_i:i\in I_\uf)$.

Write $\const_\bl z^{m_\bl}\sigma^{n_\bl}$ for the Laurent monomial on the domain containing $0$.
Then the \newword{theta function} $\thet_{Q,m}$ is $\sum \const_\bl z^{m_\bl}\sigma^{n_\bl}$, where the sum is over broken lines for~$m$ with endpoint~$Q$.
We see that $\thet_{Q,m}\in z^m \k[[\zeta]]$.
(That is, $\thet_{Q,m}$ is $z^m$ times a formal power series in $(\zeta_i:i\in I_\uf)$.)

The most important theta functions, from the point of view of cluster algebras, are theta functions where $Q$ is chosen to be in the interior of the positive orthant $D=\bigcap_{i=1}^n\set{x\in V^*: \br{x,e_i}\ge 0}$.
The theta function does not depend on the exact choice of $Q$ in the interior of $D$, and thus we write $\thet_m$ to mean $\thet_{Q,m}$ with $Q$ in the interior of $D$.
Each theta function $\thet_m$ is $z^m$ times a formal power series $F_m\in\k[[\zeta]]$.
We will call $F_m$ the \newword{$F$-series} of $\thet_m$.
A~broken line $\bl$ for $m$ has $n_\bl=0$ if and only if $\bl$ has exactly one domain of linearity.
There is exactly one such broken line for each $m$ and $Q$, and it has $\const_\bl z^{m_\bl}\sigma^{n_\bl}=z^m$.
Thus $F_m$ has constant term $1$.

\begin{example}\label{theta g2 ex}
(To understand this example, one must be careful to distinguish between $n$ and $n^\circ$ in the definitions of walls and broken lines.)
Consider the exchange matrix $B=\begin{bsmallmatrix*}[r]\,\,0&\,\,-3\\1&\,\,0\end{bsmallmatrix*}$ and assume that $\tB$ is some extension of $B$.
We take $I_\uf=\set{1,2}$ indexing $B$ in the usual way and write $[m_1,m_2]$ for the $f_i$-coordinates of vectors in $M^\circ_\uf$.
The black lines in Figure~\ref{theta g2 fig} are the walls of the cluster scattering diagram.
The figure is drawn so that $f_1$ is the horizontal coordinate and $f_2$ is the vertical, and $f_1$ and $f_2$ are shown with the same length.
The skew-symmetrizing constants are $d_1=3$ and $d_2=1$, so $e_1^*=3f_1$ and $e_2^*=f_2$.
We have $\zeta_1=\sigma_1z_2^{-3}$ and $\zeta_2=\sigma_2z_1$
\begin{figure}[p]
\scalebox{0.85}{
\includegraphics{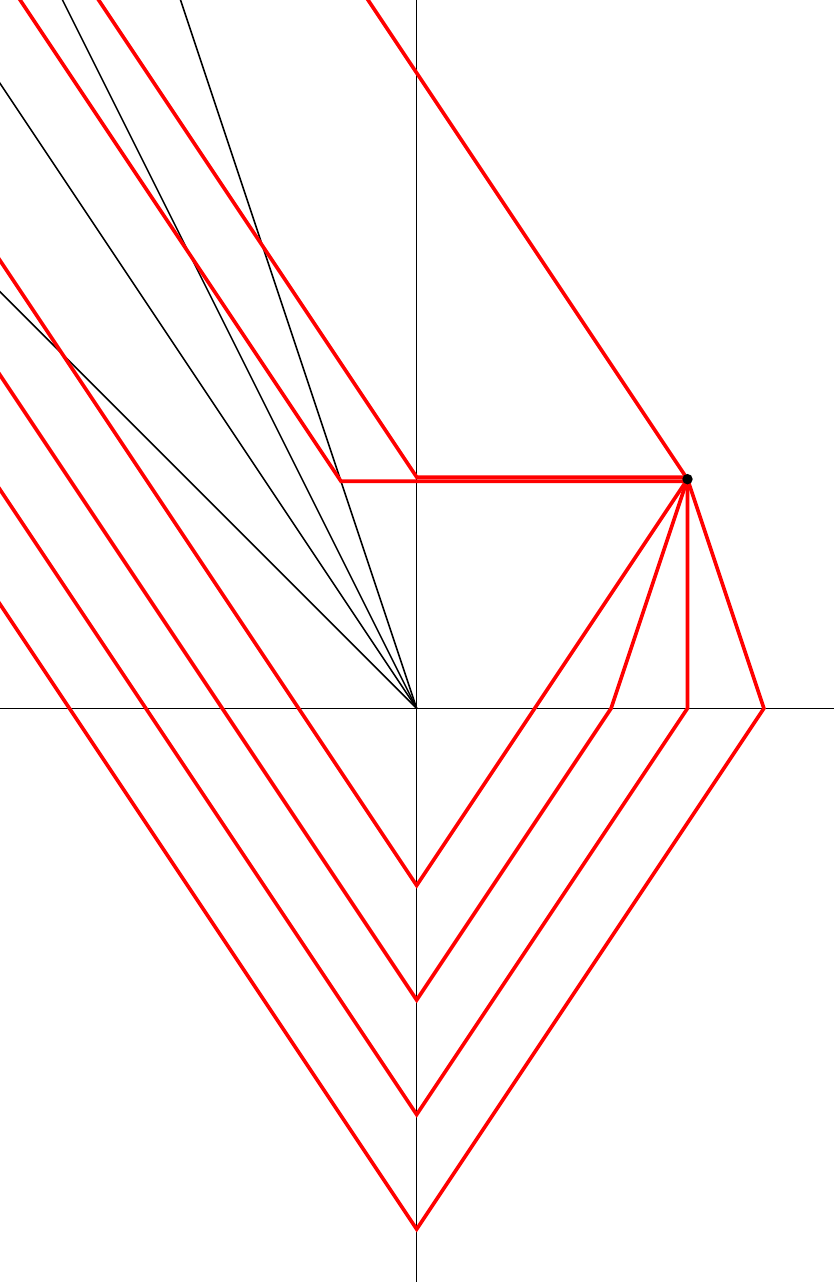}
\begin{picture}(0,0)(200,-275)
\put(-1,333){$1+\zeta_1$}
\put(-200,-10){$1+\zeta_2$}
\put(-1,-272){$1+\zeta_1$}
\put(168,-10){$1+\zeta_2$}
\put(100,158){\textcolor{red}{\rotatebox{-56}{$z_1^{-2}z_2^3$}}}
\put(40,116){\textcolor{red}{$2\sigma_1z_1^{-2}$}}
\put(44,8){\textcolor{red}{\rotatebox{56}{$\sigma_1^2z_1^{-2}z_2^{-3}$}}}
\put(78,4){\textcolor{red}{\rotatebox{71.5}{$3\sigma_1^2\sigma_2z_1^{-1}z_2^{-3}$}}}
\put(129,50){\textcolor{red}{\rotatebox{-90}{$3\sigma_1^2\sigma_2^2z_2^{-3}$}}}
\put(147,60){\textcolor{red}{\rotatebox{-71.5}{$\sigma_1^2\sigma_2^3z_1z_2^{-3}$}}}
\put(30,98){\textcolor{red}{$3\sigma_1\sigma_2z_1^{-1}$}}

\put(-42,176){\textcolor{red}{\rotatebox{-56}{$z_1^{-2}z_2^3$}}}
\put(-88,188){\textcolor{red}{\rotatebox{-56}{$z_1^{-2}z_2^3$}}}
\put(-202,219){\textcolor{red}{\rotatebox{-56}{$z_1^{-2}z_2^3$}}}
\put(-202,164){\textcolor{red}{\rotatebox{-56}{$z_1^{-2}z_2^3$}}}
\put(-202,109){\textcolor{red}{\rotatebox{-56}{$z_1^{-2}z_2^3$}}}
\put(-202,54){\textcolor{red}{\rotatebox{-56}{$z_1^{-2}z_2^3$}}}

\put(0,-111){\textcolor{red}{\rotatebox{56}{$\sigma_1^2z_1^{-2}z_2^{-3}$}}}
\put(0,-166){\textcolor{red}{\rotatebox{56}{$\sigma_1^2z_1^{-2}z_2^{-3}$}}}
\put(0,-221){\textcolor{red}{\rotatebox{56}{$\sigma_1^2z_1^{-2}z_2^{-3}$}}}

\put(-112,335){\rotatebox{-71.5}{$1+\zeta_1\zeta_2$}} 
\put(-167,335){\rotatebox{-63}{$1+\zeta_1^2\zeta_2^3$}} 
\put(-200,302){\rotatebox{-56}{$1+\zeta_1\zeta_2^2$}} 
\put(-150,153){\rotatebox{-45}{$1+\zeta_1\zeta_2^3$}} 

\put(130,114){$Q$}
\end{picture}
}
\caption{Broken lines for $\thet_{[-2,3]}=\thet_{Q,[-2,3]}$}\label{theta g2 fig}
\end{figure}
The figure also shows, in red, the broken lines that arise in the computation of $\thet_{[-2,3]}$, for a particular choice of $Q$ in the positive quadrant.
The theta function is the sum of the monomials assigned to the domains incident to $Q$:
\begin{align*}
\thet_{[-2,3]}&=\begin{multlined}[t][310 pt]
z_1^{-2}z_2^3+2\sigma_1z_1^{-2}+3\sigma_1\sigma_2z_1^{-1}+\sigma_1^2z_1^{-2}
z_2^{-3}\\
+3\sigma_1^2\sigma_2z_1^{-1}z_2^{-3}+3\sigma_1^2\sigma_2^2z_2^{-3}+\sigma_1^2\sigma_2^3z_1z_2^{-3}
\end{multlined}\\
&=z_1^{-2}z_2^3(1+2\zeta_1+3\zeta_1\zeta_2+\zeta_1^2+3\zeta_1^2\zeta_2+3\zeta_1^2\zeta_2^2+\zeta_1^2\zeta_2^3).
\end{align*}
For further details compare with Example~3.20 in \cite{scatcomb}.
\end{example}

\begin{remark}\label{useless dimensions}
As mentioned at the beginning of Section~\ref{scat sec}, we have followed \mbox{\cite[Section~2]{scatfan}} in leaving out some unnecessary dimensions from the definition of scattering diagrams and modifying the definition of theta functions accordingly.
This choice is explained and justified in \cite[Remark~2.1]{scatfan}, \cite[Remark~2.12]{scatfan}, and \cite[Remark~5.1]{scatfan}.  
Leaving out the extra dimensions amounts to ``demoting'' the ``frozen variables'' $(z_i:i\in I_\fr)$ to the status of ``coefficients''.
While the difference may be a matter of taste and convenience, we find that leaving out the extra dimensions leads to better intuition on what is actually happening, for example when we change coefficients.  
(See Section~\ref{coeff sec}.)

We now describe the difference more specifically.
For the purposes of this remark, temporarily define $V^*_\fr$ to be the real span of $\set{f_i:i\in I_\fr}$, so that $M\otimes \reals$ can be identified with $V^*\oplus V^*_\fr$.
Starting with $\Scat(\tB)$ in $V^*$ as defined here, we recover the scattering diagram in $M\otimes \reals$ (as in \cite{GHKK}) by replacing each wall $(\d,f_\d)$ of $\Scat(\tB)$ with $(\d\oplus V^*_\fr,f_\d)$.
Our definition constrains broken lines to $V^*$, but their ``directions'' in $V^*\oplus V^*_\fr$ are recorded on each domain by the Laurent monomials $z^{m_L}\sigma^{n_L}$.
Broken lines for $m\in M^\circ_\uf$ in our sense are easily seen to correspond bijectively to broken lines in the larger space in the sense of \cite{GHKK}, and the Laurent monomials labeling domains are the same.
\end{remark}

\begin{remark}\label{useless theta functions}
In \cite{GHKK}, the definition of theta functions allows $m\in M^\circ$, rather than the more restrictive condition $m\in M^\circ_\uf$ in our definition.
However, if we allow $m\in M^\circ$ in our definition and adjust Condition~\ref{brok unbounded} in the obvious way, we obtain the same theta functions as in \cite{GHKK}.
In either definition, the only interesting theta functions are the theta functions for $m\in M^\circ_\uf$.
Indeed, if $m\in M^\circ$ is written as $m_0+m_1$ with $m_0\in M^\circ_\uf$ and $m_1$ in the span of $\set{f_i:i\in I_\fr}$, then $\thet_m=\thet_{m_0}z^{m_1}$.
Thus we restrict our attention to theta functions $\thet_m$  with $m\in M^\circ_\uf$.
See also Section~\ref{GHKK compare}.
\end{remark}

\subsection{Structure constants}\label{struct sec}
We now explain a result of \cite{GHKK} that gives the structure constants for multiplication of theta functions $\thet_m$, reinterpreting this result in our setting where unnecessary dimensions have been removed.
Specifically, we quote a result of \cite{GHKK} that is stated there under the hypothesis of principal coefficients, as defined in Section~\ref{context sec}.
In Section~\ref{coeff sec}, we will show (as Proposition~\ref{struct plus}) that the result quoted in this section holds without the hypothesis of principal coefficients.

Suppose $p_1,p_2,m\in M^\circ_\uf$ and suppose $Q\in V^*$ is not contained in any wall of $\Scat(\tB)$.
Define
\[a_Q(p_1,p_2,m)=\sum_{(\bl_1,\bl_2)}\const_{\bl_1}\const_{\bl_2} \sigma^{n_{\bl_1}+n_{\bl_2}},\]
where the sum is over all pairs $(\bl_1,\bl_2)$ of broken lines for $p_1$ and $p_2$ respectively, both having endpoint $Q$, and with $m_{\bl_1}+m_{\bl_2}=m$.
The definition of broken lines implies that each $n_{\bl_i}$ is a nonnegative combination of $\set{e_i:i\in I_\uf}$.
Each $\sigma^{n_{\bl_1}+n_{\bl_2}}$ is a Laurent monomial in $(z_i:i\in I_\fr)$.
By \cite[Definition-Lemma~6.2]{GHKK}, each Laurent monomial in $(z_i:i\in I_\fr)$ appears at most finitely many times, and with nonnegative coefficients, in the sum.
Therefore, each monomial in $(\sigma_i:i\in I_\uf)$ appears at most finitely many times and with nonnegative coefficients.
In particular, $a_Q(p_1,p_2,m)$ is well defined as a formal power series in $(\sigma_i:i\in I_\uf)$ with nonnegative integer coefficients.
For an arbitrary sequence of points $Q\in V^*$, disjoint from the walls of $\Scat(\tB)$ and approaching $m$, define
\[a(p_1,p_2,m)=\lim_{Q\to m}a_Q(p_1,p_2,m).\]
This limit is valid as a limit of formal power series, by the finiteness condition in the definition of a scattering diagram.
The following is part of \cite[Proposition~6.4]{GHKK} specialized to the present setting.

\begin{proposition}\label{struct}
Assume principal coefficients and suppose $p_1,p_2\in M^\circ_\uf$.
Then for every $m\in M^\circ_\uf$, the formal power series $a(p_1,p_2,m)$ does not depend on the sequence of points $Q$ approaching $m$.
Furthermore,
\begin{equation}\label{struct eq}
\thet_{p_1}\cdot\thet_{p_2}=\sum_{m\in M^\circ_\uf}a(p_1,p_2,m)\,\thet_m.
\end{equation}
\end{proposition}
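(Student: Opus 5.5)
This proposition is a translation of \cite[Proposition~6.4]{GHKK} into the present setting, where the frozen directions have been removed from the scattering diagram. The plan is therefore to use the dictionary of Remarks~\ref{useless dimensions} and~\ref{useless theta functions}, not to reprove the GHKK result.

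First, I would pass to the GHKK scattering diagram in $M\otimes\reals=V^*\oplus V^*_\fr$. Each wall $(\d,f_\d)$ of $\Scat(\tB)$ is replaced by $(\d\oplus V^*_\fr,f_\d)$. By Remark~\ref{useless dimensions}, broken lines in $V^*$ for $m\in M^\circ_\uf$ with endpoint $Q$ correspond bijectively to GHKK broken lines with endpoint $(Q,0)$. Corresponding broken lines carry identical Laurent monomials on corresponding domains: the label $\const_Lz^{m_L}\sigma^{n_L}$ is exactly the GHKK monomial, and its full exponent $m_L+(\text{frozen part of }n_L\text{ via }\sigma)$ records the direction in the larger space. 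Consequently $\thet_{Q,m}$ agrees with the GHKK theta function, and under principal coefficients the GHKK hypotheses hold.

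Second, I would compare structure constants. GHKK define $\alpha(p_1,p_2,q)$ for $q\in M^\circ$ as the sum of $\const_{\bl_1}\const_{\bl_2}$ over pairs of broken lines ending near $q$ whose final exponents sum to $q$. They prove independence of the endpoint and $\thet_{p_1}\thet_{p_2}=\sum_q\alpha(p_1,p_2,q)\thet_q$.

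Write $q=m+m_1$ with $m\in M^\circ_\uf$ and $m_1$ frozen. The final exponent of a pair is $m_{\bl_1}+m_{\bl_2}$ plus the frozen exponent of $\sigma^{n_{\bl_1}+n_{\bl_2}}$. Under principal coefficients, $n\mapsto$ (frozen exponent of $\sigma^n$) is injective, so $m_1$ determines $n_{\bl_1}+n_{\bl_2}$.

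Because the walls are invariant under translation in $V^*_\fr$, a GHKK endpoint approaching $q$ may be replaced by one approaching $(m,0)$ without changing $\alpha$. Moreover, $\thet_q=\thet_m z^{m_1}$ by Remark~\ref{useless theta functions}. Grouping terms by $m$ gives
\[\sum_{m_1}\alpha(p_1,p_2,m+m_1)z^{m_1}=a(p_1,p_2,m),\]
which yields \eqref{struct eq}. Independence of $a(p_1,p_2,m)$ from the sequence of points $Q$ then follows coefficientwise from the independence of each $\alpha$.

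The main obstacle is this bookkeeping of limits. GHKK take a generic endpoint near $q$ in the full space, whereas we take $Q\to m$ inside $V^*$. The key step is to verify that $V^*_\fr$-translation invariance makes these limits agree. It also requires checking that the finiteness statement of \cite[Definition-Lemma~6.2]{GHKK} justifies the interchange of the limit with the formal sum over $m_1$.
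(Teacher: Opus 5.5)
Your proposal is correct and takes the same approach as the paper. The paper gives no independent proof; it quotes \cite[Proposition~6.4]{GHKK}, reinterpreted through Remarks~\ref{useless dimensions} and~\ref{useless theta functions}. Your steps are the bookkeeping that reinterpretation leaves implicit: lifting broken lines to $V^*\oplus V^*_\fr$, using translation invariance of the walls in the frozen directions, and regrouping $\sum_{m_1}\alpha(p_1,p_2,m+m_1)z^{m_1}$ into $a(p_1,p_2,m)$ via injectivity of the frozen exponent of $\sigma^n$.
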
  
The sum in Proposition~\ref{struct} may not reduce to a finite sum, but \cite[Definition-Lemma~6.2]{GHKK} and \cite[Proposition~6.4]{GHKK} imply that it makes sense as convergent sum of formal power series.
More precisely, for each term $c\sigma^n$ of $a(p_1,p_2,m)$, the expression $c\sigma^n\thet_m$ can be rewritten as $c\zeta^nz^{-nB}z^mF_m$.
Since $\zeta^n$ is the product of the contributions of all bends of some pair of broken lines for $p_1$ and $p_2$, we have $m-nB=p_1+p_2$.
Thus $c\sigma^n\thet_m$ is $z^{p_1+p_2}$ times a formal power series $c\zeta^nF_m\in\k[[\zeta]]$.
Proposition~\ref{struct} (i.e.\ \cite[Proposition~6.4]{GHKK}) says that the sum of these formal power series converges to $z^{-p_1-p_2}\thet_{p_1}\cdot\thet_{p_2}$ so that the sum in \eqref{struct eq} converges to $\thet_{p_1}\cdot\thet_{p_2}$.

Roughly following \cite[Section~7]{GHKK}, we define $\Theta\subseteq M^\circ_\uf$ to be the set of vectors $m\in M^\circ_\uf$ such that only finitely many broken lines figure into the definition of $\thet_m$.
In particular, for $m\in\Theta$, in the theta function $\thet_m=z^m\cdot F_m$, the $F$-series $F_m$ is a polynomial (the \newword{$F$-polynomial}), rather than a more general formal power series.

The following is part of \cite[Theorem~7.5]{GHKK}, rephrased to use the conventions of this paper.
\begin{theorem}\label{Theta facts}  
If $p_1,p_2\in\Theta$ then the sum in \eqref{struct eq} has finitely many nonzero terms, every $a(p_1,p_2,m)$ is a polynomial, and every $m$ with $a(p_1,p_2,m)\neq0$ has $m\in\Theta$.
\end{theorem}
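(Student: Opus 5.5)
Theorem~\ref{Theta facts} is quoted from \cite[Theorem~7.5]{GHKK}, so the plan is to translate the GHKK statement into the present conventions rather than reprove it. The setting here differs from \cite{GHKK} in two ways. First, the extra dimensions spanned by $\set{f_i:i\in I_\fr}$ are omitted (Remark~\ref{useless dimensions}). Second, theta functions are restricted to $m\in M^\circ_\uf$ (Remark~\ref{useless theta functions}). I would first record a dictionary between broken lines in $V^*$ and broken lines in $M\otimes\reals=V^*\oplus V^*_\fr$ for the scattering diagram obtained by thickening each wall $(\d,f_\d)$ to $(\d\oplus V^*_\fr,f_\d)$. By Remark~\ref{useless dimensions}, these correspond bijectively, with the same Laurent monomials on corresponding domains. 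Hence $m\in\Theta$ in our sense if and only if $m$ lies in GHKK's $\Theta$, where their $\Theta$ is taken inside $M^\circ$ and intersected with $M^\circ_\uf$.

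Second, I would match the structure constants. GHKK's constants $\alpha(p_1,p_2,q)$ for $q\in M^\circ$ are defined by the same pair-of-broken-lines count, but they record the full endpoint exponent $q=m_{\bl_1}+m_{\bl_2}+(\text{frozen part})$. Under the dictionary, the frozen part of the final exponent of a broken line is exactly the exponent of $\sigma^{n_\bl}$. So grouping GHKK's terms by the $M^\circ_\uf$-component $m$ gives
\[a(p_1,p_2,m)=\sum_{q}\alpha(p_1,p_2,q)\,z^{q-m}.\]
Here $q$ ranges over the lifts of $m$ with frozen part $q-m$. By Remark~\ref{useless theta functions} we have $\thet_q=\thet_m z^{q-m}$, so the GHKK expansion of $\thet_{p_1}\thet_{p_2}$ regroups into \eqref{struct eq}.

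Third, I would transfer the conclusions. GHKK's Theorem~7.5 says that for $p_1,p_2\in\Theta$, only finitely many $\alpha(p_1,p_2,q)$ are nonzero, and every such $q$ lies in $\Theta$. From this, each $a(p_1,p_2,m)$ is a finite sum of monomials, hence a polynomial, and only finitely many $m$ occur. Each occurring $m$ is the unfrozen part of some $q\in\Theta$. Since $\thet_q=\thet_m z^{q-m}$ involves the same broken lines as $\thet_m$, we get $m\in\Theta$.

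The main obstacle is the hypotheses. \cite[Theorem~7.5]{GHKK} is stated for their setup: principal coefficients, or more generally the assumption of linearly independent rows of $\tB$ used in their Section~7. I must check that the $\tB$ here satisfies whatever hypothesis they use, or else first reduce to principal coefficients. If a reduction is needed, I would use the comparison of Section~\ref{coeff sec} (Proposition~\ref{struct plus}), which specializes the $\sigma$'s: broken lines in $V^*$ do not depend on the coefficients, so finiteness of the set of broken lines, and hence membership in $\Theta$, is coefficient-independent. Likewise, polynomiality of $a(p_1,p_2,m)$ survives the substitution of monomials for the $\sigma_i$. Everything else is bookkeeping.
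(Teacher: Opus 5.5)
Your proposal matches the paper, which gives no independent argument and simply quotes \cite[Theorem~7.5]{GHKK} ``rephrased to use the conventions of this paper.'' Your translation supplies the bookkeeping that this rephrasing relies on: the broken-line dictionary of Remarks~\ref{useless dimensions} and~\ref{useless theta functions}, regrouping the GHKK constants by their $M^\circ_\uf$-component, and the coefficient-independence of $\Theta$ (Proposition~\ref{Theta B}) together with the substitution of Section~\ref{coeff sec} to move away from principal coefficients.
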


\subsection{Matrix mutation, mutation maps, and the mutation fan}\label{mat mut sec}
Of primary importance in this paper is the notion of mutation.
For each $k\in I_\uf$, the \newword{mutation} of $\tB$ in direction~$k$ is the matrix $\mu_k(\tB)=[\epsilon'_{ij}]$ given by
\begin{equation}\label{b mut}
\epsilon_{ij}'=\left\lbrace\!\!\begin{array}{ll}
-\epsilon_{ij}&\mbox{if }i=k\mbox{ or }j=k;\\
\epsilon_{ij}+[-\epsilon_{ik}]_+\epsilon_{kj}+\epsilon_{ik}[\epsilon_{kj}]_+&\mbox{otherwise.}
\end{array}\right.
\end{equation}
Here, $[x]_+$ means $\max(0,x)$.  
Mutation of the smaller matrix $B$ is defined by the same formula.

The notation $\kk=k_q\cdots k_1$ stands for a sequence of indices in $I_\uf$, so that $\mu_\kk$ means $\mu_{k_q}\circ\mu_{k_{q-1}}\circ\cdots\circ\mu_{k_1}$.

Given an exchange matrix~$B$, there is a \newword{mutation map} $\eta_\kk^B:V^*\to V^*$ for each sequence $\kk$ of indices.
To compute $\eta_\kk^B(v)$ for $v=\sum_{i\in I_\uf}a_if_i\in V^*$, we extend $B$ by appending a single row $(a_i:i\in I_\uf)$ below $B$, apply $\mu_\kk$, and read off the row $(a'_i:i\in I_\uf)$ below $\mu_\kk(B)$ in the mutated matrix.
Then $\eta_\kk^B(v)$ is defined to be $\sum_{i\in I_\uf}a'_if_i\in V^*$.
The map $\eta_\kk^B:V^*\to V^*$ is a piecewise linear homeomorphism.
It is a composition 
\begin{equation}\label{eta comp}
\eta_\kk^B=\eta^B_{k_q,k_{q-1}\ldots,k_1}=\eta_{k_q}^{B_{q}}\circ\eta_{k_{q-1}}^{B_{q-1}}\circ\cdots\circ\eta_{k_1}^{B_1}
\end{equation}
of mutation maps for singleton sequences, where $B_1=B$ and $B_{i+1}=\mu_{k_i}(B_i)$ for $i=1,\ldots,q$.

For $v=\sum_{i\in I_\uf}a_if_i$, we write $\vsgn(v)$ for the vector $(\sgn(a_i):i\in I_\uf)$, where $\sgn(a)\in\set{-1,0,1}$ is the sign of $a$ as usual.
Two vectors $m,p\in V^*$ are \newword{$B$-equivalent} (written $m\equiv^Bp$) if and only if $\vsgn(\eta^B_\kk(m))=\vsgn(\eta^B_\kk(p))$ for all sequences $\kk$.
The $\equiv^B$-equivalence classes are called \newword{$B$-classes} and the closures of $B$-classes are called \newword{$B$-cones}.
Each $B$-cone is a closed convex cone (i.e.\ it is closed in the usual sense and closed under nonnegative scaling and addition).
The set consisting of all $B$-cones and their faces is a complete fan called the \newword{mutation fan}~$\F_{B}$ \cite[Theorem~5.13]{universal}.

\subsection{The choice of coefficients}\label{coeff sec}
We take the point of view that the exchange matrix $B$ is the most important initial data for a cluster scattering diagram, while the remaining entries of $\tB$ are a choice of ``coefficients''.
We now discuss to what extent, and in what way, the cluster scattering diagram, its theta functions, and their structure constants depend on the choice of coefficients.
This discussion leads us to introduce the hypothesis on coefficients that is needed for our main results.
In the other direction, this discussion leads us to define some theta functions in greater generality by relaxing the requirement that $\tB$ has linearly independent rows.
We describe in Proposition~\ref{struct plus plus} how results on structure constants for multiplying theta functions (e.g.\ those in Sections~\ref{sym sec} and~\ref{dom sec}) can apply to this more general definition of theta functions.

If $\tB$ and $\tB'$ both extend $B$ (and both have linearly independent rows), then one can construct the underlying data as in Section~\ref{context sec} for both $\tB$ and $\tB'$ with the same $(e_i:i\in I_\uf)$ both times, so that the lattices $N_\uf$ and $M^\circ_\uf$ and the vector spaces $V$ and $V^*$ are the same for $\tB$ and $\tB'$.
Distinguishing the indeterminates and Laurent monomials by placing primes on the indeterminates and Laurent monomials for $\tB'$, \mbox{\cite[Proposition~2.6]{scatfan}} is the statement that a consistent scattering diagram for $\tB$ can be made into a consistent scattering diagram for $\tB'$ by replacing each wall $(\d,f_\d(\zeta^{n_0}))$ by $(\d,f_\d((\zeta')^{n_0}))$, where $f_\d((\zeta')^{n_0})$ denotes the formal power series in $(\zeta')^{n_0}$ obtained from $f_\d(\zeta^{n_0})$ by replacing $\zeta^{n_0}$ with $(\zeta')^{n_0}$ throughout.
As an immediate consequence, the cluster scattering diagram $\Scat(\tB')$ is obtained from $\Scat(\tB)$ by making the same replacements of walls.

Now, looking at the definition of broken lines, we might try to conclude that the theta functions for $\Scat(\tB')$ are obtained from the theta functions for $\Scat(\tB)$ by replacing $\zeta_i$ by $\zeta'_i$ for each $i\in I_\uf$.
We might also try to conclude that the structure constants (given in Proposition~\ref{struct}/Proposition~\ref{struct plus}) for multiplying theta functions for $\Scat(\tB')$ are obtained from the structure constants for $\Scat(\tB)$ by replacing each $\sigma_i$ by $\sigma'_i$.
However, there is a subtlety in the notion of ``replacing each $\zeta_i$ (or $\sigma_i$) by $\zeta'_i$ (or~$\sigma'_i$)'' that must be addressed.
This subtlety leads to our hypothesis on coefficients.

As mentioned just after Proposition~\ref{struct}, the definition via broken lines describes a theta function $\thet_m$ as $z^m$ times a formal power series in $(\zeta_i:i\in I_\uf)$.
Taking that expression, we can replace each $\zeta_i$ by $\zeta'_i$ to obtain the corresponding theta function for $\Scat(\tB')$.
However, the Laurent monomials $(\sigma_i:i\in I_\uf)$ may not be linearly independent in the lattice of Laurent monomials.
If they are not, then, when we write $\thet_m$ as a sum of Laurent monomials in $(z_i:i\in I)$, we may not be able to look at a single Laurent monomial $x$ and write it uniquely as $z^m$ times a monomial in $(\zeta_i:i\in I_\uf)$.
(Nevertheless, if we know $m$, then since $\zeta_i=z^{\sum_{j\in I}\epsilon_{ij}f_j}$ and the vectors $\sum_{j\in I}\epsilon_{ij}f_j$ are linearly independent, we can divide $x$ by $z^m$ and uniquely write the result as a monomial in $(\zeta_i:i\in I_\uf)$.)

The situation is even worse for structure constants.
Each $a(p_1,p_2,m)$ is a formal power series in $(\sigma_i:i\in I_\uf)$, and if we obtain a structure constant $a(p_1,p_2,m)$ for $\Scat(\tB)$ in this form as in Proposition~\ref{struct}, we can replace each $\sigma_i$ by $\sigma_i'$ to obtain the structure constants for theta functions for $\Scat(\tB')$.
However, if we have $a(p_1,p_2,m)$ as a sum of Laurent monomials in $(z_i:i\in I)$ and if the Laurent monomials $(\sigma_i:i\in I_\uf)$ are not linearly independent, then we cannot uniquely recover monomials in $(\sigma_i:i\in I_\uf)$ and thus cannot ``replace each $\sigma_i$ by $\sigma'_i$'' to find the corresponding structure constant for $\Scat(\tB')$.

These considerations motivate an additional condition on $\tB$.
We say that $\tB$ has \newword{nondegenerate coefficients} if, when $B$ is deleted from $\tB$,  what remains has linearly independent rows.
Equivalently, if $\sigma^n=1$ for some $n\in N_\uf$, then $n=0$.
This is stronger than requiring that the rows of $\tB$ are linearly independent.

Defining Laurent monomials $\zeta_i$ and $\zeta'_i$ for $i\in I_\uf$ as before, and appealing as suggested above to \cite[Proposition~2.6]{scatfan} and to the definition of theta functions, the replacements indicated in the following proposition are well defined, because of the hypothesis of nondegenerate coefficients.

\begin{proposition}\label{where prin}
Suppose $\tB$ and $\tB'$ are extensions of $B$, both with linearly independent rows, and suppose that $\tB$ has nondegenerate coefficients.
Then each theta function $\thet'_m$ defined in terms of $\Scat(\tB')$ is obtained from the theta function $\thet_m$ defined in terms of $\Scat(\tB)$ by replacing each $\zeta_i$ by $\zeta'_i$ (or equivalently by replacing each $\sigma_i$ by $\sigma'_i$).
\end{proposition}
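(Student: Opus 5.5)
The proof is a direct comparison of broken lines for $\Scat(\tB)$ and $\Scat(\tB')$. Both diagrams live in the same $V^*$ with the same walls as cones, by \cite[Proposition~2.6]{scatfan}. The only change is that each scattering term $f_\d(\zeta^{n_0})$ becomes $f_\d((\zeta')^{n_0})$. I would build an explicit bijection between broken lines for $m$ with endpoint $Q$ relative to $\Scat(\tB)$ and those relative to $\Scat(\tB')$, taking each path $\bl$ to itself. I would then show that the monomial on each domain transforms by $\zeta\mapsto\zeta'$.

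The first step is to record the labels in the right coordinates. By conditions \ref{brok unbounded} and \ref{brok change slope}, for a broken line $\bl$ each domain label has the form $\const_Lz^{m_L}\sigma^{n_L}=\const_L z^m\zeta^{n_L}$, with $n_L\in N^{0+}_\uf$ and $m_L=m+n_LB$. So $\bl$ carries the data $(\const_L,n_L)_L$. Its slopes $-m_L=-(m+n_LB)$ depend only on $B$ and $n_L$, not on the coefficient rows.

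Next I check that a path satisfying \ref{brok endpoint}--\ref{brok change slope} for $\tB$, with data $(\const_L,n_L)$, satisfies them for $\tB'$ with the same data, and conversely.
\begin{itemize}
\item Conditions \ref{brok endpoint}--\ref{brok slope} involve only the path, the wall supports and $m_L$, all of which are common to both diagrams.
\item For \ref{brok change slope} at a bend, the relevant $n$, the exponent $\br{m_{L_1},n^\circ}$ and the set of walls through $\bl(t)$ are common. The product $f$ is a power series in $\zeta^{n}$ whose coefficients are unchanged when passing to $\zeta'$.
\item The contribution $\frac{\const_{L_2}}{\const_{L_1}}\zeta^{n_{L_2}-n_{L_1}}$ is a term of $f^{\br{m_{L_1},n^\circ}}$ exactly when $\frac{\const_{L_2}}{\const_{L_1}}(\zeta')^{n_{L_2}-n_{L_1}}$ is the corresponding term of $f'^{\br{m_{L_1},n^\circ}}$.
\end{itemize}
The correspondence requires reading terms of a power series in $\zeta^n$ as pairs (coefficient, exponent) rather than as Laurent monomials in the $z_i$. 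That reading is legitimate because $\zeta\mapsto z^{nB}\sigma^n$ is injective on $N_\uf$, by linear independence of the rows of $\tB$ and of $\tB'$.

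Summing over broken lines then gives $\thet_m=z^m\sum_\bl \const_\bl\zeta^{n_\bl}$ and $\thet'_m=z^m\sum_\bl\const_\bl(\zeta')^{n_\bl}$, indexed by the same set. Since the $z^m$ factor is common and $\zeta^n=z^{nB}\sigma^n$, replacing $\zeta_i$ by $\zeta'_i$ is the same as replacing $\sigma_i$ by $\sigma'_i$.

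The main obstacle is well-definedness, which is where nondegenerate coefficients for $\tB$ enter. The statement asks us to start from $\thet_m$ as an element of the ambient ring, a sum of Laurent monomials in $(z_i:i\in I)$, and perform the replacement $\sigma_i\mapsto\sigma'_i$. Given $\thet_m$ this way, we must recover uniquely the expression $z^m\sum_n c_n\zeta^n$, or equivalently $\sum_n c_n z^{m+nB}\sigma^n$, with $c_n$ the aggregated coefficient of $\zeta^n$. Under nondegenerate coefficients, $n\mapsto\sigma^n$ is injective, so the frozen part of each Laurent monomial determines $n$ uniquely. Hence the replacement $\sigma^n\mapsto(\sigma')^n$ is a well-defined operation on $\thet_m$ that does not depend on how monomials were grouped. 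Combined with the bijection above, this finishes the proof.
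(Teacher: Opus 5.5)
Your proposal is correct and follows the same route the paper indicates: use \cite[Proposition~2.6]{scatfan} to see that $\Scat(\tB')$ has the same walls as $\Scat(\tB)$ with scattering terms changed by $\zeta^{n_0}\mapsto(\zeta')^{n_0}$, then read off from the definition of broken lines that the same paths are broken lines for both diagrams, with labels related by $\zeta\mapsto\zeta'$. As in the paper, nondegenerate coefficients of $\tB$ are what make the replacement a well-defined operation on $\thet_m$ viewed as a series of Laurent monomials; the broken-line bijection, which the paper leaves implicit, is simply written out in your version.
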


\begin{remark}\label{not afoul}
Readers familiar with the ``separation of additions'' formula \cite[Corollary~6.3]{ca4} may be suspicious of Proposition~\ref{where prin}.
The source of confusion is that, under our conventions, theta functions ``are'' cluster monomials only in the case of principal coefficients.
For other choices of coefficients, a cluster monomial may be a theta function times a monomial in the frozen variables.
For details, see \cite[Theorem~5.2]{scatfan} and \cite[Remark~5.3]{scatfan}.
\end{remark}

We pause to point out another direct consequence of \cite[Proposition~2.6]{scatfan} and the definition of broken lines.

\begin{proposition}\label{Theta B}
The set $\Theta$ of vectors $m\in M^\circ_\uf$ such that only finitely many broken lines figure into the definition of $\thet_m$ depends only on $B$, not on the choice of an extension $\tB$ with linearly independent rows.
\end{proposition}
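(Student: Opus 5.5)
The plan is to build an explicit bijection between the broken lines for $m$ with endpoint $Q$ relative to $\Scat(\tB)$ and those relative to $\Scat(\tB')$. Once we have it, finiteness of one set is equivalent to finiteness of the other, and this holds for every $m\in M^\circ_\uf$. Since $\Theta$ is then the same for any two extensions $\tB$, $\tB'$ of $B$ with linearly independent rows, it depends only on $B$. Throughout we use the identification from the discussion before Proposition~\ref{where prin}: $N_\uf$, $M^\circ_\uf$, $V$ and $V^*$ are literally the same for $\tB$ and $\tB'$.

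First, by \cite[Proposition~2.6]{scatfan}, $\Scat(\tB')$ is obtained from $\Scat(\tB)$ by replacing each wall $(\d,f_\d(\zeta^{n_0}))$ with $(\d,f_\d((\zeta')^{n_0}))$. So the two diagrams have the same supports. They also have the same normal vectors $n_0$, and the same coefficient sequences in the univariate power series. Conditions \ref{brok endpoint} and \ref{brok generic} on a path $\bl$ therefore refer only to the supports, so they hold for $\tB$ exactly when they hold for $\tB'$.

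Next, for the labeling conditions I would track each label not as a Laurent monomial in the $z_i$ but as a pair consisting of a coefficient $\const_L$ and an exponent $n_L\in N^{0+}_\uf$. The label is $\const_Lz^m\zeta^{n_L}$, or $\const_Lz^m(\zeta')^{n_L}$ for $\tB'$, and $m_L=m-n_LB$ is determined by $n_L$ and $B$ alone. Condition \ref{brok slope} involves only $m_L$. Condition \ref{brok unbounded} says $n_L=0$ and $\const_L=1$. In condition \ref{brok change slope}, the exponent $\br{m_{L_1},n^\circ}$ depends only on $B$ and on the common wall normals. The choice of a term of $f^{\br{m_{L_1},n^\circ}}$ amounts to a choice of a power $k$ of $\zeta^{n}$ (respectively $(\zeta')^{n}$) together with the same numerical coefficient in both cases. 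The update $n_{L_2}=n_{L_1}+kn$, $\const_{L_2}=\const_{L_1}\cdot(\text{coefficient})$ is therefore identical for $\tB$ and $\tB'$.

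Hence, with the labels recorded as $(\const_L,n_L)$, the same path $\bl$ carrying the same data is a broken line for $\tB$ if and only if it is one for $\tB'$. The one point needing care, which I expect to be the only real obstacle, is that a broken line is a path \emph{together with} its labels, and the labels are recovered from the path. In particular, distinct choices of terms at a bend give distinct $n_{L_2}$, so distinct broken lines. We must ensure this bookkeeping does not depend on whether the $\sigma_i$ are independent. That is fine: the $n_L$ are recorded directly by the recursion rather than read off from Laurent monomials in the $z_i$, and the bend choices correspond to distinct powers $k$ in both settings. So the correspondence is a bijection of broken lines, not merely of monomial contributions. In particular the number of broken lines for $m$ with endpoint $Q$, with $Q$ in the interior of $D$, is the same for $\tB$ and $\tB'$, and the proposition follows.
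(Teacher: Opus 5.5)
Your proposal is correct and is essentially the paper's argument. The paper simply notes that the proposition is a direct consequence of \cite[Proposition~2.6]{scatfan}, which gives the same supports, normals and univariate coefficient sequences for $\Scat(\tB)$ and $\Scat(\tB')$, together with the definition of broken lines; your bookkeeping via pairs $(\const_L,n_L)$ just spells this out. One harmless slip: with the paper's convention $\zeta^n=z^{nB}\sigma^n$, the label $\const_Lz^m\zeta^{n_L}$ gives $m_L=m+n_LB$ rather than $m-n_LB$, which does not affect the argument.
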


As an immediate consequence of Proposition~\ref{where prin}, we can now keep the promise that we made earlier to weaken the hypotheses of Proposition~\ref{struct}, which was proved in \cite{GHKK} under the hypothesis of principal coefficients.
Since principal coefficients are in particular nondegenerate, Proposition~\ref{where prin} applies when $\tB$ has principal coefficients.
Since theta functions for $\tB'$ can be obtained from theta functions for $\tB$ by a well-defined substitution, the same is true for structure constants.

\begin{proposition}\label{struct plus}
Proposition~\ref{struct} holds, more generally, without the hypothesis of principal coefficients (but retaining the hypothesis that the rows of $\tB$ are linearly independent). 
\end{proposition}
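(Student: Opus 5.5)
Let $\tB'$ be an arbitrary extension of $B$ with linearly independent rows. Choose a principal-coefficient extension $\tB$ of $B$, for instance $B$ with an identity matrix adjoined on the right, with frozen index set in bijection with $I_\uf$. Then $\tB$ has linearly independent rows and nondegenerate coefficients, since the $\sigma_i$ are distinct frozen variables. So Proposition~\ref{struct} applies to $\tB$, and Proposition~\ref{where prin} applies to the pair $(\tB,\tB')$. Throughout, the lattices $N_\uf$, $M^\circ_\uf$ and the space $V^*$ are the same for both matrices, as in the discussion before Proposition~\ref{where prin}.

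\textbf{Step 1: structure constants.} By \cite[Proposition~2.6]{scatfan}, $\Scat(\tB')$ is obtained from $\Scat(\tB)$ by replacing each scattering term $f_\d(\zeta^{n_0})$ with $f_\d((\zeta')^{n_0})$. The walls themselves, as cones in $V^*$, are unchanged. Broken lines are therefore the same paths for both diagrams. At a bend, a term of $f^{\br{m_{L_1},n^\circ}}$ for $\tB$ has the form $c\,\zeta^{kn}$, and the corresponding term for $\tB'$ is $c\,(\zeta')^{kn}$. Hence a broken line for $\tB$ with final label $\const_\bl z^{m_\bl}\sigma^{n_\bl}$ corresponds to a broken line for $\tB'$ with final label $\const_\bl z^{m_\bl}(\sigma')^{n_\bl}$, with the same $m_\bl$ and $n_\bl$. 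Consequently $a'_Q(p_1,p_2,m)$ is obtained from $a_Q(p_1,p_2,m)$ by substituting $\sigma_i\mapsto\sigma'_i$ in a formal power series in the $\sigma_i$. This substitution is legitimate because, by nondegeneracy for $\tB$, the power series in the $\sigma_i$ has unique coefficients. The finiteness statement of \cite[Definition-Lemma~6.2]{GHKK} transfers for the same reason: it concerns counts indexed by $n_{\bl_1}+n_{\bl_2}\in N^{0+}_\uf$, which are identical for $\tB$ and $\tB'$. Passing to the limit $Q\to m$ commutes with the substitution, so $a'(p_1,p_2,m)$ is the image of $a(p_1,p_2,m)$. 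In particular it is independent of the sequence of points $Q$, because $a(p_1,p_2,m)$ is.

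\textbf{Step 2: the product formula.} Apply the homomorphism $\phi$ that sends $z^{m}\zeta^{n}\mapsto z^{m}(\zeta')^{n}$ for $m\in M^\circ_\uf$ and $n\in N^{0+}_\uf$. This is the main obstacle: we must check that $\phi$ is a well-defined continuous ring map on the relevant space, namely the $z^{m}\k[[\zeta]]$-type expressions with $m$ ranging over $M^\circ_\uf$, and that it carries \eqref{struct eq} for $\tB$ to \eqref{struct eq} for $\tB'$.

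First write everything as in the discussion after Proposition~\ref{struct}. Each side is $z^{p_1+p_2}$ times a formal power series in the $\zeta_i$. A term $c\sigma^n\thet_m$ equals $c\,z^{p_1+p_2}\zeta^nF_m$, because $m-nB=p_1+p_2$. For principal coefficients, $z^{m}$ with $m\in M^\circ_\uf$ together with the $\zeta_i$ are algebraically independent. So the space $z^{p_1+p_2}\k[[\zeta]]$ is a genuine formal power series ring, and $\phi$ is the substitution $\zeta_i\mapsto\zeta'_i$ on it.

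Substitution is compatible with products and with limits in the $\zeta$-adic topology. It sends $F_m$ to $F'_m$ by Proposition~\ref{where prin}, and it sends $c\sigma^n$ to $c(\sigma')^n$. Therefore it sends $\thet_{p_1}\thet_{p_2}$ to $\thet'_{p_1}\thet'_{p_2}$, and it sends the convergent sum $\sum_m a(p_1,p_2,m)\thet_m$ to $\sum_m a'(p_1,p_2,m)\thet'_m$. Convergence of the image sum holds because $\phi$ preserves $\zeta$-degree.

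One point needs care. For $\tB'$, an expression written in $z^{p_1+p_2}\k[[\zeta']]$ still has uniquely determined coefficients, because the vectors $\sum_j\epsilon_{ij}f_j$ are linearly independent (the rows of $\tB'$ are linearly independent). Hence the equality obtained after substitution is a genuine identity of formal series for $\tB'$, which proves Proposition~\ref{struct} for $\tB'$.
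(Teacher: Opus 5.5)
Your proof is correct and follows the paper's route. Take a principal-coefficient extension, apply Proposition~\ref{struct} there, and transport both the structure constants and the product formula to $\tB'$ via the substitution $\zeta_i\mapsto\zeta'_i$ (equivalently $\sigma_i\mapsto\sigma'_i$) underlying Proposition~\ref{where prin}; the paper states this in two lines, and your Steps 1 and 2 supply the details (the broken-line correspondence for $a_Q$ and the check that the substitution is a continuous ring map on the graded piece $z^{p_1+p_2}\k[[\zeta]]$).
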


In Proposition~\ref{struct plus}, we must retain the hypothesis that the rows of $\tB$ are linearly independent because we may not be able to define theta functions in the case where the rows of $\tB$ are not independent.
However, Proposition~\ref{where prin} suggests how we might attempt to define theta functions for an arbitrary extension $\tB$ of $B$, without the requirement of linearly independent rows:
We find any extension $\tB'$ of~$B$ with nondegenerate coefficients, define theta functions for those nondegenerate coefficients, and then obtain theta functions for $\tB$ by the replacement described above.
This is independent of the choice of $\tB'$ in light of Proposition~\ref{where prin}.
However, it may fail for a different reason, namely because some specializations of formal power series are not well defined.
Therefore, we extend the definition of a \newword{theta function} $\thet_m$ to arbitrary extensions $\tB$ in this way only when $m\in\Theta$.

\begin{proposition}\label{struct plus plus}
Suppose $\tB$ and $\tB'$ are extensions of $B$ such that $\tB$ has nondegenerate coefficients, and write $a(p_1,p_2,m)$ for the structure constants for theta functions defined in terms of $\tB$.
Assume either that $\tB'$ has linearly independent rows or that the vectors $p_1$ and $p_2$ are in~$\Theta$.
Then replacing each $\thet$ by $\thet'$ in \eqref{struct eq} and replacing each $\sigma_i$ by $\sigma'_i$ in each $a(p_1,p_2,m)$ yields a valid relation among theta functions $\thet'$ defined in terms of $\tB'$.
\end{proposition}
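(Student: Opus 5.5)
We start from the relation \eqref{struct eq} for $\tB$, which holds by Proposition~\ref{struct plus} because nondegenerate coefficients imply linearly independent rows. Writing $\thet_{p}=z^{p}F_{p}$ with $F_{p}\in\k[[\zeta]]$, we divide both sides by $z^{p_1+p_2}$. As explained after Proposition~\ref{struct}, each term $c\sigma^n\thet_m$ then becomes $c\zeta^nF_m$ with $m-nB=p_1+p_2$. This turns \eqref{struct eq} into an identity of formal power series in $(\zeta_i:i\in I_\uf)$:
\[F_{p_1}F_{p_2}=\sum_{m}\sum_{c\sigma^n\text{ term of }a(p_1,p_2,m)}c\,\zeta^nF_m ,\]
with the sum convergent in $\k[[\zeta]]$. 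The key point is that this is an identity in the abstract ring $\k[[\zeta_i:i\in I_\uf]]$, and not merely among Laurent monomials in the $z_i$. To justify this, we use nondegenerate coefficients: a monomial $\sigma^n$ determines $n$, so each structure constant is a well-defined power series in formal variables $\sigma_i$. Likewise each $F_m$ is a well-defined series in formal $\zeta_i$, since the vectors $\sum_j\epsilon_{ij}f_j$ are independent. Also, the $\zeta$-exponents on both sides are determined by the $z$-exponents, so equality of Laurent series in the $z_i$ lifts to equality in the formal variables.

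\emph{Case where $\tB'$ has linearly independent rows.} By Proposition~\ref{where prin}, each $F'_m$ is obtained from $F_m$ by the substitution $\zeta_i\mapsto\zeta'_i$. This substitution is a ring homomorphism from $\k[[\zeta]]$ to the completed Laurent-monomial ring for $\tB'$. It is continuous because the $\zeta'_i$ are again ``positive'' with respect to a grading: the monomials $\zeta'^n$ with $n\in N_\uf^{0+}$ are distinct and pairwise independent, since the rows of $\tB'$ are independent. Applying it to the formal identity above and multiplying by $z^{p_1+p_2}$ gives $\thet'_{p_1}\thet'_{p_2}=\sum_m a'(p_1,p_2,m)\thet'_m$, where $a'$ is obtained from $a$ by $\sigma_i\mapsto\sigma'_i$. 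This is the required relation.

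\emph{Case where $p_1,p_2\in\Theta$.} By Theorem~\ref{Theta facts}, the sum has finitely many terms, each $a(p_1,p_2,m)$ is a polynomial, and every $m$ that appears lies in $\Theta$. Hence each $F_m$ involved is a polynomial, and the formal identity is an identity in the polynomial ring $\k[\zeta_i:i\in I_\uf]$. Specializing $\zeta_i\mapsto\zeta'_i$ is now a ring homomorphism into Laurent polynomials, with no convergence issue, even if the rows of $\tB'$ are dependent. By the extended definition of $\thet'_m$ for $m\in\Theta$, specialization sends $z^{m}F_m$ to $\thet'_m$. After multiplying back by $z^{p_1+p_2}$, we recover the stated relation.

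The main obstacle is the first step: rigorously lifting \eqref{struct eq} to an identity among formal variables $\zeta_i$ and $\sigma_i$. In particular, we must check that rewriting $c\sigma^n\thet_m=z^{p_1+p_2}c\zeta^nF_m$ is compatible with the formal variables, i.e. that $nB$ is determined and $m-nB=p_1+p_2$ holds for every term. This is where nondegeneracy of $\tB$ is used essentially.
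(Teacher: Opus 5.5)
Your proposal is correct and follows essentially the route the paper has in mind. The paper gives no separate proof: it treats the proposition as an immediate consequence of the well-defined substitution of Proposition~\ref{where prin} (the same reasoning that yields Proposition~\ref{struct plus}), together with Theorem~\ref{Theta facts} and the extended definition of $\thet'_m$ for $m\in\Theta$ in the degenerate case. Those are exactly your two cases, and your explicit lift of \eqref{struct eq} to an identity in formal variables, using nondegeneracy and $m-nB=p_1+p_2$, simply spells out what the paper leaves implicit.
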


For some results, we need an even stronger condition than nondegenerate coefficients.
 The main results of this paper describe how theta functions change when we mutate $\tB$.
We will also need to use Proposition~\ref{where prin} on exchange matrices obtained from $\tB$ by mutation.  
However, it is possible that $\tB$ has nondegenerate coefficients but some mutation of $\tB$ fails to have nondegenerate coefficients.
For example, $\tB=\begin{bsmallmatrix*}[r]0&-1&-1&1\\1&0&0&-1\end{bsmallmatrix*}$ has nondegenerate coefficients, but $\mu_1(\tB)=\begin{bsmallmatrix*}[r]0&\,\,1&\,\,1&-1\\-1&0&0&0\end{bsmallmatrix*}$ does not.
Thus we want $\tB$ to have \newword{nondegenerating coefficients}, meaning that $\mu_\kk(\tB)$ has nondegenerate coefficients for every sequence $\kk$ of indices (including the empty sequence).

To simplify mutation formulas, we also want a certain sign condition on each $\mu_\kk(\tB)$:
We say that $\tB$ has \newword{signed-nondegenerating coefficients} if, for every sequence $\kk$ of indices (including the empty sequence), the submatrix $[\epsilon^{(\kk)}_{ij}]_{i\in I_\uf, j\in I_\fr}$ of $\mu_\kk(\tB)=[\epsilon^{(\kk)}_{ij}]_{i\in I_\uf, j\in I}$ has linearly independent rows, and each row has a sign, meaning that it consists of either nonnegative entries or nonpositive entries.
Equivalently, writing $(\sigma^{(\kk)}_i:i\in I_\uf)$ for the Laurent monomials associated to $\mu_\kk(\tB)$, each $\sigma^{(\kk)}_i$ is either an ordinary monomial in $(z_i:i\in I_\fr)$ or an ordinary monomial in $(z_i^{-1}:i\in I_\fr)$.
As a motivating example of this definition, if $\tB$ is obtained by any sequence of mutations from an extended exchange matrix with principal coefficients, then $\tB$ has signed-nondegenerating coefficients.
(Up to a transpose, this is ``sign-coherence of $\c$-vectors''  \cite[Corollary~5.5]{GHKK}, combined with ``reciprocity between $\c$-vectors and $\g$-vectors \cite[Theorem~1.2]{Nakanishi11a} and ``linear independence of $\g$-vectors'', which follows from the construction of the $\g$-vector fan in \cite{GHKK}.)

If $\tB$ has signed-nondegenerating coefficients, then for each $i\in I_\uf$, we write $\sgn(\sigma_i)\in\set{\pm1}$ for the sign (nonnegative or nonpositive) of the exponents in $\sigma_i$ (the sign of entries $(\epsilon_{ij}:j\in I_\fr)$).

The Laurent monomials $(\sigma_i:i\in I_\uf)$ span a sublattice of the lattice of Laurent monomials in $(z_i:i\in I_\fr)$.
Under the assumption of signed-nondegenerating coefficients, as an easy consequence of the definition of matrix mutation, the Laurent monomials $(\sigma^{(\kk)}_i:i\in I_\uf)$ span (and are a basis of) the same sublattice, independent of which sequence $\kk$ of indices in $I$ is chosen.

\begin{remark}\label{how to make snd coeffs}  
The notion of signed-nondegenerating coefficients is closely related to the mutation fan.
(See Section~\ref{mat mut sec}.)
In light of \cite[Proposition~5.3]{universal} and \cite[Proposition~5.30]{universal}, choosing signed-nondegenerating coefficients for $B$ means choosing a full-dimensional cone $C$ of the mutation fan for $B^T$, choosing vectors in $C$ that span the ambient space, and using those vectors as columns to extend $B$.
It is algebraically tidier to choose a set that not only spans but is a basis for the ambient space.
As explained in Section~\ref{GHKK compare}, choosing a larger spanning set only adds additional indeterminates that are uninteresting in the small canonical algebra.
\end{remark}

\section{The small canonical algebra}\label{canon sec}
In this section, we define the small canonical algebra associated to $\tB$.
For a comparison of our definition with the definition in \cite{GHKK}, see Section~\ref{GHKK compare}.
Briefly, the small canonical algebra is the algebra generated by all theta functions. 
We now define it more carefully.  

\subsection{Definition of the small canonical algebra}\label{def can sec}
Assume that $\tB$ has linearly independent rows.
Let $\k(\!(\zeta)\!)=\bigcup_{n\in N_\uf}\zeta^n\k[[\zeta]]$ be the ring of formal Laurent series in $(\zeta_i:i\in I_\uf)$.  
The elements of $\k(\!(\zeta)\!)$ are the formal series $\sum_{n\in N_\uf}c_n\zeta^n$ such that $\set{n\in N_\uf:c_n\neq0}$ has a componentwise lower bound in $N_\uf$.
(The lower bound need not be an element of $\set{n\in N_\uf:c_n\neq0}$.)
The map $n\mapsto\zeta^n$ is one-to-one because $\tB$ has linearly independent rows, so each element of $\k(\!(\zeta)\!)$ is well defined as a formal series of Laurent monomials in $(z_i:i\in I)$ with coefficients in $\k$, indexed by $N$.

Let $\k[z^{\pm1}](\!(\zeta)\!)$ be the set of formal series $\sum_{n\in N_\uf}c_n\zeta^n$, with $c_n\in\k$, that can be written (not necessarily uniquely) as a finite sum of elements of the form 
\[z^m\cdot(\text{formal Laurent series in }(\zeta_i:i\in I_\uf))\]
with $m$ varying in $M^\circ_\uf$.
The set $\k[z^{\pm1}](\!(\zeta)\!)$ is closed under the obvious addition of series of Laurent monomials in $(z_i:i\in I)$.
We can also multiply elements of $\k[z^{\pm1}](\!(\zeta)\!)$:
Since the rows of $\tB$ are linearly independent, each monomial $z^n$ with $n\in N$ appears at most finitely times as a term in a finite product of elements of $\k[z^{\pm1}](\!(\zeta)\!)$.
Furthermore, this multiplication can be written in terms of the usual multiplication of Laurent monomials in $(z_i:i\in I)$ and of formal Laurent series to show that $\k[z^{\pm1}](\!(\zeta)\!)$ is closed under multiplication.
We see that $\k[z^{\pm1}](\!(\zeta)\!)$ is a ring, and since each $\sigma_i$ is $\zeta_i\prod_{j\in I_\uf} z_j^{-\epsilon_{ij}}$, it is also a $\k[\sigma^{\pm1}]$-algebra, where $\k[\sigma^{\pm1}]$ is the ring of Laurent polynomials in $(\sigma_i:i\in I_\uf)$.

Continuing to assume that the rows of $\tB$ are linearly independent, we define the \newword{small canonical algebra} $\can(\tB)$ to be the $\k[\sigma^{\pm1}]$-subalgebra of $\k[z^{\pm1}](\!(\zeta)\!)$ generated by theta functions $\thet_m$ for $m\in M^\circ_\uf$.
Thus $\can(\tB)$ is the set of finite $\k[\sigma^{\pm1}]$-linear combinations of finite products of theta functions.
(See Remark~\ref{Theta all OK} for a definition of $\can(\tB)$ for certain $B$, with no conditions on the extension~$\tB$.)

\begin{remark}\label{invert y}
In the definition of $\can(\tB)$, we have inverted the elements $(\sigma_i:i\in I_\uf)$.
This is convenient because it will allow us to write elements of $\can(\tB)$ as elements of $\can(\tB')$, where $\tB'$ is obtained from $\tB$ by mutation (Theorem~\ref{2 muts}).
In other contexts, one may want a smaller algebra: the  $\k[\sigma]$-subalgebra of $\k[z^{\pm1}](\!(\zeta)\!)$ generated by theta functions.  
The proofs of the results in Section~\ref{sym sec} on structure constants rely on the ability to mutate the initial matrix $\tB$.
However, these results imply the analogous results in the smaller algebra, because the structure constants $a(p_1,p_2,m)$ in \eqref{struct eq} are formal power series in $(\sigma_i:i\in I_\uf)$ and don't involve inverses of the~$\sigma_i$.
\end{remark}

We emphasize that the multiplication in $\can(\tB)$ is the obvious multiplication of Laurent monomials in $(z_i:i\in I_\uf)$ and Laurent series in $(\zeta_i:i\in I_\uf)$.
However, the multiplication is also given by \eqref{struct eq}.
We will say that $B$ is \newword{well behaved} if the sum in \eqref{struct eq} is finite for all $p_1$ and $p_2$ and the coefficients $a(p_1,p_2,m)$ are polynomials (rather than formal power series) for all $p_1$, $p_2$, and $m$.
In this case, $\can(\tB)$ is the set of finite $\k$-linear combinations of elements $\sigma^n\thet_m$ for $(n,m)\in N_\uf\oplus M^\circ_\uf$.
As mentioned just after Proposition~\ref{struct}, $\tB$ can fail to be well behaved.
The well behaved case is the case where the set $\Theta$ defined in Section~\ref{struct sec} is all of $M^\circ_\uf$.
In light of Proposition~\ref{Theta B}, the property of being well behaved depends on $B$, not on the choice of extension~$\tB$.
We will see in Remark~\ref{Theta all OK} that in the well-behaved case, we can define the canonical algebra for arbitrary extensions $\tB$, with no requirement of linearly independent rows.

In order to define notions of convergence and basis in the small canonical algebra, aside from Remark~\ref{Theta all OK}, \textbf{for the rest of Section~\ref{canon sec}, we assume that~$\tB$ has nondegenerate coefficients} in the sense of Section~\ref{coeff sec}.
Under that assumption, the set $\set{z_i:i\in I_\uf}\cup\set{\zeta_i:i\in I_\uf}$ is linearly independent.

Consider the associative $\k$-algebra $\bigoplus_{m\in M^\circ_\uf}z^m\k(\!(\zeta)\!)$.
For each $m\in M^\circ_\uf$, the summand $z^m\k(\!(\zeta)\!)$ is a $\k$-vector space consisting of expressions of the form
\[z^m\cdot(\text{formal Laurent series in }(\zeta_i:i\in I_\uf)).\]
The direct sum $\bigoplus_{m\in M^\circ_\uf}z^m\k(\!(\zeta)\!)$ is a $\k$-algebra with the obvious multiplication (multiplying the Laurent monomials in $z$ and multiplying the formal Laurent series).
Because of nondegenerate coefficients, the algebra $\k[z^{\pm1}](\!(\zeta)\!)$ is naturally isomorphic to $\bigoplus_{m\in M^\circ_\uf}z^m\k(\!(\zeta)\!)$.
Otherwise, two different elements of $\bigoplus_{m\in M^\circ_\uf}z^m\k(\!(\zeta)\!)$ could represent the same series of Laurent monomials in $(z_i:i\in I)$.

The algebra $\bigoplus_{m\in M^\circ_\uf}z^m\k(\!(\zeta)\!)$ is graded by the lattice $M^\circ_\uf$, with each direct summand $z^m\k(\!(\zeta)\!)$ having degree $m$.
We call this the \newword{$\g$-vector grading} (using the name from \cite{ca4} despite the difference in conventions).
If $\v$ is an element of $z^m\k(\!(\zeta)\!)$, then we write $\g(\v)=m$.
Thus $\g(z^m)=m$ for all $m\in M^\circ_\uf$ and $\g(\zeta^n)=0$ for all $n\in N_\uf$.
Since each $\sigma_i$ is $\zeta_i\cdot\prod_{j\in I_\uf} z_j^{-\epsilon_{ij}}$, setting $\g(\zeta_i)=0$ amounts to setting $\g(\sigma_i)=-\sum_{j\in I_\uf}\epsilon_{ij}f_j$, so that $\g(\sigma^n)=-nB$ for all $n\in N_\uf$.
This definition of the $\g$-vector would be problematic without the assumption of nondegenerate coefficients, which allows us to independently assign a $\g$-vector to each $\sigma_i$.

There is a natural notion of convergence that makes $\bigoplus_{m\in M^\circ_\uf}z^m\k(\!(\zeta)\!)$ into a complete vector space over $\k$.
First, we say that a sequence of formal Laurent series in $\k(\!(\zeta)\!)$ converges if (1) there exists a vector in $N_\uf$ that is simultaneously a componentwise lower bound on the exponent vectors of all nonzero terms of all Laurent series in the sequence and (2) for every $n\in N_\uf$, the coefficient of $\zeta^n$ eventually stabilizes in the sequence.

Now, given $\v=\sum_{m\in M^\circ_\uf}z^m H_m$ in $\bigoplus_{m\in M^\circ_\uf}z^m\k(\!(\zeta)\!)$, write $S(\v)$ for the finite set $\set{m\in M^\circ_\uf: H_m\neq0}$.
A sequence $\v_0,\v_1,\ldots$ of elements of $\bigoplus_{m\in M^\circ_\uf}z^m\k(\!(\zeta)\!)$ \newword{converges} if the set $S=\bigcup_{k\ge0}S(\v_k)$ is finite and, writing $\v_k=\sum_{m\in S}z^m H_{k,m}$ for each $k\ge 0$, the sequence $H_{0,m},H_{1,m},\ldots$ converges in $\k(\!(\zeta)\!)$ for each $m\in S$.
It is easy to see that if a sequence converges then every rearrangement of the sequence also converges.
Taking the usual definition of an infinite sum as the limit of the sequence of partial sums, we note also that if a sum converges then every rearrangement of the sum converges.  
Thus we will discuss convergence of sums indexed by countable sets, without specifying an enumeration of the indexing set.

The following is an immediate consequence of Proposition~\ref{where prin}.

\begin{corollary}\label{where prin cor}
Suppose $\tB$ and $\tB'$ are extensions of $B$, both with nondegenerate coefficients.
Then the isomorphism from $\bigoplus_{m\in M^\circ_\uf}z^m\k(\!(\zeta)\!)$ to $\bigoplus_{m\in M^\circ_\uf}z^m\k(\!(\zeta')\!)$ that fixes each $z_i$ and sends each $\sigma_i$ to $\sigma'_i$
\begin{enumerate}[label=\bf\arabic*., ref=\arabic*] 
\item
restricts to an isomorphism from $\can(\tB)$ to $\can(\tB')$, and
\item
sends convergent series to convergent series.
\end{enumerate}
\end{corollary}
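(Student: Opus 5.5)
The plan is to first define the map on the ambient algebras and check that it is an algebra isomorphism. Then I will use Proposition~\ref{where prin} to see that it carries theta functions to theta functions, and finally read off both claims.

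\emph{The map on ambient algebras.} Since both $\tB$ and $\tB'$ have nondegenerate coefficients, the sets $\set{z_i:i\in I_\uf}\cup\set{\sigma_i:i\in I_\uf}$ and $\set{z_i:i\in I_\uf}\cup\set{\sigma'_i:i\in I_\uf}$ are each algebraically independent families of Laurent monomials. Each $\zeta_i$ equals $\sigma_i z^{\sum_{j\in I_\uf}\epsilon_{ij}f_j}$, and the analogous formula holds for $\zeta'_i$ with the \emph{same} entries $\epsilon_{ij}$, $j\in I_\uf$, since both matrices extend $B$. So the assignment $z_i\mapsto z_i$, $\sigma_i\mapsto\sigma'_i$ sends $\zeta^n\mapsto(\zeta')^n$ for every $n\in N_\uf$.

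Concretely, on $\bigoplus_m z^m\k(\!(\zeta)\!)$ the map is $z^m H(\zeta)\mapsto z^m H(\zeta')$, applied summand by summand. This is a $\k$-linear bijection, because the identification of $\k(\!(\zeta)\!)$ with $\k(\!(\zeta')\!)$ via $\zeta^n\mapsto(\zeta')^n$ is a bijection of formal Laurent series indexed by $N_\uf$. It is multiplicative because multiplication in both algebras is defined by the same rule on the $z^m$ factors and on the formal Laurent series factors. The inverse map is the same construction with the roles of $\tB$ and $\tB'$ swapped. The map is also $\k[\sigma^{\pm1}]$-semilinear: it sends $\k[\sigma^{\pm1}]$ to $\k[(\sigma')^{\pm1}]$ isomorphically.

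\emph{Part 2.} The map preserves the $\g$-vector grading, so it preserves the finite set $S(\v)$. On each summand $z^m\k(\!(\zeta)\!)$ it merely relabels $\zeta^n$ as $(\zeta')^n$. Hence a common componentwise lower bound on exponents, and eventual stabilization of each coefficient, are both transported verbatim. So convergent sequences and sums go to convergent sequences and sums.

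\emph{Part 1.} By Proposition~\ref{where prin}, applied with $\tB$ nondegenerate and $\tB'$ having linearly independent rows (which nondegenerate coefficients imply), each $\thet'_m$ is obtained from $\thet_m=z^mF_m(\zeta)$ by replacing $\zeta$ with $\zeta'$, that is, $\thet'_m=z^mF_m(\zeta')$. This is exactly the image of $\thet_m$ under the map. Since the map is an algebra isomorphism carrying $\k[\sigma^{\pm1}]$ onto $\k[(\sigma')^{\pm1}]$, it sends the subalgebra generated over $\k[\sigma^{\pm1}]$ by the $\thet_m$ onto the subalgebra generated over $\k[(\sigma')^{\pm1}]$ by the $\thet'_m$. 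That is, it restricts to an isomorphism $\can(\tB)\to\can(\tB')$.

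The only delicate point is the well-definedness of the map in the first step. This is where nondegenerate coefficients for \emph{both} matrices are used: for $\tB$ so that the source is a genuine direct sum in which $\sigma$-exponents are recoverable, and for $\tB'$ so that the target is one and the map is injective. Everything else is formal.
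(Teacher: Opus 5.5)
Your proposal is correct and takes the same route as the paper, which records this corollary as an immediate consequence of Proposition~\ref{where prin}. The ambient map is the relabeling $z^mH(\zeta)\mapsto z^mH(\zeta')$, which is a graded algebra isomorphism that visibly preserves convergence, and Proposition~\ref{where prin} shows it sends each $\thet_m$ to $\thet'_m$ and each $\sigma^n$ to $(\sigma')^n$, so it carries $\can(\tB)$ onto $\can(\tB')$; you have simply written out the formal checks the paper leaves implicit.
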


\begin{remark}\label{Theta all OK} 
When $B$ has the property that $\Theta$ is all of $M^\circ_\uf$, Theorem~\ref{Theta facts} and Corollary~\ref{where prin cor} allow us to define a small canonical algebra for an arbitrary extension~$\tB$, with no requirement of linearly independent rows:
Given any extension~$\tB'$ of $B$ with nondegenerate coefficients, the \newword{small canonical algebra} $\can(\tB)$ is the algebra obtained from $\can(\tB')$ by replacing each~$\sigma'_i$ with~$\sigma_i$.
This makes sense because the definition of $\Theta$ and Theorem~\ref{Theta facts} together imply that the elements of $\can(\tB')$ are all \emph{finite} sums of Laurent monomials in $(z_i:i\in I)$.
This definition of $\can(\tB)$ is also independent of the choice of extension $\tB'$ because of Corollary~\ref{where prin cor}.
The small canonical algebra $\can(\tB)$ is the $\k[\sigma^{\pm1}]$-algebra generated by the theta functions $\thet_m$ for~$\tB$, which were generalized in Section~\ref{coeff sec} to lift the requirement that $\tB$ have linearly independent rows when $m\in\Theta$.
\end{remark}

\subsection[Comparison with GHKK]{Comparison with the Gross-Hacking-Keel-Kontsevich approach}\label{GHKK compare}
We now discuss two differences between our construction and the construction of Gross, Hacking, Keel, and Kontsevich~\cite{GHKK} (called ``GHKK'' in this section).
We also explain why, and to what extent, our results apply to the GHKK construction.
(The constructions and results of GHKK vary as to hypotheses such as principal coefficients, etc.  
In this short comparison of our construction with GHKK, we will not carefully distinguish the various hypotheses of the GHKK results, but rather refer the reader to \cite{GHKK} for such details.)

The first and most significant difference is that GHKK define a canonical algebra only in the well behaved case, and under the hypothesis that $\tB$ has linearly independent rows.
They define the canonical algebra as the vector space of formal linear combinations of theta functions and give it an algebra structure using the structure constants determined in  \cite[Definition-Lemma~6.2]{GHKK} and \cite[Proposition~6.4]{GHKK} (adapted here in Section~\ref{struct sec}).
The algebra they define is finitely generated and contains the upper cluster algebra (in the sense of~\cite{ca3}).
See \cite[Theorem~0.12]{GHKK}.

In the general (not necessarily well behaved) case, GHKK also define a smaller algebra called the \newword{middle cluster algebra}.
They consider the subset $\Theta$ of $M^\circ$ discussed in Section~\ref{struct sec}.
Theorem~\ref{Theta facts} implies that the vector space of formal linear combinations of the theta functions $\thet_m$ such that $m$ is in $\Theta$ has an algebra structure given by the structure constants $a(p_1,p_2,m)$, which are polynomials.
This is the middle cluster algebra.
It contains the \newword{ordinary cluster algebra} (the usual cluster algebra generated by the cluster variables) and is contained in the upper cluster algebra.
See \cite[Theorem~0.3]{GHKK}.

By contrast, here we define a canonical algebra in every case (well behaved or not), with the hypothesis of nondegenerate coefficients.
We can do this because, at worst, each theta function $\thet_m$ is a Laurent monomial $z^m$ times a formal power series in $\k[[\zeta]]$, and there is no obstacle to multiplying such things in $\bigoplus_{m\in M^\circ_\uf}z^m\k(\!(\zeta)\!)$.
In the well-behaved case, we also define a canonical algebra with no hypotheses on the extension of $\tB$.

Outside of the well-behaved case, our $\can(\tB)$ can have severe disadvantages as an algebra, related to the fact that the structure constants $a(p_1,p_2,m)$ can be formal power series rather than polynomials and, worse, the fact that the sum in \eqref{struct eq} can have infinitely many nonzero terms.
We would like for the theta functions to be a basis for $\can(\tB)$, but since products of theta functions may expand as infinite sums of theta functions, we can only hope to have a basis of theta functions with respect to the notion of convergent sums defined above.
However, as explained later in Remark~\ref{completeness problem}, because $\can(\tB)$ is not complete in the topology implied by our notion of convergence, the theta functions may still fall short of what we would want from a basis, even in this sense of convergent sums.

Despite these drawbacks, it is useful to define $\can(\tB)$, the ``algebra generated by theta functions'' with no restrictions on $B$, and define an imperfect notion of ``basis'' later in Section~\ref{bases sec}.
The main advantage is that we can prove results about structure constants for theta functions without worrying about whether $B$ is well behaved.
In the well behaved case, our results in particular describe structure constants for the canonical algebra in the sense of GHKK.
Furthermore, our results describe structure constants in the middle cluster algebra, whether or not $\tB$ is well behaved.

The second difference between our construction and the GHKK construction is that we have in fact defined an algebra that is smaller than the GHKK canonical algebra (in the well behaved case):
To recover the GHKK ``large'' canonical algebra from our small canonical algebra $\can(\tB)$, one may have to put in some additional coefficients.
Strictly from the point of view of defining an algebra generated by theta functions, these additional coefficients are uninteresting, but from another point of view, they are important:  The small canonical algebra might need to be enlarged in order to contain the cluster algebra associated to $\tB$.
That is because, as mentioned in Remark~\ref{not afoul}, a cluster monomial may not equal the corresponding theta function, but rather may be the theta function times a monomial in the frozen variables. 
(Cluster monomials are equal to theta functions, for example, in the case of principal coefficients and in the coefficient-free case.) 

As already discussed in Remark~\ref{useless theta functions}, the construction in \cite{GHKK} provides a theta function $\thet_m$ for any $m\in M^\circ$ (rather than only for $m\in M^\circ_\uf$), but if we write $m$ as $m_0+m_1$ with $m_0\in M^\circ_\uf$ and $m_1$ in the span of $\set{f_i:i\in I_\fr}$, then $\thet_m=\thet_{m_0}z^{m_1}$.
The GHKK canonical algebra is (when $B$ is well behaved) the $\k$-subalgebra of $\bigoplus_{m\in M^\circ}z^m\k(\!(\zeta)\!)$ generated by theta functions $\thet_m$ for $m\in M^\circ$.
When $|I_\fr|>|I_\uf|$, our definition of $\can(\tB)$ as the $\k[\sigma^{\pm1}]$-algebra generated by $\set{\thet_m:m\in M^\circ_\uf}$ is strictly smaller than the GHKK canonical algebra.
But the larger algebra is easily obtained by adjoining some additional Laurent monomials in $(z_i:i\in I_\fr)$.
Specifically, we choose $|I_\fr|-|I_\uf|$ of the indeterminates $(z_i:i\in I_\fr)$ in such a way that the $(\sigma_i:i\in I_\uf)$ and the additional indeterminates $z_i$ span the entire lattice of Laurent monomials in $(z_i:i\in I_\fr)$.
The GHKK canonical algebra is the tensor product over $\k$ of $\can(\tB)$ with the ring of Laurent polynomials in these additional indeterminates.

\subsection{Bases and reduced bases for the small canonical algebra}\label{bases sec}
We continue to assume that $\tB$ has nondegenerate coefficients.
A countable subset~$\overline{\U}$ of $\can(\tB)$ is a \newword{basis} for $\can(\tB)$ if, for every $\v\in \can(\tB)$, there is a unique function $\u\mapsto a_\u$ from $\overline{\U}$ to $\k$ such that $\sum_{\u\in\U} a_\u\u$ converges to $\v$, in the sense of convergence in $\bigoplus_{m\in M^\circ_\uf}z^m\k(\!(\zeta)\!)$ described in Section~\ref{def can sec}.  

\begin{remark}\label{completeness problem}
Our definition of a basis for $\can(\tB)$ has an important drawback:
The vector space $\can(\tB)$ is not complete under convergence in $\bigoplus_{m\in M^\circ_\uf}z^m\k(\!(\zeta)\!)$.
(Indeed, one can easily convince oneself that the completion of $\can(\tB)$ is all of $\bigoplus_{m\in M^\circ_\uf}z^m\k(\!(\zeta)\!)$.)
Thus a basis, as defined above, is not a so-called \emph{Schauder basis} for $\can(\tB)$.
Nevertheless, this notion of basis is useful.
(By analogy, it is useful to describe invertible linear transformations in an $n$-dimensional vector space by giving matrix entries, even though not every choice of $n^2$ entries describes an invertible transformation.)
Most importantly for our purposes, this notion of basis allows us to state and prove results without restricting to the well behaved case.
\end{remark}

In accordance with the philosophy of treating Laurent monomials in $(\sigma_i:i\in I_\uf)$ as coefficients (discussed in Remarks~\ref{useless dimensions} and~\ref{useless theta functions} and in Section~\ref{coeff sec}), we want to pass to a subset of the basis and recover the whole basis using the coefficients.
In the process, we will also put fairly strict conditions on the form of basis elements.

A \newword{reduced basis} for $\can(\tB)$ is a subset $\U=\set{\u_m:m\in M^\circ_\uf}$ of $\can(\tB)$ such that each $\u_m$ is $z^m$ times a formal power series in $\k[[\zeta]]$  \emph{with constant coefficient~$1$} and such that $\overline{\U}=\set{\sigma^n\u:n\in N_\uf,\u\in\U}=\set{\sigma^n\u_m:n\in N_\uf,m\in M^\circ_\uf}$ is a basis.
In this case, we say that the basis $\overline{\U}$ is \newword{reducible} and \newword{reduces} to~$\U$.
In fact, we will see in Theorem~\ref{pointed set basis} that the requirement that $\overline{\U}$ is a basis is redundant:  
As long as each $\u_m$ is $z^m$ times a formal power series in $\k[[\zeta]]$ with constant coefficient~$1$, the set $\U=\set{\u_m:m\in M^\circ_\uf}$ of $\can(\tB)$ is a reduced basis.
For the moment, we retain the additional hypothesis.

The theta functions are the motivating example of a reduced basis for~$\can(\tB)$.
We will call this basis the \newword{theta basis} later in the paper.

\begin{proposition}\label{theta basis}
Suppose $\tB$ has nondegenerate coefficients.
Then $\set{\thet_m:m\in M^\circ_\uf}$ is a reduced basis for $\can(\tB)$.
\end{proposition}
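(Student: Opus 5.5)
The plan is to check the two requirements of a reduced basis in turn. The normalization is immediate from Section~\ref{thet sec}: each $\thet_m$ equals $z^mF_m$, where $F_m\in\k[[\zeta]]$ has constant term $1$ (the unique unbent broken line contributes $z^m$). What remains is to show that $\overline{\U}=\set{\sigma^n\thet_m:n\in N_\uf,m\in M^\circ_\uf}$ is a basis, i.e.\ that every $\v\in\can(\tB)$ is the limit of a convergent sum of these elements with uniquely determined coefficients. Throughout, I will use the $\g$-vector grading. We have $\g(\sigma^n\thet_m)=m-nB$ and $\sigma^n\thet_m=z^{m-nB}\zeta^nF_m$. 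Nondegenerate coefficients make this grading well defined. They also ensure that, for a fixed degree $g$ and a fixed $n$, there is exactly one $m$ with $\g(\sigma^n\thet_m)=g$, namely $m=g+nB$.

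\emph{Existence.} By definition, $\v$ is a finite $\k[\sigma^{\pm1}]$-combination of finite products of theta functions, so it suffices to treat a product $\thet_{p_1}\cdots\thet_{p_k}$. For $k=2$, Proposition~\ref{struct plus} writes $\thet_{p_1}\thet_{p_2}$ as $\sum_m a(p_1,p_2,m)\thet_m$. Each term $c\sigma^n\thet_m$ of this sum equals $c\,z^{p_1+p_2}\zeta^nF_m$ with $n\in N_\uf^{0+}$. Thus every term lies in the single graded piece of degree $p_1+p_2$, all exponent vectors are bounded below by $0$, and each $\zeta$-monomial receives only finitely many contributions. This is exactly convergence in $\bigoplus_m z^m\k(\!(\zeta)\!)$. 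For $k>2$ I will induct. Write $\thet_{p_1}\cdots\thet_{p_{k-1}}$ as a convergent sum $\sum c\,\sigma^n\thet_m$, all of degree $p_1+\cdots+p_{k-1}$ and with $n\ge0$. Multiply termwise by $\thet_{p_k}$ and expand each $\thet_m\thet_{p_k}$ by Proposition~\ref{struct plus}. Every resulting term has the form $c\,z^{p_1+\cdots+p_k}\zeta^{n'}F_{m'}$ with $n'\ge0$. Since a term contributes to $\zeta^{\ell}$ only if $n'\le\ell$, and $n'\ge0$, each coefficient involves finitely many terms. Hence the double sum converges, may be rearranged, and equals the product, because multiplication by $\thet_{p_k}$ is continuous for this convergence. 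I expect this bookkeeping, justifying the interchange of the infinite expansion with multiplication, to be the main (though routine) obstacle.

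\emph{Uniqueness.} By linearity, it suffices to show that if $\sum a_{n,m}\sigma^n\thet_m$ converges to $0$, then all $a_{n,m}=0$. Convergence involves only finitely many degrees, so I will project onto a single degree $g$. There the sum reads $z^g\sum_n a_n\zeta^nF_{g+nB}$, and by convergence the indices $n$ with $a_n\neq0$ have a common componentwise lower bound. Suppose some $a_n\neq0$. Choose $n_0$ with $a_{n_0}\neq0$ minimizing the coordinate sum; this minimum exists because of the lower bound and integrality. Now compare coefficients of $\zeta^{n_0}$. A term $a_n\zeta^nF_{g+nB}$ contributes to $\zeta^{n_0}$ only if $n\le n_0$ componentwise. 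By minimality of the coordinate sum, such a term with $a_n\neq0$ can only be the one with $n=n_0$. Since $F$ has constant term $1$, the coefficient of $\zeta^{n_0}$ is therefore $a_{n_0}\neq0$, contradicting that the sum is $0$. This establishes uniqueness, and so $\set{\thet_m}$ is a reduced basis.
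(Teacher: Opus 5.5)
Your proposal is correct and takes essentially the same approach as the paper: existence comes from reducing to finite products of theta functions and inducting via the structure constants of Proposition~\ref{struct plus}, and uniqueness comes from restricting to one $\g$-degree and finding a minimal index whose $\zeta$-coefficient cannot cancel. The only differences are cosmetic: you spell out the convergence bookkeeping that the paper compresses into ``a simple induction'', and you choose the index minimizing the coordinate sum where the paper takes a componentwise-minimal element, with both resting on the same lower bound coming from convergence.
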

\begin{proof} 
Each $\thet_m$ is $z^mF_m$ for a formal power series $F_m$ in $\k[[\zeta]]$ with constant term~$1$.
We must show that $\set{\sigma^n\thet_m:n\in N_\uf,m\in M^\circ_\uf}$ is a basis.

First, we show how to express every element of $\can(\tB)$ as a convergent sum $\sum_{m\in M^\circ_\uf}\sum_{n\in N_\uf}c_{m,n}\,\sigma^n\thet_m$.
Since every element of $\can(\tB)$ is a finite $\k[\sigma^{\pm1}]$-linear combination of finite products of theta functions, it is enough to show that every finite product $\thet_{p_1}\cdots\thet_{p_k}$ of theta functions admits such an expression.
Since each $a(p_1,p_2,m)$ in Proposition~\ref{struct} is in $\k[[\sigma]]$, a simple induction shows that $\thet_{p_1}\cdots\thet_{p_k}$ is $\sum_{m\in M^\circ_\uf}a(p_1,\ldots,p_k,m)\thet_m$ for power series $a(p_1,\ldots,p_k,m)\in\k[[\sigma]]$.
Thus $\thet_{p_1}\cdots\thet_{p_k}$ can be expressed as a sum $\sum_{m\in M^\circ_\uf}\sum_{n\in N_\uf^{0+}}c_{m,n}\,\sigma^n\thet_m$ where each $c_{m,n}$ is an element of $\k$.
Recall from Section~\ref{context sec} (under the heading \textit{Definitions related to the dual}) the definition of $nB\in M_\uf^\circ$ and recall that $\sigma^n=z^{-nB}\zeta^n$.
Writing each theta function $\thet_m$ as $z^mF_m$ where $F_m\in\k[[\zeta]]$ is the $F$-series of $\thet_m$, we rewrite the sum as 
$\sum_{m\in M^\circ_\uf}\sum_{n\in N_\uf^{0+}}c_{m,n}\,\zeta^nz^{m-nB}F_m$.  
Write $p$ for $p_1+\cdots+p_k$.
Since multiplication in $\can(\tB)$ is graded by the $\g$-vector, we have $m-nB=p$ whenever $c_{m,n}\neq 0$, so we can further rewrite the sum as $z^p\sum_{n\in N_\uf^{0+}}c_{p+nB,n}\,\zeta^nF_{p+nB}$.
Since each $F_{p+nB}$ is a formal power series in $\k[[\zeta]]$, the sum converges in $\k[[\zeta]]$, and we conclude that the sum $\sum_{m\in M^\circ_\uf}\sum_{n\in N_\uf^{0+}}c_{m,n}\,\sigma^n\thet_m$ converges in $\bigoplus_{m\in M^\circ_\uf}z^m\k(\!(\zeta)\!)$, as desired.

If some element admits two expressions as a convergent sum, then the difference of the two expressions is a sum $\sum_{m\in M^\circ_\uf}\sum_{n\in N_\uf}c_{m,n}\,\sigma^n\thet_m$ that converges to $0$.
We will show that $c_{m,n}=0$ for all $m\in M^\circ_\uf$ and $n\in N_\uf$.
Again writing $\thet_m$ as $z^mF_m$ and using $\sigma^n=z^{-nB}\zeta^n$, we rewrite the sum as $\sum_{m\in M^\circ_\uf}\sum_{n\in N_\uf}c_{m,n}\,\zeta^nz^{m-nB}F_m$
Since the sum converges, among all the partial sums, there are only finitely different Laurent monomials $z^{m-nB}$ appearing in the sum with $c_{m,n}\neq0$.

Suppose for the sake of contradiction that one or more coefficients $c_{m,n}$ is nonzero and write $q$ for $m-nB$.
Write $T_q=\set{n\in N_\uf:c_{q+nB,n}\neq0}$.
Again because the sum converges, there is a componentwise lower bound for $T_q$ in $N_\uf$, and therefore~$T_q$ contains at least one element $r$ with componentwise minimal $f_i\text{-coordinates}$.
We obtain a contradiction:  
By construction, $c_{q+rB,r}$ is nonzero, so since the constant term of $F_r$ is $1$, the summand indexed by $m=q+rB$ and $n=r$ in $\sum_{m\in M^\circ_\uf}\sum_{n\in N_\uf}c_{m,n}\,\zeta^nz^{m-nB}F_m$ includes a nonzero term $c_{q+rB,r}\zeta^rz^q$.
But because $r$ was chosen to be minimal in $T_q$, no other choice of $m$ and $n$ gives any terms involving $\zeta^rz^q$.
This contradicts the fact that $\sum_{m\in M^\circ_\uf}\sum_{n\in N_\uf}c_{m,n}\,\sigma^n\thet_m$ converges to $0$, and we conclude that $c_{m,n}=0$ for all $m\in M^\circ_\uf$ and $n\in N$.

This contradiction shows that each element of $\can(\tB)$ admits only one expression as a convergent sum.
\end{proof}

If $B$ is well behaved, then every element of $\can(\tB)$ is a finite $\k$-linear combination of elements $\sigma^n\thet_m$ for $n\in N_\uf$ and $m\in M^\circ_\uf$.
Thus we have the following special case of Proposition~\ref{theta basis}.
It will be apparent from Lemma~\ref{point el} that the corollary also holds with the theta basis replaced be any reduced basis.

\begin{corollary}\label{well be theta}
Suppose $B$ is well behaved and $\tB$ has nondegenerate coefficients.
Then the reduced basis $\set{\thet_m:m\in M^\circ_\uf}$ is a (Hamel) $\k[\sigma^{\pm1}]$-basis for $\can(\tB)$.
Equivalently, the set $\set{\sigma^n\thet_m:n\in N_\uf,m\in M^\circ_\uf}$ is a (Hamel) $\k$-basis for $\can(\tB)$.
\end{corollary}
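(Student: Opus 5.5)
We must show that when $B$ is well behaved and $\tB$ has nondegenerate coefficients, every element of $\can(\tB)$ is a \emph{unique finite} $\k$-combination of the elements $\sigma^n\thet_m$, and that this is the same as $\set{\thet_m}$ being a $\k[\sigma^{\pm1}]$-basis. The plan is to get spanning from the finiteness of structure constants and independence from Proposition~\ref{theta basis}.

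\emph{Spanning.} Since $\can(\tB)$ is generated as a $\k[\sigma^{\pm1}]$-algebra by the theta functions, it suffices to write each finite product $\thet_{p_1}\cdots\thet_{p_k}$ as a finite $\k[\sigma^{\pm1}]$-combination of theta functions. I will induct on $k$ using \eqref{struct eq}, which holds without principal coefficients by Proposition~\ref{struct plus}. Well-behavedness says that each product $\thet_{p}\thet_{p'}$ expands as a finite sum $\sum_m a(p,p',m)\thet_m$ with each $a(p,p',m)$ a polynomial in $(\sigma_i)$. So if $\thet_{p_1}\cdots\thet_{p_{k-1}}$ is a finite combination $\sum_j c_j\thet_{m_j}$ with $c_j\in\k[\sigma^{\pm1}]$, then multiplying by $\thet_{p_k}$ and expanding each $\thet_{m_j}\thet_{p_k}$ again gives a finite combination. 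Expanding the Laurent polynomial coefficients monomial by monomial then gives a finite $\k$-combination of the elements $\sigma^n\thet_m$.

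\emph{Linear independence.} Suppose a finite $\k$-combination $\sum c_{m,n}\sigma^n\thet_m$ equals $0$. A finite sum is in particular a convergent sum in $\bigoplus_{m} z^m\k(\!(\zeta)\!)$, and it converges to $0$. By the uniqueness part of Proposition~\ref{theta basis}, every $c_{m,n}=0$. Hence $\set{\sigma^n\thet_m:n\in N_\uf,m\in M^\circ_\uf}$ is a Hamel $\k$-basis.

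\emph{Equivalence of the two formulations.} The $\k[\sigma^{\pm1}]$-span of the $\thet_m$ coincides with the $\k$-span of the $\sigma^n\thet_m$. Moreover, a $\k[\sigma^{\pm1}]$-linear relation among the $\thet_m$ expands into a $\k$-linear relation among the $\sigma^n\thet_m$ with the same coefficients, and conversely. This uses only that the $\sigma^n$ for distinct $n$ are distinct monomials, which holds by nondegenerate coefficients.

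No step is a genuine obstacle. The only point needing care is to confirm that \eqref{struct eq} applies in the nondegenerate, non-principal setting, which Proposition~\ref{struct plus} provides.
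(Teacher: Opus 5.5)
Your proof is correct and takes essentially the paper's approach. The paper notes that well-behavedness makes every element of $\can(\tB)$ a finite $\k$-linear combination of the $\sigma^n\thet_m$, and then obtains the corollary as a special case of Proposition~\ref{theta basis}, since that proposition's uniqueness for convergent sums covers finite sums. You additionally spell out the induction behind spanning and the translation between the $\k[\sigma^{\pm1}]$- and $\k$-formulations, which the paper leaves implicit.
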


Indeed, Corollary~\ref{well be theta} essentially recovers the Gross-Hacking-Keel-Kontsevich construction of the canonical algebra as the space of formal linear combinations of theta functions.
(See Section~\ref{GHKK compare}.)

\begin{remark}\label{not reduced basis}
One might hope to extend Corollary~\ref{well be theta} beyond the well behaved case by taking a larger base ring than $\k[\sigma^{\pm1}]$.
For example, one might wonder whether the theta functions form a (Hamel) $\k(\!(\sigma)\!)$-basis for $\can(\tB)$.
But in fact, $\bigoplus_{m\in M^\circ_\uf}z^m\k(\!(\zeta)\!)$ is not even a $\k(\!(\sigma)\!)$-algebra, because $\k(\!(\sigma)\!)$ is not even a subset of $\bigoplus_{m\in M^\circ_\uf}z^m\k(\!(\zeta)\!)$.
(A formal Laurent series in $(\sigma_i:i\in I_\uf)$ can be nonzero in infinitely many direct summands of $\bigoplus_{m\in M^\circ_\uf}z^m\k(\!(\zeta)\!)$.)
\end{remark}

To conclude this section, we give a characterization of general reduced bases in terms of the theta basis.
Recall from Section~\ref{context sec} the definition of the vector $nB\in M^\circ_\uf$, given $n\in N_\uf$.
Recall also from Section~\ref{def can sec} that the $\g$-vector of $\sigma^n$ is $\g(\sigma^n)=-nB$.
We begin with the following lemma.

\begin{lemma}\label{point el}
Suppose $\tB$ has nondegenerate coefficients.
Let $\U=\set{\u_p:p\in M^\circ_\uf}$ be a reduced basis for $\can(\tB)$, indexed with $\g(\u_p)=p$ for all $p\in M^\circ_\uf$.
Let~$\v\in\can(\tB)$ be of the form $z^mH$ for some formal power series $H\in \k[[\zeta]]$ with constant term $1$.
If the expression for $\v$ in terms of $\U$ is $\v=\sum_{p\in M^\circ_\uf}\sum_{n\in N_\uf}c_{p,n}\,\sigma^n \u_p$, then
\begin{enumerate}[label=\bf\arabic*., ref=\arabic*]
\item \label{c g}
All nonzero terms $c_{p,n}\,\sigma^n \u_p$ have $\g$-vector $m$.
\item \label{c is 1}
$c_{m,0}=1$.
\item \label{c is 0}
$c_{p,0}=0$ for $p\in M^\circ_\uf\setminus\set{m}$.
\item \label{c B n}
If $c_{p,n}\neq0$ for some $p\in M^\circ_\uf$ and $n\in N_\uf$, then $n\in N_\uf^{0+}$ and $p=m+nB$.
\end{enumerate}
In particular, $\v=\u_m+\sum_{n\in N^+_\uf}(c_{m+nB,n})\sigma^n\u_{m+nB}$.
\end{lemma}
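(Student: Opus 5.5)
The plan is to work entirely in $\bigoplus_{p\in M^\circ_\uf}z^p\k(\!(\zeta)\!)$, where convergence is checked degree by degree. We rewrite each basis element as $\u_p=z^pG_p$ with $G_p\in\k[[\zeta]]$ having constant term $1$, and we write $\sigma^n=z^{-nB}\zeta^n$. The given expansion then becomes
\[
\v=\sum_{p,n}c_{p,n}\,\zeta^n z^{p-nB}G_p .
\]
Each summand is homogeneous of $\g$-vector $p-nB$. This follows because $\g(\sigma^n)=-nB$ and $\g$ is additive, which is well defined under nondegenerate coefficients.

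For item~\ref{c g}, we use that convergence means the support set $S$ of $\g$-degrees is finite and that each degree component converges separately. Group the terms by $q=p-nB$. For each $q\ne m$, the degree-$q$ part of the sum converges to the degree-$q$ part of $\v$, which is $0$. That degree-$q$ subseries is $z^q\sum_{n\in T_q}c_{q+nB,n}\zeta^nG_{q+nB}$, where $T_q=\{n:c_{q+nB,n}\ne0\}$. Convergence in $\k(\!(\zeta)\!)$ forces $T_q$ to have a componentwise lower bound, so if $T_q$ is nonempty it contains a minimal element $r$. Exactly as in the proof of Proposition~\ref{theta basis}, the coefficient of $\zeta^r$ in the subseries is $c_{q+rB,r}\ne0$, since $G$ has constant term $1$ and no other $n\in T_q$ is $\le r$. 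This contradiction shows $T_q=\emptyset$ for $q\ne m$, which gives item~\ref{c g}. The step needing care is that a minimal $r$ really exists and that nothing else can contribute to $\zeta^r$. Minimality among exponents bounded below handles this: the set of elements of $T_q$ that are $\le$ a chosen element is finite.

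For the remaining items we look at the single degree $m$. There $H=\sum_{n\in T_m}c_{m+nB,n}\zeta^nG_{m+nB}$. Take a componentwise-minimal $r\in T_m$. The coefficient of $\zeta^r$ on the right is $c_{m+rB,r}\ne0$, while on the left $H\in\k[[\zeta]]$. Hence $r\in N^{0+}_\uf$, and since every element of $T_m$ dominates some minimal element, every element of $T_m$ lies in $N^{0+}_\uf$. Together with $p=m+nB$ from item~\ref{c g}, this is item~\ref{c B n}.

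Item~\ref{c is 0} follows because $c_{p,0}\ne0$ forces $p=m+0B=m$. For item~\ref{c is 1}, compare constant terms: only $n=0$ contributes to $\zeta^0$, since all other $n$ in $T_m$ are positive. That contribution is $c_{m,0}\cdot1$, while the constant term of $H$ is $1$, so $c_{m,0}=1$. The final displayed formula is a restatement of items~\ref{c g}--\ref{c B n}.
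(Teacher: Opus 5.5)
Your proof is correct and follows the paper's argument: a componentwise-minimal exponent in degree $m$ cannot be cancelled, which gives item~4, and items~2 and~3 are then read off. The only differences are these. For item~1 the paper just invokes the uniqueness clause in the definition of a basis (the off-degree part of the sum is an expansion of $0$, hence trivial), where you re-run the lowest-term argument from Proposition~\ref{theta basis}. You also make explicit a step the paper leaves implicit, namely that every element of $T_m$ dominates a minimal one, which carries $N^{0+}_\uf$-membership from the minimal elements to all of $T_m$.
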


\begin{proof}
Since $\sum_{p\in M^\circ_\uf}\sum_{n\in N_\uf}c_{p,n}\,\sigma^n \u_p$ converges to $\v$, which is homogeneous with $\g$-vector $m$, the restriction of $\sum_{p\in M^\circ_\uf}\sum_{n\in N_\uf}c_{p,n}\,\sigma^n\u_p$  to terms whose $\g$-vector is not $m$ converges to $0$.
Since $\U$ is a reduced basis, the trivial expression is the only expression for $0\in\can(\tB)$ as a convergent sum of terms $\sigma^n\u_p$.
Assertion~\ref{c g} follows.

Suppose $c_{p,n}\neq0$.
The $\g$-vector of $\sigma^n \u_p$ is $-nB+p$, which equals $m$ by Assertion~\ref{c g}.
To prove Assertion~\ref{c B n}, it remains to show that $n\in N_\uf^{0+}$.
The term $c_{p,n}\sigma^n\u_p$ is $z^m\zeta^n$ times a formal power series in $\k[[\zeta]]$ with constant term $1$.
In particular, if $c_{p,n}\neq0$, then $c_{p,n}\sigma^n\u_p$ contributes a nonzero term $c_{p,n}z^m\zeta^n$ plus constant multiples of $z^m\zeta^{n'}$ such that $n'\ge n$ in the componentwise order on $N_\uf$.
The fact that $\sum_{p\in M^\circ_\uf}\sum_{n\in N_\uf}c_{p,n}\,\sigma^n \u_p$ converges implies that there is a componentwise lower bound on $\set{n\in N_\uf:c_{m+nB,n}\neq0}$.
Therefore also $\set{n\in N_\uf:c_{m+nB,n}\neq0}$ contains at least one element that is minimal in the componentwise order.
Choose $n$ to be minimal.  
Since $\v$ is $z^m$ times a formal power series in $\k[[\zeta]]$ with constant term~$1$, if $n\not\in N_\uf^{0+}$, the term $c_{p,n}z^m\zeta^n$ must cancel with some other term.
Each $\sigma^{n'}\u_{p'}$ is $z^m\zeta^{n'}$ times a formal power series in $\k[[\zeta]]$, so $\sigma^{n'}\u_{p'}$ can only provide a term that cancels with $c_{p,n}z^m\zeta^n$ if $n'\le n$ componentwise.
Since $n$ was chosen to be minimal, we see that the term $c_{p,n}z^m\zeta^n$ cannot be canceled by another term.
By this contradiction, we conclude that $n\in N_\uf^{0+}$, and we have proved Assertion~\ref{c B n}.
Assertion~\ref{c is 0} follows immediately.

By hypothesis, $\v$ is $z^m$ times a formal power series $H$ in $\k[[\zeta]]$ with constant term~$1$.
Also, Assertion~\ref{c B n} says that $\v=\sum_{n\in N^{0+}_\uf}(c_{m+nB,n})\sigma^n\u_{m+nB}$.
Each $\sigma^n\u_{m+nB}$ is $z^m\zeta^n$ times a formal power series in $\k[[\zeta]]$ with constant term $1$, so $\sigma^n\u_{m+nB}$ does not contribute to the constant term of~$H$ for $n\neq0$.
Thus the constant term of $H$ is $c_{m,0}$, which is therefore~$1$, and we have proved Assertion~\ref{c is 1}.

These assertions together say that $\v=\u_m+\sum_{n\in N^+_\uf}(c_{m+nB,n})\sigma^n\u_{m+nB}$.
\end{proof}

The following theorem is the desired characterization of reduced bases.
We emphasize an essential hypothesis of the theorem:  We must know \textit{a priori} that $\U$ is a subset of $\can(\tB)$.

\begin{theorem}\label{pointed set basis}
If $\tB$ has nondegenerate coefficients and $\U=\set{\u_m:m\in M^\circ_\uf}$ is a subset of $\can(\tB)$, then the following are equivalent:
\begin{enumerate}[label=\rm(\roman*), ref=(\roman*)]
\item \label{u point}
$\U$ is a reduced basis for $\can(\tB)$, indexed so that $\g(\u_p)=p$ for all $p\in M^\circ_\uf$.
\item \label{each point}
Each $\u_m$ is $z^m$ times a formal power series in $\k[[\zeta]]$ with constant coefficient~$1$.
\item \label{u thet}
Each $\u_m$ is $\thet_m+\sum_{n\in N_\uf^+}c^{(m)}_n\,\sigma^n\thet_{m+nB}$ for some constants $c^{(m)}_n\in\k$.
\item \label{u v}
If $\V=\set{\v_p:p\in M^\circ_\uf}$ is a reduced basis for $\can(\tB)$, then each $\u_m$ is equal to $\v_m+\sum_{n\in N_\uf^+}c^{(m)}_n\,\sigma^n\v_{m+nB}$ for some constants $c^{(m)}_n\in\k$.
\end{enumerate}
\end{theorem}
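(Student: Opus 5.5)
I will prove the cycle of implications \ref{u point}$\Rightarrow$\ref{each point}$\Rightarrow$\ref{u thet}$\Rightarrow$\ref{u point}, and then \ref{u v} separately: \ref{u v}$\Rightarrow$\ref{u thet} and \ref{u point}$\Rightarrow$\ref{u v}.

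\emph{\ref{u point}$\Rightarrow$\ref{each point}.} This is immediate from the definition of a reduced basis. Each $\u_m$ is $z^{p}$ times a power series with constant term~$1$ for some $p$, and the indexing $\g(\u_m)=m$ forces $p=m$.

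\emph{\ref{each point}$\Rightarrow$\ref{u thet}.} Apply Lemma~\ref{point el} with the theta basis, which is a reduced basis by Proposition~\ref{theta basis}, and with $\v=\u_m$. This is allowed because $\u_m\in\can(\tB)$ by hypothesis. The lemma's final assertion gives exactly the required form.

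\emph{\ref{u v}$\Rightarrow$\ref{u thet}.} Take $\V$ to be the theta basis.

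\emph{\ref{u point}$\Rightarrow$\ref{u v}.} This is again Lemma~\ref{point el}, applied to $\v=\u_m$ and the reduced basis $\V$; its hypothesis holds by \ref{each point}.

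\emph{\ref{u thet}$\Rightarrow$\ref{u point}.} This is the substantive step. Each $\u_m$ is $z^m$ times a series with constant term $1$, because $\sigma^n\thet_{m+nB}=z^m\zeta^nF_{m+nB}$ and $n\in N^+_\uf$. It remains to show that $\overline{\U}=\set{\sigma^n\u_m}$ is a basis. The approach is a triangular change of basis from the theta basis, in each $\g$-degree separately.

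Fix a degree $q\in M^\circ_\uf$. Elements of degree $q$ in $\overline\U$ are $\sigma^n\u_{q+nB}$, and those in the theta basis are $\sigma^n\thet_{q+nB}$, both indexed by $n\in N_\uf$. Condition~\ref{u thet} gives
\[\sigma^n\u_{q+nB}=\sigma^n\thet_{q+nB}+\sum_{n'\in N^+_\uf}c^{(q+nB)}_{n'}\,\sigma^{n+n'}\thet_{q+(n+n')B}.\]
This expansion is unitriangular with respect to the componentwise order on $N_\uf$. I will invert it formally to write $\sigma^n\thet_{q+nB}=\sigma^n\u_{q+nB}+\sum_{n''>n}d_{n,n''}\,\sigma^{n''}\u_{q+n''B}$. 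The coefficients $d$ are computed recursively; the recursion is finite because the set of $n''$ with $n\le n''$ is well-founded from below and each $d_{n,n''}$ only involves $n'$ between $n$ and $n''$. I then check that this inverse series converges: each term $\sigma^{n''}\u_{q+n''B}$ is $z^q\zeta^{n''}$ times a power series, so the coefficient of each $z^q\zeta^k$ receives contributions only from the finitely many $n''\le k$.

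Existence of expansions then follows. Any $\v\in\can(\tB)$ is a convergent combination of theta basis elements (Proposition~\ref{theta basis}). Substituting the inverse expansions and regrouping gives a convergent expression in $\overline\U$, degree by degree, with only finitely many degrees involved. For each $\zeta$-exponent $k$, only finitely many pairs contribute, and there is a uniform lower bound $n\ge n_{\min}$.

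Uniqueness follows by the same minimal-element argument as in the proof of Proposition~\ref{theta basis}, since each $\sigma^n\u_{q+nB}$ has leading term $z^q\zeta^n$ with coefficient $1$. I expect the main obstacle to be the bookkeeping in this last step: justifying that the rearrangement of the double sum is legitimate under the paper's notion of convergence, using the lower bound on exponents and the finiteness of $\set{n'':n_{\min}\le n''\le k}$.
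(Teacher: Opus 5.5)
Your proposal is correct and is essentially the paper's argument. The paper does the substantive step as (iv)$\Rightarrow$(i) for an arbitrary reduced basis $\V$, writing the inverse of the unitriangular change of basis explicitly as an alternating sum over chains $0=n_0<\cdots<n_k=r$ in $N_\uf^{0+}$; you do the same inversion recursively against the theta basis to get (iii)$\Rightarrow$(i), and both arguments then obtain existence by substitution, uniqueness by the minimal-exponent argument of Proposition~\ref{theta basis}, and the remaining implications from Lemma~\ref{point el}.
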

\begin{proof}
By definition, \ref{u point} implies \ref{each point}.
If~\ref{each point} holds, then~\ref{u v} holds by Lemma~\ref{point el}.

Now suppose \ref{u v} so that, taking $c^{(m)}_0=1$, each $\u_m$ is $\sum_{n\in N_\uf^{0+}}c^{(m)}_n\,\sigma^n\v_{m+nB}$.
For each $p\in M^\circ_\uf$ and $r\in N_\uf^{0+}$, let $d^{(p)}_r$ be $\sum(-1)^k\prod_{i=1}^kc^{(p+n_{i-1}B)}_{n_i-n_{i-1}}$, a finite sum indexed by all chains of the form $0=n_0<n_1<\cdots<n_k=r$ in the componentwise order.
Each $k$ must be positive, except when $r=0$, in which case $k$ must be zero and we interpret the empty product as $1$, so that $d^{(p)}_0=1$.
For each $p\in M^\circ_\uf$, we compute (taking $s=n+r$):
\begin{align*}
\sum_{r\in N_\uf^{0+}}d^{(p)}_r\,\sigma^r\u_{p+rB}
&=\sum_{r\in N_\uf^{0+}}d^{(p)}_r\,\sigma^r\sum_{n\in N_\uf^{0+}}c^{(p+rB)}_n\,\sigma^n\v_{p+rB+nB}\\
&=\sum_{s\in N_\uf^{0+}}\sigma^s\v_{p+sB}\sum_{0\le r\le s}d^{(p)}_rc^{(p+rB)}_{s-r}\\
&=\sum_{s\in N_\uf^{0+}}\sigma^s\v_{p+sB}\sum_{0\le r\le s}c^{(p+rB)}_{s-r}\sum(-1)^k\prod_{i=1}^kc^{(p+n_{i-1}B)}_{n_i-n_{i-1}}\,,
\end{align*}
where the third sum is over chains $0=n_0<n_1<\cdots<n_k=r$.
The sum over $r$ and the sum over chains can be combined, and we obtain
\begin{align*}
\sum_{r\in N_\uf^{0+}}d^{(p)}_r\,\sigma^r\u_{p+rB}
&=\sum_{s\in N_\uf^{0+}}\sigma^s\v_{p+sB}\sum(-1)^k\prod_{i=1}^{k+1}c^{(p+n_{i-1}B)}_{n_i-n_{i-1}}\,,
\end{align*}
where the sum is over chains $0=n_0<n_1<\cdots<n_k\le n_{k+1}=s$ (and $n_k$ is the~$r$ from the earlier expression).
That sum is $1$ if $s=0$ or $0$ if $s>0$.
(If $s>0$ then each chain in the interval $[0,s]$ appears exactly twice, once with $n_k=n_{k+1}$ and once with $n_k<n_{k+1}$.
These have the same weight, except with opposite signs, because $c^{(p+n_kB)}_0=1$.)
Thus $\sum_{r\in N_\uf^{0+}}d^{(p)}_r\,\sigma^r\u_{p+rB}$ converges to $\v_p$. 

Now write each $\v_m$ as $z^mH_m$ for $H_m\in\k[[\zeta]]$.
Since also $\sigma^n=z^{-nB}\zeta^n$, each $\u_m=\sum_{n\in N_\uf^{0+}}c^{(m)}_n\,\sigma^n\v_{m+nB}$ is $\sum_{n\in N_\uf^{0+}}c^{(m)}_n\,z^{-nB}\zeta^nz^{m+nB}H_{m+nB}=z^mK_m$ for some $K_m\in\k[[\zeta]]$.

Suppose $\w\in\can(\tB)$.
Express $\w$ as a convergent linear combination of Laurent monomials in $\sigma$ times elements of $\V$ and replace each $\v_p$ in the expression by $\sum_{r\in N_\uf^{0+}}d^{(p)}_r\,\sigma^r\u_{p+rB}$.
The result is an expression for $\w$ as a convergent linear combination of Laurent monomials in $\sigma$ times elements of~$\U$.  
The argument that such an expression is unique is the same as in Proposition~\ref{theta basis}, using the expressions $\u_m=z^mK_m$ in place of the expressions $\thet_m=z^mF_m$.
Thus $\U$ is a reduced basis.
We have shown that \ref{u point}, \ref{each point}, and \ref{u v} are all equivalent.

In light of Proposition~\ref{theta basis},~\ref{u v} implies~\ref{u thet}.
Specializing the argument above that~\ref{u v} implies~\ref{each point}, we see that~\ref{u thet} implies~\ref{each point}.
\end{proof}

\section{Mutation of theta functions}\label{mut sec}
In this section, we discuss the notion of ``mutating'' theta functions.
Crucially, \textbf{throughout Section~\ref{mut sec}, we assume that $\tB$ has signed-nondegenerating coefficients}.

There are several notions of what it should mean to ``mutate'' theta functions.  
The key technical observation underlying this paper describes the relationship between two of them.

The first, and simpler, notion is to compute theta functions relative to $\Scat(\tB)$, pass from $\tB$ to $\mu_k(\tB)$, otherwise keeping everything the same, and then separately compute theta functions relative to $\Scat(\mu_k(\tB))$.
Here is a more precise description of ``otherwise keeping everything the same'':
If we have constructed the underlying data from $\tB$ as described near the end of Section~\ref{context sec}, then we construct the underlying data again from $\mu_k(\tB)$, \emph{using the same integers $d_i$ and the same symbols $e_i$ and $e^*_i$}.
Thus, all of the data in Section~\ref{context sec} is the same, except for the entries $\epsilon_{ij}$ and the skew-symmetric bilinear form $\set{\,\cdot\,,\,\cdot\,}$.

\begin{remark}\label{our mut their mut}
In this paper, we will not use the notion of mutation in \cite{GHKK}, which differs from the notion described above because different things change and different things stay the same.
In \cite{GHKK}, everything stays the same, including $\set{\,\cdot\,,\,\cdot\,}$, except for $\tB$ and the choice of bases $(e_i:i\in I)$ for $N$ and $(f_i:i\in I)$ for~$M^\circ$.
A new basis $(e'_i:i\in I)$ is chosen for $N$ such that $\set{e'_i,d_je'_j}$ are the entries of the mutated matrix (and then $f'_i$ is defined to be $d_i^{-1}(e')^*_i$).
The mutation results that we quote and prove here have analogs in \cite{GHKK} for this different notion of mutation.
\end{remark}

The second notion of mutation of theta functions uses mutation of cluster variables as follows.
The triple $(B,(z_i:i\in I_\uf),(\sigma_i:i\in I_\uf))$ constitutes a seed of geometric type in the sense of cluster algebras \cite{ca4}.
(However, to conform to the conventions of \cite{GHKK} as already described, we continue with a ``wide'' extended exchange matrix $\tB=[\epsilon_{ij}]_{i\in I_\uf,j\in I}$ rather than a ``tall'' one.  
Thus the construction is related to the constructions in \cite{ca4} by a global transpose.)

Mutation at $k$ creates a new seed $(\mu_k(B),(z'_i:i\in I_\uf),(\sigma'_i:i\in I_\uf))$.
The relationship between the two seeds is given by matrix mutation and the exchange relation.  
As part of the assumption that $\tB$ has signed-nondegenerating coefficients, each $\sigma_i$ has a $\sgn(\sigma_i)\in\set{\pm1}$, and similarly each $\sigma'_i$ has a sign, which simplifies the relationships between primed and unprimed quantities to the equations below.
One may take Equation \eqref{exch rel} to be the definition of the exchange relation in this case, or refer to \cite[(3.11)]{ca4}.
The other relations below follow from the definition of matrix mutation.
(Compare \mbox{\cite[Proposition~3.9]{ca4}} and its proof.)
\begin{align}
\label{exch rel}
z_kz'_k&=(1+\zeta_k)\sigma_k^{-[-\sgn(\sigma_k)]_+}\prod_{j\in I_\uf}z_j^{[-\epsilon_{kj}]_+}\\
z_i'&=z_i\quad\text{for }i\in I_\uf\setminus\set{k}\\
\sigma'_k&=\sigma_k^{-1}\\
\sigma'_i&=\sigma_i(\sigma_k)^{[\sgn(\sigma_k)\epsilon_{ik}]_+}\quad\text{for }i\in I_\uf\setminus\set{k}\\
\zeta'_k&=\zeta_k^{-1}\\
\label{hy mut}
\zeta'_i&=\zeta_i(\zeta_k)^{[\epsilon_{ik}]_+}(1+\zeta_k)^{-\epsilon_{ik}}\quad\text{for }i\in I_\uf\setminus\set{k}.
\end{align}

The second notion of mutation of theta functions is to use the above relations between primed and unprimed variables to write each theta function relative to $\Scat(\tB')$ in terms of the unprimed variables.
We will prove the following theorem relating the two notions.

\begin{theorem}\label{2 muts}
Suppose $\tB$ has signed-nondegenerating coefficients and, for some $k\in I_\uf$, write $\tB'$ for $\mu_k(\tB)$.
For $m\in M^\circ_\uf$, write $\thet_{m}^\tB$ for a theta function defined in the unprimed variables using $\tB$, write $m'=\eta_k^B(m)$, and write $\thet_{m'}^{\tB'}$ for a theta function defined in the primed variables using $\tB'$.
Relating the primed and unprimed variables as in \eqref{exch rel}--\eqref{hy mut}, we have  $\thet_{m'}^{\tB'}=\thet_{m}^\tB\cdot(\sigma_k)^{-[\sgn(\sigma_k)\br{m,d_ke_k}]_+}$.
\end{theorem}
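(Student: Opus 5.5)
The plan is to prove the statement through a broken-line bijection, which is the content the introduction attributes to Lemma~\ref{mut broken line}. First I will recall the mutation theorem for cluster scattering diagrams, Theorem~\ref{mut thm}. It says that $\Scat(\tB')$ is obtained from $\Scat(\tB)$ by applying the piecewise-linear map $\eta_k^B$ to every wall other than $e_k^\perp$, keeping $e_k^\perp$ with scattering term $1+\zeta'_k$, and rewriting each scattering term through \eqref{hy mut}. The map $\eta_k^B$ is linear on each half-space $\set{\br{x,e_k}\ge0}$ and $\set{\br{x,e_k}\le0}$. Fix $Q$ in the interior of the positive orthant $D$; since $\eta_k^B$ is the identity on $\set{\br{x,e_k}\ge0}$, the point $Q$ is fixed. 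Given a broken line $\bl$ for $m$ with endpoint $Q$ relative to $\Scat(\tB)$, I take $\bl'=\eta_k^B\circ\bl$ and relabel its domains.

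The relabeling works as follows. On a domain $L$ lying in a half-space where $\eta_k^B$ acts as a linear map $\phi$, the slope becomes $\phi(m_L)$. For the label I substitute \eqref{exch rel}--\eqref{hy mut} into $\const_Lz^{m_L}\sigma^{n_L}$ and rewrite it as a monomial in the primed variables. This should be done piecewise: on the side $\br{x,e_k}<0$ I use the expansion of $z_k$ in primed variables appropriate to that side, so that $1+\zeta_k$ factors are absorbed into the wall $e_k^\perp$. Bends of $\bl$ at walls other than $e_k^\perp$ transform into bends of $\bl'$ at the image walls, because wall-crossing automorphisms are conjugated by the linear piece of $\eta_k^B$ together with the variable change. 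This is the same computation that proves Theorem~\ref{mut thm}, namely the single wall-crossing computation mentioned for Proposition~\ref{mut theta}.

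The main obstacle is crossings of $e_k^\perp$ itself. At such a crossing, $\bl$ may bend by choosing a term of $(1+\zeta_k)^{\br{m_L,d_ke_k}}$, and $\bl'$ bends by choosing a term of $(1+\zeta'_k)^{\br{m'_L,d_ke_k}}$, where $\br{m'_L,d_ke_k}=-\br{m_L,d_ke_k}$ on the negative side. I would first check the identity $(1+\zeta_k)^a=\zeta_k^a(1+\zeta_k^{-1})^a=\zeta_k^a(1+\zeta'_k)^a$. This shows that choosing the $\zeta_k^j$ term for $\bl$ corresponds to choosing the $(\zeta'_k)^{a-j}$ term for $\bl'$, which gives a weight-preserving bijection of bends at $e_k^\perp$. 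The extra factor $\zeta_k^a$ is exactly what the piecewise change of $z$-variables across $e_k^\perp$ produces.

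For the final step I compare the end labels. Since $Q\in D$, the final domain lies on the positive side, where primed and unprimed $z$-monomials agree up to the $\sigma$ rescaling. Summing over the bijection gives $\thet^{\tB'}_{m'}=\thet^\tB_m\cdot(\sigma_k)^{-[\sgn(\sigma_k)\br{m,d_ke_k}]_+}$. The monomial factor should come from the unbounded domain: the label $z^m$ is rewritten in primed variables, and $z'^{m'}$ is compared with it. When $\br{m,e_k}$ has the sign for which $\eta_k^B$ acts nontrivially, this comparison yields a power of $\sigma_k$ coming from the term $\sigma_k^{-[-\sgn(\sigma_k)]_+}$ in \eqref{exch rel}. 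I would verify this exponent by a direct computation in the two cases $\sgn(\sigma_k)=\pm1$ and $\br{m,e_k}\gtrless0$. Signed-nondegenerating coefficients are used in this step to make the $\sigma$-substitutions unambiguous and consistent with Proposition~\ref{where prin}.
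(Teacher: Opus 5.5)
Your outline matches the paper's Lemma~\ref{mut broken line} in several respects: you transport broken lines by $\eta_k^B$, match bends at $e_k^\perp$ via $(1+\zeta_k)^a=\zeta_k^a(1+\zeta'_k)^a$, and read off the $\sigma_k$-factor from the unbounded domain. But there is a genuine gap: $\eta_k^B$ does not fix $Q$. From the matrix-mutation definition, $\eta_k^B$ sends $\sum a_if_i$ to $\sum a'_if_i$ with $a'_k=-a_k$ on \emph{both} half-spaces, and $a'_j=a_j+a_k[\epsilon_{kj}]_+$ when $a_k\ge0$. So for $Q$ in the interior of $D$, the point $\eta_k^B(Q)$ lies strictly on the negative side of $e_k^\perp$, in the chamber of $\Scat(\tB')$ adjacent to $D$ across $e_k^\perp$. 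Compare Example~\ref{theta g2 eta1 ex}, where $\eta_1^B([-2,3])=[2,-3]$. A map that is the identity on a half-space is GHKK's $T_k$, which belongs to the other notion of mutation (Remark~\ref{our mut their mut}); even there, the two positive chambers lie on opposite sides of $e_k^\perp$. Hence your bijection computes $\thet^{\tB'}_{\eta_k^B(Q),m'}$, not $\thet^{\tB'}_{m'}=\thet^{\tB'}_{Q,m'}$, and these really differ (Example~\ref{theta g2 eta1 replace ex}). The missing step is the change of endpoint \cite[Theorem~3.5]{GHKK}. A path from $\eta_k^B(Q)$ to $Q$ crosses only the wall $(e_k^\perp,1+\zeta'_k)$, so you must replace $z'_k$ by $z'_k(1+\zeta'_k)^{-1}$.

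The same issue breaks your relabeling and your final comparison. Substituting \eqref{exch rel}--\eqref{hy mut} into a label does not give a monomial, since the exchange relation carries the binomial $1+\zeta_k$. Also, on the positive side primed and unprimed $z$-monomials do \emph{not} agree up to a $\sigma$-rescaling. What works is a purely monomial substitution on each side:
\begin{itemize}
\item on the positive side, $z_k\mapsto(z'_k)^{-1}(\sigma'_k)^{-[\sgn(\sigma_k)]_+}\prod_{j\in I_\uf}(z'_j)^{[\epsilon_{kj}]_+}$;
\item on the negative side, $z_k\mapsto(z'_k)^{-1}(\sigma'_k)^{[-\sgn(\sigma_k)]_+}\prod_{j\in I_\uf}(z'_j)^{[-\epsilon_{kj}]_+}$.
\end{itemize}
These two differ by the factor $\zeta'_k$, which is what your binomial identity at $e_k^\perp$ absorbs. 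They are combined with the $\sigma$-substitutions of Proposition~\ref{mut theta} and a global factor $(\sigma'_k)^{[\sgn(\sigma_k)\br{m,d_ke_k}]_+}$ that makes the unbounded label equal to $(z')^{m'}$. Note that this exponent can be nonzero on either side of $e_k^\perp$, depending on $\sgn(\sigma_k)$, not only on ``the side where $\eta_k^B$ acts nontrivially.'' Only after the endpoint change does the positive-side substitution pick up the factor $1+\zeta'_k$ and become the exchange-relation substitution of Proposition~\ref{mutate subs}. That is the only source of the binomial in \eqref{exch rel}, and comparing with Proposition~\ref{mutate subs} then gives the theorem.
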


In interpreting Theorem~\ref{2 muts}, it is useful to remember that $\sigma'_k=\sigma_k^{-1}$, so that the conclusion of the theorem is equivalent to $\thet_{m'}^{\tB'}=\thet_{m}^\tB\cdot(\sigma'_k)^{[\sgn(\sigma_k)\br{m,d_ke_k}]_+}$ or equivalently $\thet_{m}^{\tB}=\thet_{m'}^{\tB'}\cdot(\sigma'_k)^{-[\sgn(\sigma_k)\br{m,d_ke_k}]_+}$.

Before proving Theorem~\ref{2 muts}, we point out a corollary, which is immediate by induction on the length of a sequence~$\kk$.
\begin{corollary}\label{mut can}
Suppose $\tB$ has signed-nondegenerating coefficients.
For any sequence $\kk$ of indices in $I_\uf$, if we identify indeterminates related to $\tB$ with indeterminates related to $\mu_\kk(\tB)$ by iterations of \eqref{exch rel}--\eqref{hy mut}, then $\can(\mu_\kk(\tB))=\can(\tB)$.
\end{corollary}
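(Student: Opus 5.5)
The plan is to derive Corollary~\ref{mut can} from Theorem~\ref{2 muts} by induction on the length of $\kk$. The empty sequence is trivial, so the real content is the case of a single mutation $\kk=k$, after which the induction step is formal. For the induction step I would write $\kk=k_q\kk_0$ and assume $\can(\mu_{\kk_0}(\tB))=\can(\tB)$ under the iterated identifications. Since $\tB$ has signed-nondegenerating coefficients, so does $\mu_{\kk_0}(\tB)$: every further mutation of it is a mutation of $\tB$. So the single-step case applies to $\mu_{\kk_0}(\tB)$ and the index $k_q$, and the identification of indeterminates is by construction the composition of the single-step identifications.

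For a single mutation, write $\tB'=\mu_k(\tB)$. Both $\tB$ and $\tB'$ have nondegenerate coefficients and hence linearly independent rows, so both small canonical algebras are defined. I would first check that the ambient rings agree. By \eqref{exch rel}--\eqref{hy mut}, the primed quantities $z'_i$, $\sigma'_i$ and $\zeta'_i$ are expressed through the unprimed ones; these are the expressions in which Theorem~\ref{2 muts} is already interpreted. In particular, the Laurent polynomial rings agree, $\k[\sigma^{\pm1}]=\k[(\sigma')^{\pm1}]$. This holds because $\sigma'_k=\sigma_k^{-1}$ and $\sigma'_i=\sigma_i\sigma_k^{a_i}$ for integers $a_i\ge0$, a unimodular change of generators; equivalently, it is the earlier remark that all the $\sigma^{(\kk)}$ span the same lattice.

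The generating sets also agree up to units. The map $m\mapsto\eta_k^B(m)$ is a bijection of $M^\circ_\uf$: it is a piecewise-linear homeomorphism whose inverse is $\eta_k^{\mu_k(B)}$, and it is integral. Theorem~\ref{2 muts} then gives $\thet^{\tB'}_{\eta_k^B(m)}=\thet^{\tB}_m\cdot\sigma_k^{-c_m}$ for an integer $c_m\ge0$. So every generator $\thet^{\tB'}_{m'}$ of $\can(\tB')$ is a unit of $\k[\sigma^{\pm1}]$ times a generator of $\can(\tB)$, and conversely. Since both algebras are defined as the $\k[\sigma^{\pm1}]$-subalgebra generated by their theta functions, each contains the other's generators and coefficient ring, hence $\can(\tB')=\can(\tB)$.

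The main obstacle is making sense of ``$=$'' here: the two algebras are defined inside different rings, $\k[z^{\pm1}](\!(\zeta)\!)$ and $\k[z'^{\pm1}](\!(\zeta')\!)$. The identification \eqref{hy mut} involves $(1+\zeta_k)^{-\epsilon_{ik}}$, which is not a series in nonnegative powers of $\zeta_k$ once $\zeta'_k=\zeta_k^{-1}$ is used. So I would not compare the ambient rings directly. Instead I would take the statement to mean exactly what Theorem~\ref{2 muts} provides: substituting \eqref{exch rel}--\eqref{hy mut} carries each theta function for $\tB'$ to the corresponding theta function for $\tB$ times a monomial in $\sigma_k$. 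Because substitution is a ring homomorphism, it is multiplicative on the finite products and finite $\k[\sigma^{\pm1}]$-combinations that make up these algebras. The argument then needs only that these generated subalgebras correspond; it never needs an isomorphism of the completions.
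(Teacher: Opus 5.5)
Your proposal is correct and follows the paper's route: the paper says only that the corollary is immediate from Theorem~\ref{2 muts} by induction on the length of $\kk$. Your single-step argument fills in the details the paper leaves implicit, namely that the coefficient ring $\k[\sigma^{\pm1}]$ is unchanged, that $\eta_k^B$ permutes $M^\circ_\uf$, and that the theta functions agree up to units.
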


We now move to the proof of Theorem~\ref{2 muts}.
The proof consists of expressing both notions of mutation of theta functions in terms of formal symbolic operations and then comparing.
First, \eqref{exch rel}--\eqref{hy mut} can be rephrased as the following proposition.
(To get the correct replacement for $z_k$, it is useful to rewrite $(1+(\zeta_k')^{-1})$ as $(1+\zeta_k')(\zeta_k')^{-1}$.)

\begin{proposition}\label{mutate subs}
Suppose $\tB$ has signed-nondegenerating coefficients.
Let $\v$ be an expression in $(z_i:i\in I_\uf)$, $(\sigma_i:i\in I_\uf)$, and $(\zeta_i:i\in I_\uf)$.
Then $\v$ can be expressed in terms of $(z'_i:i\in I_\uf)$, $(\sigma'_i:i\in I_\uf)$, and $(\zeta'_i:i\in I_\uf)$ by simultaneously making the following substitutions:
\begin{alignat*}{2}
\text{replace }&z_k&&\text{ by}\quad \frac{1+\zeta'_k}{z'_k(\sigma'_k)^{[\sgn(\sigma_k)]_+}}\prod_{j\in I_\uf}(z'_j)^{[\epsilon_{kj}]_+}\\
\text{replace }&z_i&&\text{ by}\quad z'_i \quad\text{for }i\in I_\uf\setminus\set{k}\\
\text{replace }&\sigma_k&&\text{ by}\quad (\sigma_k')^{-1}\\
\text{replace }&\sigma_i&&\text{ by}\quad\sigma'_i(\sigma'_k)^{[\sgn(\sigma_k)\epsilon_{ik}]_+}  \quad\text{for }i\in I_\uf\setminus\set{k}\\
\text{replace }&\zeta_k&&\text{ by}\quad (\zeta_k')^{-1}\\
\text{replace }&\zeta_i&&\text{ by}\quad  \zeta_i'(\zeta'_k)^{[-\epsilon_{ik}]_+}(1+\zeta'_k)^{\epsilon_{ik}} \quad\text{for }i\in I_\uf\setminus\set{k}.
\end{alignat*}
\end{proposition}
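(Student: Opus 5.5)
The plan is to invert the relations \eqref{exch rel}--\eqref{hy mut} one at a time. Each substitution listed in the proposition expresses an unprimed quantity as an expression in primed quantities, so it suffices to check each one. Simultaneous substitution then rewrites any expression $\v$ in the primed variables, because substituting into a (Laurent) polynomial or series commutes with the ring operations. I will also use the mutation formula \eqref{b mut} for the entries $\epsilon'_{ij}$ of $\tB'=\mu_k(\tB)$. In particular, $\epsilon'_{kj}=-\epsilon_{kj}$ for all $j$, and $\sgn(\sigma'_k)=-\sgn(\sigma_k)$.

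The easy cases come first.
\begin{itemize}
\item The replacements $z_i\mapsto z'_i$ for $i\ne k$, $\sigma_k\mapsto(\sigma'_k)^{-1}$ and $\zeta_k\mapsto(\zeta'_k)^{-1}$ are immediate from the stated relations.
\item For $\sigma_i$ with $i\neq k$, the relation $\sigma'_i=\sigma_i\sigma_k^{[\sgn(\sigma_k)\epsilon_{ik}]_+}$ gives $\sigma_i=\sigma'_i\sigma_k^{-[\cdots]_+}=\sigma'_i(\sigma'_k)^{[\sgn(\sigma_k)\epsilon_{ik}]_+}$.
\item For $\zeta_i$ with $i\ne k$, \eqref{hy mut} gives
\[\zeta_i=\zeta'_i\,\zeta_k^{-[\epsilon_{ik}]_+}(1+\zeta_k)^{\epsilon_{ik}}.\]
Substituting $\zeta_k=(\zeta'_k)^{-1}$ and writing $1+(\zeta'_k)^{-1}=(1+\zeta'_k)(\zeta'_k)^{-1}$, this becomes $\zeta'_i(\zeta'_k)^{[\epsilon_{ik}]_+-\epsilon_{ik}}(1+\zeta'_k)^{\epsilon_{ik}}$. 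Since $[x]_+-x=[-x]_+$, this is the stated formula.
\end{itemize}

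The main step is $z_k$. Solve \eqref{exch rel} to get
\[z_k=(z'_k)^{-1}(1+\zeta_k)\,\sigma_k^{-[-\sgn(\sigma_k)]_+}\prod_j z_j^{[-\epsilon_{kj}]_+}.\]
Then substitute the primed expressions:
\begin{itemize}
\item $1+\zeta_k=(1+\zeta'_k)(\zeta'_k)^{-1}$;
\item $\sigma_k=(\sigma'_k)^{-1}$;
\item $z_j=z'_j$ for $j\ne k$, and the $j=k$ factor has exponent $[-\epsilon_{kk}]_+=0$.
\end{itemize}
The leftover factor $(\zeta'_k)^{-1}$ has to be expanded using the definition $\zeta'_k=\prod_{j\in I}(z_j')^{\epsilon'_{kj}}$. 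Equivalently, $\zeta'_k=\sigma'_k\prod_{j\in I_\uf}(z'_j)^{-\epsilon_{kj}}$, using $\epsilon'_{kj}=-\epsilon_{kj}$ and the fact that frozen variables are unchanged. So
\[(\zeta'_k)^{-1}=(\sigma'_k)^{-1}\prod_j (z'_j)^{\epsilon_{kj}}.\]
Combining with $\prod_j (z'_j)^{[-\epsilon_{kj}]_+}$ gives $\prod_j(z'_j)^{[\epsilon_{kj}]_+}$, since $x+[-x]_+=[x]_+$.

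The $\sigma'_k$-exponents then combine as $-1+[-\sgn(\sigma_k)]_+=-[\sgn(\sigma_k)]_+$, because $\sgn(\sigma_k)=\pm1$. This matches the factor $(\sigma'_k)^{-[\sgn(\sigma_k)]_+}$ in the stated replacement.

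This sign and exponent bookkeeping in the $z_k$ case is the only delicate point. I would double-check it against a rank-one example with principal coefficients, $\sigma_k=z_{\pi(k)}$.
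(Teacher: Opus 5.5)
Your proof is correct and is essentially the paper's argument: the paper presents the proposition as a direct rephrasing (inversion) of \eqref{exch rel}--\eqref{hy mut}, and its only hint is the rewriting $1+(\zeta'_k)^{-1}=(1+\zeta'_k)(\zeta'_k)^{-1}$ that you use in both the $z_k$ and $\zeta_i$ cases. Your exponent bookkeeping checks out: $x+[-x]_+=[x]_+$, $[x]_+-x=[-x]_+$, and $-1+[-\sgn(\sigma_k)]_+=-[\sgn(\sigma_k)]_+$.
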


Our next goal is to prove the following proposition.

\begin{proposition}\label{mut theta}
Continuing hypotheses and notation from Theorem~\ref{2 muts}, $\thet_{m'}^{\tB'}$ is obtained from $\thet^\tB_m$ by substitution and then multiplication, as follows:
First, simultaneously make the following substitutions:
\begin{alignat*}{2}
\text{replace }&z_k&&\text{ by}\quad \frac{1+\zeta'_k}{z'_k(\sigma'_k)^{[\sgn(\sigma_k)]_+}}\prod_{j\in I_\uf}(z'_j)^{[\epsilon_{kj}]_+}\\
\text{replace }&z_i&&\text{ by}\quad z'_i \quad\text{for }i\in I_\uf\setminus\set{k}\\
\text{replace }&\sigma_k&&\text{ by}\quad (\sigma_k')^{-1}\\
\text{replace }&\sigma_i&&\text{ by}\quad\sigma'_i(\sigma'_k)^{[\sgn(\sigma_k)\epsilon_{ik}]_+}  \quad\text{for }i\in I_\uf\setminus\set{k}.
\end{alignat*}
Then multiply by $(\sigma'_k)^{[\sgn(\sigma_k)\br{m,d_ke_k}]_+}$.
\end{proposition}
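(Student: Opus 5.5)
The plan is to compare broken lines directly, via the mutation of the cluster scattering diagram (Theorem~\ref{mut thm}). That theorem says $\Scat(\tB')$ is obtained from $\Scat(\tB)$ by applying $\eta_k^B$ to each wall, keeping walls off $e_k^\perp$ and adjusting their scattering terms by the substitution \eqref{hy mut}, with the wall $(e_k^\perp,1+\zeta_k)$ replaced by $(e_k^\perp,1+\zeta'_k)$.

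Step one is to fix the base point. Choose $Q$ in the interior of $D$ but very close to $e_k^\perp$, on a side where $\eta_k^B$ is linear. Since $\eta_k^B$ is the identity on the half-space $\br{\cdot,e_k}\ge0$, we have $\eta_k^B(Q)=Q$. As $\thet^{\tB'}$ is computed with endpoint in $D$, the same $Q$ serves for both theta functions.

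Step two (the key lemma, Lemma~\ref{mut broken line}) builds a bijection $\bl\mapsto\eta_k^B\circ\bl$ between broken lines for $m$ in $\Scat(\tB)$ and broken lines for $m'=\eta_k^B(m)$ in $\Scat(\tB')$, both ending at $Q$. The bijection must be modified near each crossing of $e_k^\perp$, because $\eta_k^B$ bends straight segments there. On each domain lying in a half-space where $\eta_k^B$ is linear, the new domain has slope $-\eta_k^B$ applied (linearly) to the old slope. So one checks that the labels transform as follows:
\begin{itemize}
\item on the side $\br{\cdot,e_k}>0$ the label $c\,z^{m_L}\sigma^{n_L}$ becomes its image under the monomial part of the substitutions;
\item on the other side one must account for the linear map $\eta_k^B$ differing by the shear $x\mapsto x+[\cdot]\,$ of the $k$-th row.
\end{itemize}

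At bends on walls off $e_k^\perp$, the contributions correspond term by term. This holds because the wall functions transform by \eqref{hy mut}, and the exponent $\br{m_L,n^\circ}$ is preserved under the appropriate linear map (the map is compatible with the bilinear pairing after mutating $B$).

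At crossings of $e_k^\perp$ the wall factor is $(1+\zeta_k)^{\br{m_L,d_ke_k}}$ versus $(1+\zeta'_k)^{\pm\cdots}$. Here the factor $(1+\zeta'_k)$ appearing in the replacement for $z_k$ exactly accounts for the bend. I expect this wall-crossing computation to be the main obstacle. It requires expanding the single wall-crossing automorphism and matching the binomial terms of a bend on $e_k^\perp$ in $\Scat(\tB)$ against the combination of "substitute $z_k$, then expand $(1+\zeta'_k)^{\text{power}}$" on the other side. A bent-or-not broken line in one diagram corresponds to one whose bend is shifted by the binomial reflection $j\mapsto \br{m_L,d_ke_k}-j$.

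Step three handles the leftover monomial. Summing the endpoint labels, the substitution recovers $\thet^{\tB'}_{m'}$ up to a global monomial in $\sigma'_k$. That monomial comes from the factor $(\sigma'_k)^{[\sgn(\sigma_k)]_+}$ in the replacement of $z_k$ and from the sign convention $\sigma_k'=\sigma_k^{-1}$. I would compute it on the unbent broken line, whose label is $z^m$. Substituting yields $z'^{m'}$ times $(1+\zeta'_k)^{\br{m,d_ke_k}}$ (nonnegative part) times $(\sigma'_k)^{-[\sgn(\sigma_k)\br{m,d_ke_k}]_+}$. This pins the correction factor to $(\sigma'_k)^{[\sgn(\sigma_k)\br{m,d_ke_k}]_+}$. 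Since every label in a broken line is $z^m$ times a monomial in $\zeta$, and $\zeta$'s transform without $\sigma'_k$-prefactors, the same factor works for all terms.
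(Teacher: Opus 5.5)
Your proposal has a genuine gap, and it starts with Step one. The claim that $\eta_k^B$ is the identity on $\set{\br{\cdot,e_k}\ge0}$ is false. For $v=\sum_i a_if_i$ with $a_k\ge0$, matrix mutation gives $\eta_k^B(v)=-a_kf_k+\sum_{j\neq k}(a_j+a_k[\epsilon_{kj}]_+)f_j$, so the $k$-th coordinate is negated on both halves. Hence for $Q$ in the interior of $D$, the point $Q'=\eta_k^B(Q)$ lies strictly on the negative side of $e_k^\perp$, outside $D$ (compare Figure~\ref{theta g2 eta1 fig}).

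Consequently the correspondence $\bl\mapsto\eta_k^B\circ\bl$ of Lemma~\ref{mut broken line} relates $\thet^{\tB}_{Q,m}$ to $\thet^{\tB'}_{Q',m'}$, not to $\thet^{\tB'}_{m'}=\thet^{\tB'}_{Q,m'}$. The relabeling it induces is purely monomial and side-dependent. On the domain at the endpoint, which is on the positive side, it is $z_k\mapsto(z'_k)^{-1}(\sigma'_k)^{-[\sgn(\sigma_k)]_+}\prod_j(z'_j)^{[\epsilon_{kj}]_+}$, with no factor $1+\zeta'_k$.

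So your attribution of $1+\zeta'_k$ to bends on $e_k^\perp$ in the interior of a broken line is mistaken. Those crossings are absorbed inside the lemma by the binomial reflection $\nu\mapsto\lambda-\nu$, with monomial labels on both sides. More fundamentally, no correspondence of individual broken lines ending at $Q$ can by itself give the proposition, because its substitution is not monomial. In Example~\ref{theta g2 eta1 replace ex}, substituted terms carry $(1+\zeta'_1)^{-2}$ and $(1+\zeta'_1)^{-1}$, and these only collapse after summing.

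The missing ingredient is a change of endpoint. By \cite[Theorem~3.5]{GHKK}, theta functions at different endpoints are related by path-ordered products. A path from $Q'$ to $Q$ crosses only the wall $(e_k^\perp,1+\zeta'_k)$ of $\Scat(\tB')$, whose automorphism is $z'_k\mapsto z'_k(1+\zeta'_k)^{-1}$; it fixes $z'_j$ for $j\neq k$, $\zeta'_k$, and all $\sigma'_i$. Composing this with the positive-side monomial substitution above produces exactly the proposition's replacement for $z_k$.

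Step three also needs repair. The post-multiplier $(\sigma'_k)^{[\sgn(\sigma_k)\br{m,d_ke_k}]_+}$ is what makes the unbounded domain of $\eta_k^B\circ\bl$ carry the label $(z')^{m'}$ under the side-dependent substitution. Reading it off from the unbent broken line at $Q$ under the full substitution fails when $\br{m,d_ke_k}<0$. In that case the unbent line contributes $(z')^{m'}\bigl(\zeta'_k/(1+\zeta'_k)\bigr)^{-\br{m,d_ke_k}}$, which has no constant term and so does not carry the leading term.
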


The combination of Propositions~\ref{mut theta} and~\ref{mutate subs} immediately implies Theorem~\ref{2 muts}.
Specifically, we use the fact that $\zeta^n=z^{nB}\sigma^n$ for all $n\in N_\uf$ to write $\thet_m^{\tB}$ in terms of the symbols $z$ and $\sigma$ only, without the symbols $\zeta$.
Then, using Proposition~\ref{mutate subs} to write $\thet_m^\tB$ in terms of the symbols $z'$ and $\sigma'$ and comparing with Proposition~\ref{mut theta}, we see that $\thet_{m'}^{\tB'}=\thet_m^{\tB}\cdot(\sigma'_k)^{[\sgn(\sigma_k)\br{m,d_ke_k}]_+}$.

We now proceed to prove Proposition~\ref{mut theta}.
We begin by quoting \cite[Theorem~4.2]{scatfan}, which is a reinterpretation of \cite[Theorem~1.24]{GHKK}.
The theorem explains how to construct the cluster scattering diagram $\Scat(\mu_k(\tB))$ in terms of $(z'_i:i\in I_\uf)$ and $(\zeta'_i:i\in I_\uf)$, given the cluster scattering diagram for $\tB$ in terms of $(z_i:i\in I_\uf)$ and $(\zeta_i:i\in I_\uf)$.
(The difference between the result in \cite{scatfan} and the result in \cite{GHKK} is that, in \cite{scatfan}, a linear map is applied to change bases of $V^*$.)

Up to equivalence of scattering diagrams, we can assume that there is no wall $(\d,f_\d(\zeta^n))$ of $\Scat(\tB)$ such that $\d$ contains points strictly on opposite sides of the hyperplane $e_k^\perp$ in $V^*$.
Thus, we can apply the piecewise-linear map $\eta_k^B$ to whole walls, because it is linear on each halfspace defined by $e_k^\perp$.
The following is \cite[Theorem~4.2]{scatfan}.

\begin{theorem}\label{mut thm}
$\Scat(\mu_k(\tB))$ (in the primed variables) is obtained from $\Scat(\tB)$ (in the unprimed variables) by changing the wall $(e_k^\perp,1+\zeta_k)$ to $(e_k^\perp,1+\zeta'_k)$ and altering each other wall $(\d,f_\d(\zeta^n))$ as follows:
Replace $\d$ by $\eta_k^B(\d)$ and make the following substitutions in $f_\d(\zeta^n)$: 
\begin{alignat*}{2}
\text{replace }&\zeta_k&&\text{ by}\quad (\zeta'_k)^{-1}\\
\text{replace }&\zeta_i&&\text{ by}\quad  
\begin{cases}
\zeta_i'(\zeta'_k)^{[\epsilon_{ik}]_+} &\text{if }\d\subseteq \set{p\in V^*:\br{p,e_k}\le0}\\
\zeta_i'(\zeta'_k)^{[-\epsilon_{ik}]_+} &\text{if }\d\subseteq \set{p\in V^*:\br{p,e_k}\ge0}
\end{cases}
\quad\text{for }i\in I_\uf\setminus\set{k}.
\end{alignat*}
\end{theorem}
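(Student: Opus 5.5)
Since this is \cite[Theorem~4.2]{scatfan}, which reinterprets \cite[Theorem~1.24]{GHKK}, my plan is to deduce it from \cite[Theorem~1.24]{GHKK} by a change of basis. I would not reprove consistency from scratch. Recall from Remark~\ref{our mut their mut} how the two notions differ. In \cite{GHKK} the form $\set{\,\cdot\,,\,\cdot\,}$ is fixed and the basis of $N$ changes to $(e'_i)$, with $e'_k=-e_k$ and $e'_i=e_i+[\epsilon_{ik}]_+e_k$ for $i\neq k$. Here the basis is fixed and the form changes. So the first step is to write down the linear isomorphism $\phi:N\to N$ sending $e'_i\mapsto e_i$, and its dual $\phi^*$ on $M^\circ$ and on $V^*$. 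By construction, $\phi$ carries the form whose matrix in the basis $(e'_i)$ is $\mu_k(\tB)$ to the form built from $\mu_k(\tB)$ in the original basis. It therefore carries the \cite{GHKK} data for $\mu_k(\tB)$ to our data for $\mu_k(\tB)$, and it induces the identification of $z^{\phi^*(m)}$ with the primed monomials.

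Next I would recall the statement of \cite[Theorem~1.24]{GHKK}. It says that $\Scat(\mu_k(\tB))$, in the mutated \cite{GHKK} data, is obtained from $\Scat(\tB)$ as follows: delete the wall on $e_k^\perp$, apply the linear map $T_k$ on the half-space $\set{\br{p,e_k}\ge0}$ and the identity on $\set{\br{p,e_k}\le0}$ (or the reverse, depending on conventions), and then reinsert the wall $(e_k^\perp,1+\zeta_k^{-1})$. Here $T_k(m)=m+\br{m,d_ke_k}\,\{e_k,\cdot\}$-type shear, and the scattering terms are carried along by $z^m\mapsto z^{T_k(m)}$ on the moved half.

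The key computational step is to compose these with $\phi^*$ and check two things.
\begin{enumerate}[label=\rm(\alph*), ref=(\alph*)]
\item The piecewise-linear map $\phi^*\circ(T_k\text{ or }\mathrm{id})$ on each half-space is exactly $\eta_k^B$. This follows by comparing with the mutation rule \eqref{b mut} applied to an appended coefficient row, separately for $\br{p,e_k}\ge0$ and $\br{p,e_k}\le0$.
\item Expressed in primed variables, the induced substitution on $\zeta_i=z^{\sum_j\epsilon_{ij}f_j}$ is $\zeta_k\mapsto(\zeta'_k)^{-1}$, and $\zeta_i\mapsto\zeta'_i(\zeta'_k)^{[\pm\epsilon_{ik}]_+}$ on the two sides. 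Here I would use $\zeta^{e_i}=\zeta'^{\,e'_i}$-type identities together with $e_i=e'_i+[\epsilon_{ik}]_+e'_k$ (and its variant with $-\epsilon_{ik}$, obtained after applying $T_k$).
\end{enumerate}
Because the scattering terms of walls off $e_k^\perp$ are power series in $\zeta^n$, with $n\perp\d$ mapped to the normal of $\eta_k^B(\d)$, these checks give the stated rule. The assumption that no wall straddles $e_k^\perp$ makes "apply $\eta_k^B$ to whole walls" legitimate. Finally, the reinserted wall $(e_k^\perp,1+\zeta_k^{-1})$ becomes $(e_k^\perp,1+\zeta'_k)$.

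I also need to confirm that dropping the frozen dimensions (Remark~\ref{useless dimensions}) is compatible with these steps. Walls of the form $\d\oplus V^*_\fr$ map to walls of the same form, because $T_k$ and $\phi^*$ preserve the frozen span modulo $V^*$. Hence the restriction to $V^*$ commutes with everything above.

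The main obstacle I expect is the sign and convention bookkeeping: which half-space moves, and whether $[\epsilon_{ik}]_+$ or $[-\epsilon_{ik}]_+$ appears on each side. These depend on the wide versus tall transpose convention and on $d_k$ versus $d_i$ scaling, since $\epsilon_{ik}=\set{e_i,d_ke_k}$. I would pin them down first on a rank-two example, such as Example~\ref{theta g2 ex}, before doing the general check.

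As a fallback in case a convention mismatch resists resolution, I would argue directly by uniqueness of $\Scat(\mu_k(\tB))$. I would verify three things for the transformed diagram:
\begin{enumerate}[label=\rm(\roman*), ref=(\roman*)]
\item It is consistent. Loops avoiding $e_k^\perp$ are handled by the linear ring isomorphism on each half. For loops crossing $e_k^\perp$, the two half-space substitutions differ precisely by conjugation by the wall-crossing automorphism of $1+\zeta_k$, using $\zeta_i\mapsto\zeta_i(1+\zeta_k)^{\mp\epsilon_{ik}}$ from \eqref{p def z}.
\item It contains the required initial walls, up to equivalence.
\item All its remaining walls are outgoing, since $\eta_k^B(nB)$ corresponds to $n'\mu_k(B)$.
\end{enumerate}
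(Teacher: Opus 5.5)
Your main route is correct and is essentially the paper's. The paper gives no independent argument: it quotes the result as \cite[Theorem~4.2]{scatfan}, which is \cite[Theorem~1.24]{GHKK} transported by the linear change of basis of $V^*$ that sends the mutated basis vectors $f'_i$ to $f_i$. That map is the inverse transpose of your $\phi$, not its transpose, and it fixes the frozen span pointwise, so passing to $V^*$ is harmless. The conventions resolve to your first option: $T_k$ acts on $\set{p:\br{p,e_k}\ge0}$ and the identity acts on the other half. With that choice, your checks (a) and (b) give exactly $\eta_k^B$ and the stated substitutions, so the fallback is not needed.
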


\begin{example}\label{theta g2 eta1 ex}
This example continues Example~\ref{theta g2 ex}.
We take $B=\begin{bsmallmatrix*}[r]\,\,0&\,\,-3\\1&\,\,0\end{bsmallmatrix*}$ and $\tB$ as before and consider mutation in position~$1$, so that $\mu_1(B)=-B=\begin{bsmallmatrix*}[r]\,\,0&\,\,3\\-1&\,\,0\end{bsmallmatrix*}$.
The black lines and black labels in Figure~\ref{theta g2 eta1 fig} show $\Scat(\mu_1(\tB))$, in primed variables, for~$\tB$ as in Example~\ref{theta g2 ex}.
\begin{figure}[p]
\scalebox{0.85}{
\includegraphics{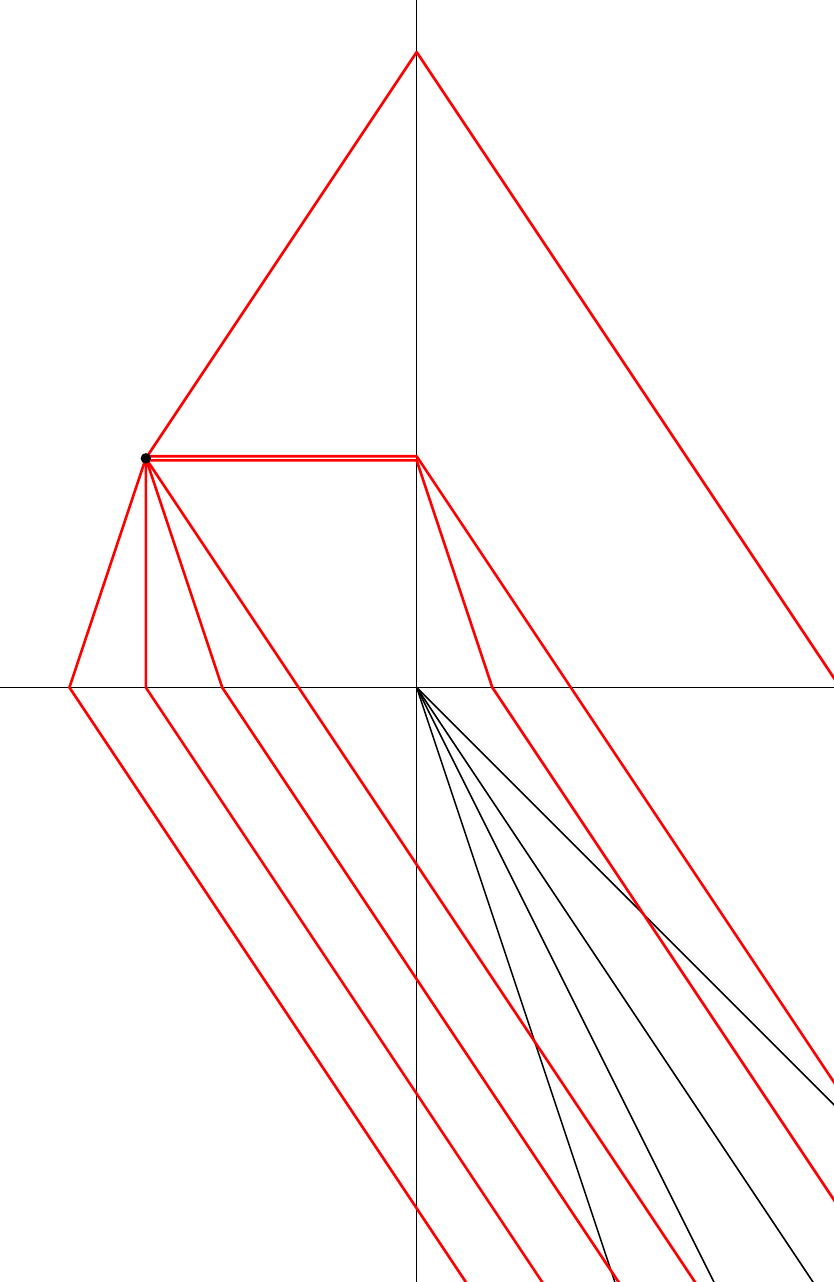}
\begin{picture}(0,0)(200,-285)
\put(-1,323){$1+\zeta'_1$}
\put(-200,-10){$1+\zeta'_2$}
\put(-33,-282){$1+\zeta'_1$}
\put(168,-10){$1+\zeta'_2$}

\put(102,158){\textcolor{red}{\rotatebox{-56}{$(z'_1)^2(z'_2)^{-3}$}}}

\put(44,-8){\textcolor{red}{\rotatebox{-56}{$(z'_1)^2(z'_2)^{-3}$}}}
\put(82,-8){\textcolor{red}{\rotatebox{-56}{$(z'_1)^2(z'_2)^{-3}$}}}

\put(-2,-80){\textcolor{red}{\rotatebox{-56}{$(z'_1)^2(z'_2)^{-3}$}}}
\put(-2,-135){\textcolor{red}{\rotatebox{-56}{$(z'_1)^2(z'_2)^{-3}$}}}
\put(-2,-190){\textcolor{red}{\rotatebox{-56}{$(z'_1)^2(z'_2)^{-3}$}}}
\put(-2,-245){\textcolor{red}{\rotatebox{-56}{$(z'_1)^2(z'_2)^{-3}$}}}

\put(-96,158){\textcolor{red}{\rotatebox{56}{$(\sigma'_1)^2(z'_1)^2(z'_2)^3$}}}

\put(-80,118){\textcolor{red}{$2\sigma'_1(z'_1)^2$}}

\put(18,55){\textcolor{red}{\rotatebox{-71.5}{$3\sigma'_2z'_1(z'_2)^{-3}$}}}

\put(-80,98){\textcolor{red}{$3\sigma'_1\sigma'_2z'_1$}}

\put(-112,56){\textcolor{red}{\rotatebox{-71.5}{$3\sigma'_2z'_1(z'_2)^{-3}$}}}
\put(-132,10){\textcolor{red}{\rotatebox{90}{$3(\sigma'_2)^2(z'_2)^{-3}$}}}
\put(-167,5){\textcolor{red}{\rotatebox{71.5}{$(\sigma'_2)^3(z'_1)^{-1}(z'_2)^{-3}$}}}

\put(78,-235){\rotatebox{-71.5}{$1+\zeta'_1\zeta'_2$}} 
\put(109,-215){\rotatebox{-63}{$1+(\zeta'_1)^2(\zeta'_2)^3$}} 
\put(155,-230){\rotatebox{-56}{$1+\zeta'_1(\zeta'_2)^2$}} 
\put(154,-173){\rotatebox{-45}{$1+\zeta'_1(\zeta'_2)^3$}} 

\put(-165,114){$\eta_1^B(Q)$}
\end{picture}
}
\caption{$\Scat(\mu_1(\tB))$ and broken lines for $\thet^{\mu_1(\tB)}_{\eta_1^B(Q),\eta_1^B([-2,3])}$}\label{theta g2 eta1 fig}
\end{figure}
The broken lines in the figure are explained in Example~\ref{theta g2 eta1 broken ex}.
\end{example}

The following simple fact will be useful in applying Theorem~\ref{mut thm}.
Recall that for a primitive element $n\in N$, the primitive element of $N^\circ$ that is a positive multiple of $n$ is written as $n^\circ$.

\begin{lemma}\label{transforming coroots}  
Suppose $n$ is a primitive element of $N_\uf$. 
Starting with $\zeta^n$, replace $\zeta_k$ by $(\zeta_k)^{-1}$ and each $\zeta_i$ by $\zeta_i(\zeta_k)^{[\pm \epsilon_{ik}]_+}$ as in Theorem~\ref{mut thm}, and call the result $(\zeta)^{n'}$, so that $n'=n-2\br{e^*_k,n}e_k+\sum_{i\in I_\uf}[\pm\epsilon_{ik}]_+\br{e^*_i,n}e_k$.
Then $n'$ is a primitive element of $N_\uf$ and $(n')^\circ=n^\circ-2\br{f_k,n^\circ}d_ke_k+\sum_{i\in I_\uf}\br{f_i,n^\circ}[\mp \epsilon_{ki}]_+d_ke_k$.
\end{lemma}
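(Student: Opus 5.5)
The plan is to treat the substitution as a linear map on exponent vectors and check that it is an integral involution, first on $N_\uf$ and then on $N^\circ\cap N_\uf$. Primitivity and the formula for $(n')^\circ$ should both follow from that.

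First I will verify the stated formula for $n'$. Write $n=\sum_{i\in I_\uf}a_ie_i$ with $a_i=\br{e^*_i,n}$, so $\zeta^n=\prod_i\zeta_i^{a_i}$. Performing the substitution turns $\zeta_k^{a_k}$ into $\zeta_k^{-a_k}$ and each $\zeta_i^{a_i}$ with $i\neq k$ into $\zeta_i^{a_i}\zeta_k^{a_i[\pm\epsilon_{ik}]_+}$. Since $\epsilon_{kk}=0$, the sum may harmlessly include $i=k$. This gives $n'=n-2a_ke_k+\sum_i[\pm\epsilon_{ik}]_+a_ie_k$.

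The map $\phi:n\mapsto n'$ is $\integers$-linear on $N_\uf$. It fixes every coordinate except the $k$-th, which becomes $-a_k+c$, where $c=\sum_{i\neq k}[\pm\epsilon_{ik}]_+a_i$ depends only on the other coordinates. Applying $\phi$ twice therefore returns $a_k$, so $\phi$ is an involution and hence an automorphism of the lattice $N_\uf$. Automorphisms preserve primitivity, so $n'$ is primitive in $N_\uf$. Because $N_\uf$ is spanned by a subset of the basis of $N$, it is saturated in $N$, so $n'$ is also primitive in $N$.

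For $(n')^\circ$, write $n^\circ=\lambda n$ with $\lambda>0$, and set $b_i=\br{f_i,n^\circ}=\lambda a_i/d_i$. These are integers, and $n^\circ=\sum_ib_id_ie_i$ is primitive in $N^\circ$. By linearity, $\lambda n'=\phi(n^\circ)=n^\circ-2b_kd_ke_k+\sum_ib_id_i[\pm\epsilon_{ik}]_+e_k$.

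The key step, and the only real bookkeeping hazard, is rewriting $d_i[\pm\epsilon_{ik}]_+$ using skew-symmetrizability. From $d_i\epsilon_{ik}=-d_k\epsilon_{ki}$ and $d_i>0$ we get $d_i[\pm\epsilon_{ik}]_+=[\pm d_i\epsilon_{ik}]_+=[\mp d_k\epsilon_{ki}]_+=d_k[\mp\epsilon_{ki}]_+$. The sign flip here is exactly where care is needed. Substituting, $\lambda n'=n^\circ-2b_kd_ke_k+\sum_ib_i[\mp\epsilon_{ki}]_+d_ke_k$, which is the claimed expression.

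It remains to show that this element is $(n')^\circ$. It lies in $N^\circ$, since every coefficient with respect to the basis $(d_ie_i)$ is an integer. It is a positive multiple of $n'$. For primitivity in $N^\circ$, note that in the basis $(d_ie_i)$ of $N^\circ\cap N_\uf$ the map $\phi$ becomes $b_k\mapsto-b_k+\sum_{i\neq k}[\mp\epsilon_{ki}]_+b_i$ with the other $b_i$ fixed. This is again an integral involution, hence a lattice automorphism, and it sends the primitive element $n^\circ$ to a primitive element. Finally, $N^\circ\cap N_\uf$ is saturated in $N^\circ$, so the image is primitive in $N^\circ$. A positive multiple of $n'$ that is primitive in $N^\circ$ must be $(n')^\circ$.
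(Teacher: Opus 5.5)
Your proof is correct and follows the paper's argument. Like the paper, you show the substitution $n\mapsto n'$ is a lattice automorphism of $N_\uf$ (you exhibit it as an involution), and you obtain $(n')^\circ$ by applying the same linear map to $n^\circ$, using the sign flip $d_i[\pm\epsilon_{ik}]_+=d_k[\mp\epsilon_{ki}]_+$ from skew-symmetrizability. You also check explicitly that the map is an integral involution in the basis $(d_ie_i)$ of $N^\circ\cap N_\uf$; this makes rigorous a step the paper only asserts, namely that primitive elements of $N^\circ$ are sent to primitive elements of $N^\circ$.
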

\begin{proof}
For either rule in Theorem~\ref{mut thm} for replacing $\zeta_i$, the map $n\mapsto n'$ is an automorphism of the lattice $N_\uf$, so it sends primitive elements to primitive elements.
Since $N^\circ$ is a sublattice of $N$, the map also sends primitive elements of $N^\circ$ to primitive elements of $N^\circ$.
Thus the maps $n\mapsto n'$ and $n\mapsto n^\circ$ commute, so $(n')^\circ$ is obtained simply by applying the same linear map to $n^\circ$.
We compute that map on the basis $(d_ie_i:i\in I_\uf)$.
The map sends $d_ke_k$ to $-d_ke_k$, and for $i\in I_\uf\setminus\set{k}$, sends $d_ie_i$ to $d_i(e_i+[\pm\epsilon_{ik}]_+e_k)$, but since the $d_i$ are the skew-symmetrizing constants, this is $d_ie_i+[\pm d_i\epsilon_{ik}]_+e_k=d_ie_i+[\mp d_k\epsilon_{ki}]_+e_k=d_ie_i+[\mp\epsilon_{ki}]_+d_ke_k$.
\end{proof}

Lemma~\ref{mut broken line}, below, relates broken lines in the scattering diagrams $\Scat(\mu_k(\tB))$ and $\Scat(\tB)$.
It is a version of \cite[Proposition~3.6]{GHKK}, but we need to modify that result in the same way that Theorem~\ref{mut thm} is a modification of \cite[Theorem~1.24]{GHKK}.
Rather than explaining how to modify \cite[Proposition~3.6]{GHKK}, we prove the modification directly using Theorem~\ref{mut thm} and the same kind of argument given in the proof of \cite[Proposition~3.6]{GHKK}.

Call a curve $\bl:(-\infty,0]\to V^*$ a \newword{prospective broken line} if it is piecewise-linear and has finitely many domains of linearity, each labeled by Laurent monomials in $(z_i^\pm:i\in I_\uf)$ and $(\sigma_i:i\in I_\uf)$, with the infinite domain of linearity  labeled by a Laurent monomial in the $z_i$.
We extend each mutation map $\eta_\kk^B$ to a map on the set of prospective broken lines.
We begin by defining the map when $\kk$ is a singleton $k$; the map $\eta_\kk^B$ for arbitrary sequences $\kk$ is given by \eqref{eta comp}.

Given a prospective broken line $\bl$ with its infinite domain of linearity labeled $z^m$ for $m\in M^\circ$, define $\eta_k^B(\bl)$, as a curve, to be $\eta_k^B\circ\bl$.
We break all domains of $\bl$ and $\eta_k^B(\bl)$ at $e_k^\perp$ so that we can assume that no domain of either curve crosses $e_k^\perp$.
Thus every domain $L'$ of $\eta_k^B(\bl)$ is $\eta_k^B(L)$ for some domain $L$ of $\bl$.
We label $L'$ with the Laurent monomial obtained from $\const_Lz^{m_L}\sigma^{n_L}$ by substitution and multiplication as follows.
Simultaneously make these substitutions: 
\begin{alignat*}{2}
\text{replace }&z_k&&\text{ by}\quad\!\!\!\!\!
\begin{cases}\displaystyle
(z_k')^{-1}(\sigma'_k)^{[-\sgn(\sigma_k)]_+}\prod_{j\in I_\uf}(z_j')^{[-\epsilon_{kj}]_+}&\!\!\!\text{if }L\subseteq \set{p\in V^*:\br{p,e_k}\le0}\\\displaystyle
(z_k')^{-1}(\sigma'_k)^{-[\sgn(\sigma_k)]_+}\prod_{j\in I_\uf}(z_j')^{[\epsilon_{kj}]_+}&\!\!\!\text{if }L\subseteq \set{p\in V^*:\br{p,e_k}\ge0}
\end{cases}\\
\text{replace }&z_i&&\text{ by}\quad z'_i \quad\text{for }i\in I_\uf\setminus\set{k}\\
\text{replace }&\sigma_k&&\text{ by}\quad (\sigma_k')^{-1}\\
\text{replace }&\sigma_i&&\text{ by}\quad\sigma'_i(\sigma'_k)^{[\sgn(\sigma_k)\epsilon_{ik}]_+}  \quad\text{for }i\in I_\uf\setminus\set{k}.
\end{alignat*}
Then multiply by $(\sigma'_k)^{[\sgn(\sigma_k)\br{m,d_ke_k}]_+}$, noting that it is $m$, not $m_L$, that appears in the expression for the exponent.

\begin{lemma}\label{mut broken line}
For $m\in M^\circ_\uf$ and $Q\in V^*$ and a sequence $\kk$, a prospective broken line~$\bl$ is a broken line, relative to $\Scat(\tB)$, for $m$ with endpoint $Q$ if and only if~$\eta_\kk^B(\bl)$ is a broken line, relative to $\Scat(\mu_\kk(\tB))$, for $\eta_\kk^B(m)$ with endpoint $\eta_\kk^B(Q)$.
\end{lemma}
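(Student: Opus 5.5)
We first reduce to the case where $\kk$ is a single index $k$, and then verify the defining conditions of a broken line one at a time. For the reduction, the maps $\eta_\kk^B$ on curves and on prospective broken lines are both defined as compositions \eqref{eta comp}. Also, every $\mu_{k_i}\circ\cdots\circ\mu_{k_1}(\tB)$ again has signed-nondegenerating coefficients. So induction on the length of $\kk$ reduces the lemma to $\kk=k$.

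For a single $k$, one direction suffices, by the following argument. The map $\eta_k^{\mu_k(B)}$ is inverse to $\eta_k^B$. A direct check of the substitution rules shows the label transformation for $\mu_k(\tB)$ at $k$ undoes the one for $\tB$. This uses $\sigma'_k=\sigma_k^{-1}$, $\sgn(\sigma'_k)=-\sgn(\sigma_k)$, $\epsilon'_{kj}=-\epsilon_{kj}$, and the fact that the halfspaces $\br{p,e_k}\ge0$ and $\le0$ are swapped by $\eta_k^B$. I would also double-check the prefactor $(\sigma'_k)^{[\sgn(\sigma_k)\br{m,d_ke_k}]_+}$ under this round trip.

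So assume $\bl$ is a broken line for $m$ relative to $\Scat(\tB)$, with all domains broken at $e_k^\perp$. Conditions \ref{brok endpoint} and \ref{brok generic} follow from two facts. First, $\eta_k^B$ is a homeomorphism fixing $e_k^\perp$. Second, by Theorem~\ref{mut thm}, it carries walls (with their relative boundaries and intersections) to walls.

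For \ref{brok slope}, fix a domain $L$ inside one halfspace of $e_k^\perp$. There $\eta_k^B$ is linear, so the derivative of $\eta_k^B\circ\bl$ on $\eta_k^B(L)$ is the linear part applied to $-m_{L}$. The $z'$-exponent of the substituted label is obtained from $m_L$ by $a_k\mapsto -a_k$ and $a_j\mapsto a_j+a_k[\pm\epsilon_{kj}]_+$. By the mutation formula \eqref{b mut}, this is exactly $\eta_k^B$ on that halfspace. The $\sigma$-substitutions do not affect $z'$-exponents.

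For \ref{brok unbounded}, the unbounded domain eventually lies on the side of $e_k^\perp$ given by the sign of $\br{m,e_k}$, since $\bl(t)\approx -tm$ as $t\to-\infty$. On that side, the $\sigma'_k$-power produced by substituting for $z_k$ in $z^m$ should exactly cancel the prefactor $(\sigma'_k)^{[\sgn(\sigma_k)\br{m,d_ke_k}]_+}$, leaving $z^{\eta_k^B(m)}$. This is a short case check on the signs of $\br{m,e_k}$ and $\sgn(\sigma_k)$.

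The substantive condition is \ref{brok change slope}, which I split into two cases.

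The first case is a bend (or a non-bend) at a wall in $n^\perp$ with $n\ne e_k$. Both adjacent domains lie in the same closed halfspace, so a single linear substitution rule applies. Rewrite labels using $\zeta^n=z^{nB}\sigma^n$ and check that the label substitution sends $\zeta^n$ to $(\zeta')^{n'}$ with $n'$ as in Theorem~\ref{mut thm}. The wall function $f_\d(\zeta^n)$ becomes $f_\d((\zeta')^{n'})$. By Lemma~\ref{transforming coroots}, $(n')^\circ$ is the image of $n^\circ$ under the corresponding dual linear map. Hence $\br{m_{L_1},n^\circ}=\br{m_{L'_1},(n')^\circ}$, and the side conventions are preserved. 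So a term of $f^{\br{m_{L_1},n^\circ}}$ corresponds exactly to a term of the transformed power.

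The second case, which I expect to be the main obstacle, is $\bl$ meeting $e_k^\perp$. Here the two adjacent domains use different $z_k$-substitution rules, and the wall changes from $1+\zeta_k$ to $1+\zeta'_k$. I would compute the ratio of the two substitution rules applied to the same monomial $z^{m_{L_1}}\sigma^{n_{L_1}}$. I expect this ratio to be a power $(\zeta'_k)^{a}$ (possibly with a sign adjustment via $\sgn(\sigma_k)$), where $a=\br{m_{L_1},d_ke_k}$ up to sign. Given this, a contribution $\binom{a}{j}\zeta_k^j$ in $\bl$ should correspond to $\binom{a}{a-j}(\zeta'_k)^{a-j}$ in $\eta_k^B(\bl)$, via $(1+\zeta_k)^a=\zeta_k^a(1+\zeta_k^{-1})^a$. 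This covers both straight crossings ($j=0$, giving the top term) and genuine bends. The delicate bookkeeping is in the $\sigma'_k$ exponents, and this is exactly where the sign hypothesis on $\sigma_k$ enters.
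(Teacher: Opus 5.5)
Your proposal is correct and follows essentially the same route as the paper. You reduce to a single index $k$, get the converse from the inverse map $\eta_k^{\mu_k(B)}$, and dispatch conditions \ref{brok endpoint}--\ref{brok unbounded} directly. You then split condition \ref{brok change slope} into two cases: bends on one side of $e_k^\perp$, where Lemma~\ref{transforming coroots} gives $\lambda'=\lambda$ and $\nu'=\nu$, and crossings of $e_k^\perp$, where $\nu'=\lambda-\nu$ and $\binom{\lambda}{\nu}=\binom{\lambda}{\lambda-\nu}$.

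The difference is only one of packaging. The paper checks the exponent identities coordinate by coordinate. You instead note that each side's label substitution is a monomial map sending $\zeta^n$ to $(\zeta')^{n'}$ as in Theorem~\ref{mut thm}. This does hold, and in your Case~2 the negative-side rule for $z_k$ is exactly $\zeta'_k$ times the positive-side rule, because $[\sgn(\sigma_k)]_++[-\sgn(\sigma_k)]_+=1$. So the ratio is $(\zeta'_k)^{\br{m_{L_1},d_ke_k}}$ with no extra sign adjustment, and the $\sigma'_k$ bookkeeping takes care of itself.
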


\begin{example}\label{theta g2 eta1 broken ex}
This example continues Examples~\ref{theta g2 ex} and~\ref{theta g2 eta1 ex}.
We take $B$ as before and assume, for convenience, that $\tB$ has principal coefficients, so that in particular it has signed-nondegenerating coefficients and $\sgn(\sigma_1)=\sgn(\sigma_2)=1$.
Taking $m=[-2,3]$ as before, $\eta^B_1(m)=[2,-3]$.
The broken lines, relative to $\Scat(\mu_k(\tB))$, for $\eta_k^B(m)$ with endpoint $\eta_k^B(Q)$ are shown in red in Figure~\ref{theta g2 eta1 fig}.
These are precisely the $\eta_1^B(\bl)$ for the broken lines $\bl$ shown in Figure~\ref{theta g2 fig}.
\end{example}

\begin{proof}[Proof of Lemma~\ref{mut broken line}]
It is enough to prove the case where $\kk$ is a singleton sequence~$k$.
Write $m'=\eta_k^B(m)$ and $Q'=\eta_k^B(Q)$ and $\bl'=\eta_k^B(\bl)$.
Up to the symmetry of swapping $\bl$ and $\bl'$, it is enough to show that if $\bl$ is a broken line then $\bl'$ is a broken line.
Conditions \ref{brok endpoint}, \ref{brok slope}, and \ref{brok unbounded} in the definition of a broken line are equivalent for $\bl$ and $\bl'$.
By Theorem~\ref{mut thm}, Condition~\ref{brok generic} is also equivalent for $\bl$ and~$\bl'$.
It remains to check Condition~\ref{brok change slope}.
In checking this condition, we can leave out the post-multiplication of Laurent monomials by $(\sigma'_k)^{[\sgn(\sigma_k)\br{m_L,d_ke_k}]_+}$, because it happens on every domain and because coefficients don't affect how broken lines bend at points of nonlinearity.

Suppose $L_1$ and $L_2$ are adjacent domains of $\bl$, with $L_1$ \emph{before} $L_2$ as we follow~$\bl$ from $-\infty$ to $0$.
Write $\const_1$, $m_1$, and $n_1$ for $\const_{L_1}$, $m_{L_1}$, and $n_{L_1}$, and similarly $\const_2$, $m_2$, and $n_2$.
The domains $L_1$ and $L_2$ meet inside a wall, or a collection of walls with the same normal vector.
Suppose $n\in N^+_\uf$ is a primitive normal vector to this wall or walls, and that the product of the scattering terms on those walls is $\tau(\zeta^n)$, a formal power series in $\zeta^n$.
Write $\lambda$ for $|\br{m_1,n^\circ}|$.
Thus, we assume that there is a nonzero term $a\cdot(\zeta^n)^\nu$ of $(\tau(\zeta^n))^\lambda$ such that $\const_2z^{m_2}\sigma^{n_2}=\const_1z^{m_1}\sigma^{n_1}\cdot a\cdot(\zeta^n)^\nu$.
That is, we assume that there is a term $a\cdot(\zeta^n)^\nu$ such that
\begin{align}
\label{c condition}
\const_2&=a\const_1\\
\label{m condition}
m_2&=m_1+\nu\sum_{i\in I_\uf}\br{e^*_i,n}\sum_{j\in I_\uf}\epsilon_{ij}f_j\\
\label{n condition}
n_2&=n_1+\nu n.
\end{align}

Let $L'_1$ and $L'_2$ be the corresponding domains of $\bl'$.
By Theorem~\ref{mut thm}, we can write the scattering term where $L'_1$ and $L'_2$ meet as $\tau\bigl((\zeta')^{n'}\bigr)$ for some $n'$ and for the same $\tau$ as above.
Similarly, we write $\const_1\cdot(z')^{m'_1}(\sigma')^{n'_1}$ and $\const_2\cdot(z')^{m'_2}(\sigma')^{n'_2}$ for the Laurent monomials attached to $L'_1$ and $L'_2$, for some $m_1'$, $n_1'$, $m_2'$, and $n_2'$.
Write $\lambda'$ for $|\br{m_1',(n')^\circ}|$.
We want to show that there is a term $a'\cdot\bigl((\zeta')^{n'}\bigr)^{\nu'}$ of $\bigl(\tau\bigl((\zeta')^{n'}\bigr)\bigr)^{\lambda'}$ such that $\const_1\cdot(z')^{m'_1}(\sigma')^{n'_1}=\const_2\cdot(z')^{m'_2}(\sigma')^{n'_2}\cdot a'\cdot\bigl((\zeta')^{n'}\bigr)^{\nu'}$.
That is, we want a term $a'\cdot\bigl((\zeta')^{n'}\bigr)^{\nu'}$ such that
\begin{align}
\label{c condition prime}
\const_2&=a'\const_1\\
\label{m condition prime}
m_2'&=m_1'+\nu'\sum_{i\in I_\uf}\br{e^*_i,n'}\sum_{j\in I_\uf}\epsilon'_{ij}f_j\\
\label{n condition prime}
n_2'&=n_1'+\nu'n',
\end{align}
where the symbols $\epsilon'_{ij}$ are the entries of $\mu_k(B)$.

The relationship between the primed and unprimed symbols depends on where~$L_1$ and~$L_2$ are relative to $e_k^\perp$.
There are four cases, but we can treat them together in pairs as Case 1 and Case 2.
In every case, we compute $n'$ by Theorem~\ref{mut thm} and compute $m_1'$, $n_1'$, $m_2'$, and $n_2'$ by the definition of $\bl'$, neglecting, as justified above, the post-multiplication.
We will see that $\lambda'=\lambda$ in every case.

\noindent \textbf{Case 1:}
\emph{$L_1$ and $L_2$ are on the same side of $e_k^\perp$}.
We write expressions for the primed quantities using $\pm$ and $\mp$, with the top sign referring to the case where $L_1$ and $L_2$ are on the negative side of $e_k^\perp$ and the bottom sign referring to the case where $L_1$ and $L_2$ are on the positive side.
{\allowdisplaybreaks
\begin{align}
\label{n prime}
n'&=n-2\br{e^*_k,n}e_k+\sum_{i\in I_\uf}[\pm \epsilon_{ik}]_+\br{e^*_i,n}e_k\\
\label{m 1 prime}
m'_1&=m_1-2\br{m_1,d_ke_k}f_k+\br{m_1,d_ke_k}\sum_{j\in I_\uf}[\mp \epsilon_{kj}]_+f_j\\
\label{n 1 prime}
n_1'&=n_1-\br{m_1,d_ke_k}\sgn(\sigma_k)[\mp\sgn(\sigma_k)]_+e_k-2\br{e^*_k,n_1}e_k\\\nonumber
&\hspace{2 in}+\sum_{i\in I_\uf}\br{e^*_i,n_1}[\sgn(\sigma_k)\epsilon_{ik}]_+e_k\\
\label{m 2 prime}
m'_2&=m_2-2\br{m_2,d_ke_k}f_k+\br{m_2,d_ke_k}\sum_{j\in I_\uf}[\mp \epsilon_{kj}]_+f_j\\
\label{n 2 prime}
n_2'&=n_2-\br{m_2,d_ke_k}\sgn(\sigma_k)[\mp\sgn(\sigma_k)]_+e_k-2\br{e^*_k,n_2}e_k\\\nonumber
&\hspace{2 in}+\sum_{i\in I_\uf}\br{e^*_i,n_2}[\sgn(\sigma_k)\epsilon_{ik}]_+e_k
\end{align}
}

Using Lemma~\ref{transforming coroots}, we compute $\lambda'=\br{m'_1,(n')^\circ}=\br{m_1,n^\circ}=\lambda$.
Thus $\tau\bigl((\zeta')^{n'}\bigr)^{\lambda'}$ 
has a term $a\cdot\bigl((\zeta')^{n'}\bigr)^{\nu}$, so we will show that Conditions \ref{c condition prime}, \ref{m condition prime}, and \ref{n condition prime} hold with $a'=a$ and $\nu'=\nu$.
Condition \ref{c condition prime} is immediate.

If $\nu=0$, then Condition \ref{m condition prime} holds easily, so assume $\nu\neq0$ and compute
\begin{multline}
\label{m condition case 1}
\frac1\nu(m_2'-m_1')
=\frac1\nu(m_2-m_1)-\frac2\nu\br{m_2-m_1,d_ke_k}f_k\\
+\frac1\nu\br{m_2-m_1,d_ke_k}\sum_{j\in I_\uf}[\mp \epsilon_{kj}]_+f_j.
\end{multline}
Replacing each $m_2-m_1$ according to \eqref{m condition} and simplifying, we obtain
\begin{multline}\label{m condition case 1 simpler}
\frac1\nu(m_2'-m_1')=\sum_{i\in I_\uf}\br{e^*_i,n}\sum_{j\in I_\uf}\epsilon_{ij}f_j-2\sum_{i\in I_\uf}\br{e^*_i,n}\epsilon_{ik}f_k\\
+\sum_{i\in I_\uf}\br{e^*_i,n}\epsilon_{ik}\sum_{j\in I_\uf}[\mp \epsilon_{kj}]_+f_j.
\end{multline}


We want to show that $\frac1\nu(m_2'-m_1')$ equals $\sum_{i\in I_\uf}\br{e^*_i,n'}\sum_{j\in I_\uf}\epsilon'_{ij}f_j$.
Using the definition of matrix mutation, we rewrite the latter as follows, with expressions like $i\neq k$ standing for $i\in I_\uf\setminus\set{k}$:
\begin{multline}
\label{m condition case 1 sum}
-\br{e^*_k,n'}\sum_{j\in I_\uf}\epsilon_{kj}f_j
-\sum_{i\in I_\uf}\br{e^*_i,n'}\epsilon_{ik}f_k\\
+\sum_{i\neq k}\br{e^*_i,n'}\sum_{j\neq k}(\epsilon_{ij}+[\mp \epsilon_{kj}]_+\epsilon_{ik}+\epsilon_{kj}[\pm \epsilon_{ik}]_+)f_j
\end{multline}
Replacing $n'$ in \eqref{m condition case 1 sum} with its formula in terms of $n$, we can simplify to obtain the right side of \eqref{m condition case 1 simpler}.

Finally, we establish \eqref{n condition prime} in this case.
Again, we may as well assume $\nu\neq0$. 
Using \eqref{n 1 prime} and \eqref{n 2 prime}, we calculate
\begin{align}
\frac1\nu(n'_2-n'_1)&=\frac1\nu(n_2-n_1)-\frac1\nu\br{m_2-m_1,d_ke_k}\sgn(\sigma_k)[\mp\sgn(\sigma_k)]_+e_k\\\nonumber
&\hspace{0.25 in}-\frac2\nu\br{e^*_k,n_2-n_1}e_k+\frac1\nu\sum_{i\in I_\uf}\br{e^*_i,n_2-n_1}[\sgn(\sigma_k)\epsilon_{ik}]_+e_k.
\end{align}
Now \eqref{m condition} and \eqref{n condition} let us replace $n_2-n_1$ and $m_2-m_1$ with expressions in terms of $n$.
We simplify to obtain $\frac1\nu(n'_2-n'_1)=n'$, as desired.

\noindent \textbf{Case 2:}
\emph{$L_1$ and $L_2$ are on opposite sides of $e_k^\perp$.}
In this case, we have $n'=n=e_k$, and we use $\pm$ and $\mp$ with the top sign for the case where $L_1$ is on the negative side of $e_k^\perp$ and the bottom sign for the case where $L_1$ is on the positive side.
Thus $m_1'$ and $n_1'$ are described by \eqref{m 1 prime} and \eqref{n 1 prime}, while $m_2'$ and $n_2'$ are described by \eqref{m 2 prime} and \eqref{n 2 prime} with $\pm$ and $\mp$ reversed.
We see that $\br{m_1,d_ke_k}=-\br{m_1',d_ke_k}$, so that $\lambda'=\lambda$ in this case as well.
More specifically, $\lambda=\mp\br{m_1,d_ke_k}$.
We will show that \eqref{c condition prime}, \eqref{m condition prime}, and \eqref{n condition prime} hold with $\nu'=\lambda-\nu$.

Since $\tau(\zeta^n)=1+\zeta_k$, the binomial theorem says that $\nu\in\set{0,\ldots,\lambda}$ and $a=\binom{\lambda}{\nu}$.
Similarly, $\tau((\zeta')^{n'})=1+\zeta'_k$, so $\nu'\in\set{0,\ldots,\lambda}$ and $a'=\binom{\lambda}{\nu'}$, so \eqref{c condition prime} holds. 

Since $n=e_k$, \eqref{m condition} becomes $m_2=m_1+\nu\sum_{j\in I_\uf}\epsilon_{kj}f_j$.
In particular, we have $\br{m_1,d_ke_k}=\br{m_2,d_ke_k}=\mp \lambda$.
Since $n'=e_k$,  and since $\epsilon'_{kj}=-\epsilon_{kj}$ for all $j$, the desired condition \eqref{m condition prime} is $m_2'=m_1'-(\lambda-\nu)\sum_{j\in I_\uf}\epsilon_{kj}f_j$.
Using \eqref{m 1 prime} and \eqref{m 2 prime}, the latter with $\pm$ and $\mp$ reversed, we compute
\begin{align}
\label{m condition case 2}
m_2'-m_1'&=(m_2-m_1)\mp \lambda\sum_{j\in I_\uf}[\pm \epsilon_{kj}]_+f_j\pm \lambda\sum_{j\in I_\uf}[\mp \epsilon_{kj}]_+f_j\\\nonumber
&=\nu\sum_{j\in I_\uf}\epsilon_{kj}f_j\mp \lambda\sum_{j\in I_\uf}\bigl([\pm \epsilon_{kj}]_+-[\mp \epsilon_{kj}]_+\bigr)f_j\\\nonumber
&=(\nu-\lambda)\sum_{j\in I_\uf}\epsilon_{kj}f_j.
\end{align}

In this case, \eqref{n condition} is $n_2=n_1+\nu e_k$ and \eqref{n condition prime} is $n'_2=n'_1+(\lambda-\nu)e_k$.
Still keeping in mind that $\br{m_1,d_ke_k}=\br{m_2,d_ke_k}=\mp \lambda$, we compute
\begin{align}
n'_2-n'_1&=(n_2-n_1)\pm \lambda\sgn(\sigma_k)[\pm\sgn(\sigma_k)]_+e_k\mp \lambda\sgn(\sigma_k)[\mp\sgn(\sigma_k)]_+e_k\\\nonumber
&\hspace{0.25in}-2\br{e^*_k,n_2-n_1}e_k+\sum_{i\in I_\uf}\br{e^*_i,n_2-n_1}[\sgn(\sigma_k)\epsilon_{ik}]_+\bigr)e_k\\\nonumber
&=\nu e_k+\lambda e_k-2\br{e^*_k,\nu e_k}e_k\\\nonumber
&=(\lambda-\nu)e_k.
\end{align}
We have verified \eqref{c condition prime}, \eqref{m condition prime}, and \eqref{n condition prime} in all cases.
\end{proof}

To prove Proposition~\ref{mut theta}, we need one more ingredient, namely \cite[Theorem~3.5]{GHKK}, which allows us to change the theta functions with endpoint $Q'$ in Lemma~\ref{mut broken line} to theta functions with endpoint $Q$.
The general statement of the theorem says that one can change the endpoints of theta functions by applying a path-ordered product.
There is a path from $Q'$ to $Q$ that only crosses one wall, so the path-ordered product in this case is a single wall-crossing automorphism.
The wall is $(e_k^\perp,1+\zeta'_k)$, so the theorem says that $\thet_{Q,m'}^{\mu_k(\tB)}$ is obtained from $\thet_{Q',m'}^{\mu_k(\tB)}$ by replacing~$z'_k$ with $z'_k(1+\zeta_k')^{-1}$.

\begin{proof}[Proof of Proposition~\ref{mut theta}]
Choose $Q$ appropriately in the interior of $D$ and write~$Q'$ for $\eta_k^B(Q)$.
Since in particular $Q$ has $\br{Q,d_ke_k}>0$, Lemma~\ref{mut broken line} implies that $\thet_{Q',m'}^{\mu_k(\tB)}$ is obtained from $\thet_{Q,m}^\tB$ as described in Proposition~\ref{mut theta}, except that $z_k$ is replaced by $(z_k')^{-1}(\sigma'_k)^{-[\sgn(\sigma_k)]_+}\prod_{j\in I_\uf}(z_j')^{[\epsilon_{kj}]_+}$ (instead of the replacement described in Proposition~\ref{mut theta}).
By \cite[Theorem~3.5]{GHKK}, as explained above, $\thet_{Q,m'}^{\mu_k(\tB)}$ is now obtained by replacing $z'_k$ with $z'_k(1+\zeta_k')^{-1}$.
The net effect is to replace $z_k$ by $(z'_k)^{-1}(1+\zeta_k')(\sigma'_k)^{-[\sgn(\sigma_k)]_+}\prod_{j\in I_\uf}(z_j')^{[\epsilon_{kj}]_+}$, which is the replacement described in Proposition~\ref{mut theta}.
\end{proof}

As already explained (just after the statement of Proposition~\ref{mut theta}), this completes the proof of Theorem~\ref{2 muts}.

\begin{example}\label{theta g2 eta1 replace ex}
This example continues and concludes Examples~\ref{theta g2 ex}, \ref{theta g2 eta1 ex}, and \ref{theta g2 eta1 broken ex}.
Write $m'=\eta_k^B(m)=[2,-3]$ and $Q'=\eta_k^B(Q)$ as before.
From Example~\ref{theta g2 eta1 broken ex} (Figure~\ref{theta g2 eta1 fig}), we see that 
\begin{align*}
\thet_{Q',m'}&=\begin{multlined}[t][310 pt]
(z'_1)^2(z'_2)^{-3}+2\sigma'_1(z'_1)^2+(\sigma'_1)^2(z'_1)^2(z'_2)^3\\
+3\sigma'_2z'_1(z'_2)^{-3}+3\sigma'_1\sigma'_2z'_1\\
+3(\sigma'_2)^2(z'_2)^{-3}+(\sigma'_2)^3(z'_1)^{-1}(z'_2)^{-3}.
\end{multlined}\\
&=\begin{multlined}[t][310 pt]
(z'_1)^2(z'_2)^{-3}\bigl(1+2\sigma'_1(z'_2)^3+(\sigma'_1)^2(z'_2)^6\\
+3\sigma'_2(z'_1)^{-1}+3\sigma'_1\sigma'_2(z'_1)^{-1}(z'_2)^3\\
+3(\sigma'_2)^2(z'_1)^{-2}+(\sigma'_2)^3(z'_1)^{-3}\bigr).
\end{multlined}
\end{align*}
Replacing $z_1'$ with $z_1'(1+\zeta'_1)^{-1}$, we obtain
\begin{align*}
\thet^{\mu_1(\tB)}_{m'}&=\begin{multlined}[t][310 pt]
(z'_1)^2(z'_2)^{-3}\biggl(\frac{1+2\sigma'_1(z'_2)^3+(\sigma'_1)^2(z'_2)^6}{(1+\zeta'_1)^2}\\
+\frac{3\sigma'_2(z'_1)^{-1}+3\sigma'_1\sigma'_2(z'_1)^{-1}(z'_2)^3}{1+\zeta'_1}\\
+3(\sigma'_2)^2(z'_1)^{-2}+(\sigma'_2)^3(z'_1)^{-3}(1+\zeta'_1)\biggr)
\end{multlined}\\
&=
(z'_1)^2(z'_2)^{-3}\biggl(\frac{1+2\zeta'_1+(\zeta'_1)^2}{(1+\zeta'_1)^2}
+\frac{3\zeta'_2+3\zeta'_1\zeta'_2}{1+\zeta'_1}
+3(\zeta'_2)^2+(\zeta'_2)^3(1+\zeta'_1)\biggr)\\
&=
(z'_1)^2(z'_2)^{-3}\bigl(1+3\zeta'_2
+3(\zeta'_2)^2+(\zeta'_2)^3+\zeta'_1(\zeta'_2)^3\bigr).
\end{align*}

\end{example}

\section{Applications of mutation of theta functions}\label{app sec}

\subsection{Mutation symmetry} \label{sym sec}
A \newword{mutation symmetry} of an exchange matrix $B$ is a sequence $\kk$ of indices such that $\mu_\kk(B)=B$.
A mutation symmetry induces a symmetry of cluster scattering diagrams.
In this section, we state and prove our main result, which simplifies some structure constant computations in the presence of a mutation-symmetry.
Recall from Section~\ref{struct sec} that $\Theta\subseteq M^\circ_\uf$ is the set of vectors $m\in M^\circ_\uf$ such that only finitely many broken lines figure into the definition of $\thet_m$.

\begin{theorem}\label{finite orbit}
Suppose $\tB$ has signed-nondegenerating coefficients and suppose~$\kk$ is a mutation symmetry of~$B$.
Let $\v$ be a monomial in a finite set $\set{\thet_p:p\in P}$ of theta functions with $P\subset\Theta$, expressed as $\v=\sum_{m\in M^\circ_\uf}\sum_{n\in N_\uf}c_{m,n}\,\sigma^n\thet_m$ in the theta basis.
If each $p\in P$ is in a finite $\eta^B_\kk$-orbit but $m\in M^\circ_\uf$ is in an infinite $\eta^B_\kk$-orbit, then $c_{m,n}=0$ for all $n\in N_\uf$.
\end{theorem}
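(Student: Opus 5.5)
The plan is to use Theorem~\ref{2 muts} (via Corollary~\ref{mut can}) to move the expansion of $\v$ around the $\eta^B_\kk$-orbits. The key observation is that the set of indices $m$ with some $c_{m,n}\neq0$ is finite. Since $P\subset\Theta$, Theorem~\ref{Theta facts}, applied inductively to the factors of the monomial $\v$, shows that the expansion of $\v$ in the theta basis has only finitely many nonzero terms. In particular $S=\set{m: c_{m,n}\neq0\text{ for some }n}$ is finite. I will show that $S$ is stable under $\eta^B_\kk$. A finite set stable under the injective map $\eta^B_\kk$ consists of points in finite orbits, which gives the conclusion.

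To get the stability, mutate along $\kk$. Write $\tB'=\mu_\kk(\tB)$. Since $\kk$ is a mutation symmetry, its principal part is again $B$, though the coefficient rows may differ. By Corollary~\ref{mut can}, $\can(\tB')=\can(\tB)$ under the identification of variables. Iterating Theorem~\ref{2 muts} along $\kk$ gives, for each $m$,
\[
\thet^{\tB}_m=\thet^{\tB'}_{\eta^B_\kk(m)}\cdot(\sigma')^{a(m)}
\]
for some exponent vector $a(m)$. Here the primed variables are those of $\tB'$, and the $\sigma'$ are Laurent monomials in the $\sigma$ (and conversely), as noted in Section~\ref{coeff sec}. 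Substituting into the expansion of $\v$ yields
\[
\prod_{p}\bigl(\thet^{\tB'}_{\eta^B_\kk(p)}\bigr)^{e_p}=\sum_{m,n} c_{m,n}\,(\sigma')^{\,b(m,n)}\thet^{\tB'}_{\eta^B_\kk(m)}
\]
after moving the $\sigma'$ factors across. This is an expression in the reduced theta basis for $\tB'$. By Proposition~\ref{theta basis}, and since $m\mapsto\eta^B_\kk(m)$ is a bijection, it is the unique expansion. Hence the support of the expansion of $\v'=\prod_p(\thet^{\tB'}_{\eta^B_\kk(p)})^{e_p}$ is exactly $\eta^B_\kk(S)$.

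Next compare $\v'$ with the original setting. Both $\tB$ and $\tB'$ extend $B$ with nondegenerate coefficients. By Proposition~\ref{where prin}, Corollary~\ref{where prin cor} and Proposition~\ref{struct plus plus}, the structure constants for $\tB'$ are obtained from those for $\tB$ by the substitution $\sigma_i\mapsto\sigma'_i$, which does not change supports. So the support of $\v'$ equals the support of the analogous monomial $\prod_p(\thet^{\tB}_{\eta^B_\kk(p)})^{e_p}$ computed for $\tB$. Now iterate. The indices $p\in P$ lie in finite orbits, so some power $\kk^r$ fixes every $p\in P$. Applying the previous paragraph $r$ times with the symmetry $\kk$ (each intermediate matrix again extends $B$ and has signed-nondegenerating coefficients) gives $\eta^{B}_{\kk^r}(S)=S$ as sets. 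Thus every element of $S$ lies in a finite $\eta^B_\kk$-orbit. Any $m$ in an infinite orbit is therefore outside $S$, so $c_{m,n}=0$ for all $n$.

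The main obstacle is bookkeeping. We must ensure that $\eta^B_\kk$ for the repeated symmetry really composes as $\eta^B_{\kk^r}=(\eta^B_\kk)^r$; this holds because $\mu_\kk(B)=B$ and by \eqref{eta comp}. We must also check that the coefficient change between $\tB$ and $\mu_\kk(\tB)$ preserves supports, which needs nondegeneracy for both matrices. This is exactly what the signed-nondegenerating hypothesis supplies.
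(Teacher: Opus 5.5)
Your proof is correct and is essentially the paper's: like the proof of Proposition~\ref{orbit union}, you combine Theorem~\ref{2 muts} (iterated along the symmetry) with the coefficient-change isomorphism of Proposition~\ref{where prin} to show that a power of $\eta^B_\kk$ fixing every $p\in P$ maps the support of the theta-basis expansion onto itself, and you use Theorem~\ref{Theta facts} to make that support finite. The differences are only organizational: the paper passes at once to $\kk^\ell$ (so that $\v$ is sent to $\sigma^q\v$), proves the orbit-union statement without assuming $P\subset\Theta$, and invokes finiteness only at the end, whereas you impose finiteness up front (which keeps all sums finite) and iterate a one-step transport $r$ times.
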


We prove Theorem~\ref{finite orbit} by way of the following proposition with the same hypotheses except that there is no requirement that $P\subset\Theta$.

\begin{proposition}\label{orbit union} 
Suppose $\tB$ has signed-nondegenerating coefficients and suppose~$\kk$ is a mutation symmetry of~$B$.
Let $\v$ be a monomial in a finite set $\set{\thet_p:p\in P}$ of theta functions and write $\v=\sum_{m\in M^\circ_\uf}\sum_{n\in N_\uf}c_{m,n}\,\sigma^n\thet_m$.
If there exists $\ell\ge0$ such that $(\eta^B_\kk)^\ell$ fixes each $p\in P$, then $\set{m\in M^\circ_\uf:\exists n\in N_\uf\text{ with }c_{m,n}\neq0}$
is a union of $(\eta^B_\kk)^\ell$-orbits.
\end{proposition}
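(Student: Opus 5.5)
Replacing $\kk$ by its $\ell$-fold concatenation $\kk^\ell$ (still a mutation symmetry of $B$, with $\eta^B_{\kk^\ell}=(\eta^B_\kk)^\ell$ by \eqref{eta comp}), we may assume $\ell=1$. We then show that the support $S=\set{m:\exists n,\ c_{m,n}\neq0}$ satisfies $\eta^B_\kk(S)\subseteq S$. Since $\eta^B_\kk$ is a bijection of $M^\circ_\uf$, the same argument applied to the inverse symmetry (the reversed sequence, which is a mutation symmetry of $\mu_\kk(B)=B$ and whose mutation map is $(\eta^B_\kk)^{-1}$) gives $(\eta^B_\kk)^{-1}(S)\subseteq S$. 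Together these say that $S$ is a union of orbits.

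Write $\tB'=\mu_\kk(\tB)$; its unfrozen part is again $B$. By Corollary~\ref{mut can} we identify $\can(\tB')=\can(\tB)$ via iterated \eqref{exch rel}--\eqref{hy mut}. Iterating Theorem~\ref{2 muts} along $\kk$ gives $\thet^{\tB'}_{\eta^B_\kk(m)}=\thet^\tB_m\cdot\sigma^{r(m)}$ for some frozen Laurent monomial $\sigma^{r(m)}$, expressible in the $\sigma$'s since the $\sigma^{(\kk)}_i$ span the same lattice. Substituting into $\v=\prod_{p}\thet_p^{a_p}$ and using $\eta^B_\kk(p)=p$ gives $\v=\sigma^{r}\prod_p(\thet^{\tB'}_p)^{a_p}$ for one monomial $\sigma^r$. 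Expanding the right-hand side in the $\tB'$ theta basis (Proposition~\ref{theta basis}) produces coefficients $c'_{m,n}$ with respect to $\tB'$. Then Corollary~\ref{where prin cor}, applied to the two extensions $\tB$ and $\tB'$ of the same $B$, both with nondegenerate coefficients, shows that the support of the expansion of $\prod_p(\thet^{\tB'}_p)^{a_p}$ in the $\tB'$-theta basis equals the support $S$ of the expansion of $\prod_p(\thet^{\tB}_p)^{a_p}$ in the $\tB$-theta basis. This is because the isomorphism sends $\thet^\tB_m\mapsto\thet^{\tB'}_m$ and monomials in $\sigma$ to monomials in $\sigma'$, and it preserves convergent sums.

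It then remains to re-expand $\v$ in the original theta basis. Each $\thet^{\tB'}_{m}$ equals $\thet^\tB_{(\eta^B_\kk)^{-1}(m)}$ times a monomial in $\sigma$. Consequently, the expansion of $\v$ in the $\tB$-theta basis, obtained by substituting termwise into the convergent $\tB'$-expansion, has support exactly $(\eta^B_\kk)^{-1}(S)$, and by uniqueness of expansions (Proposition~\ref{theta basis}) this support is $S$. Hence $(\eta^B_\kk)^{-1}(S)=S$, which already gives orbit-invariance in both directions at once.

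The main obstacle is this last step, justifying the termwise substitution. The relabelling $m\mapsto(\eta^B_\kk)^{-1}(m)$ together with multiplication by $\sigma$-monomials must preserve convergence in $\bigoplus z^m\k(\!(\zeta)\!)$, and distinct terms must not collide and cancel. Non-collision follows because the map $(m,n)\mapsto(\eta^{-1}(m),n+r(m))$ is injective. For convergence, the cleanest route is to work $\g$-degree by degree. Within the single homogeneous component the substitution is a change of variables $\zeta\mapsto\zeta'$, which by \eqref{hy mut} is invertible on formal Laurent series only under sign control. I would therefore try first to use the signed-nondegenerating hypothesis to check that the $\sigma$-exponent shifts $r(m)$ are bounded below along the relevant set, so that the lower-bound condition defining convergence is preserved.
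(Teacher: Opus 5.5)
Your argument is the paper's proof. The paper iterates Theorem~\ref{2 muts} along the symmetry and uses $\mu_{\kk^\ell}(B)=B$ with Proposition~\ref{where prin} to see that $\v$ is a $\sigma$-monomial times the same product of theta functions for $\mu_{\kk^\ell}(\tB)$. It then transports the expansion, reindexes by $\eta^B_{\kk^\ell}$, and invokes uniqueness from Proposition~\ref{theta basis}. The paper performs that last reindex-and-compare step without comment, so it does not address the convergence question you raise either. The question is harmless when $P\subset\Theta$, which is the case used for Theorem~\ref{finite orbit}, because Theorem~\ref{Theta facts} makes every expansion involved finite. For general $P$, however, your proof is incomplete exactly where you say, and your proposed remedy will not close it.

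Here is why. Take one mutation at $k$ and write $\tB'=\mu_k(\tB)$. A term $(\sigma')^{n'}\thet^{\tB'}_{m'}$ of the mutated expansion, with $n'\in N^{0+}_\uf$, becomes $\sigma^{N}\thet^{\tB}_{(\eta^B_k)^{-1}(m')}$. Here $N_i=n'_i$ for $i\neq k$, and $N_k=-n'_k+(\text{terms in the }n'_i\text{ with }i\neq k)$, because the diagonal of $B$ vanishes. So the lower bound you want amounts to controlling $n'_k$ by the other coordinates on the support of the expansion. That is a property of the particular product $\v$, and the sign condition on coefficient rows cannot supply it.

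What does supply it is Lemma~\ref{mut broken line} applied to pairs, as in Proposition~\ref{mut pair}.
\begin{enumerate}
\item Let $Q\to m$ inside the closed half-space of $e_k^\perp$ that contains $m$. Then pairs of broken lines contributing to $a_Q(p_1,p_2,m)$ correspond bijectively to pairs contributing to $a_{\eta^B_k(Q)}(\eta^B_k(p_1),\eta^B_k(p_2),\eta^B_k(m))$.
\item Comparing final labels shows that the $\tB'$ structure constant $a'(\eta^B_k(p_1),\eta^B_k(p_2),\eta^B_k(m))$ is obtained from $a(p_1,p_2,m)$ by the $\sigma$-substitutions of Proposition~\ref{mutate subs}, followed by multiplication by $(\sigma'_k)^{[sc_1]_{+}+[sc_2]_{+}-[sc]_{+}}$. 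Here $s=\sgn(\sigma_k)$ and $c_1,c_2,c$ are the pairings of $p_1,p_2,m$ with $d_ke_k$.
\item Iterate this over the factors of $\v$ and over the mutations in the sequence. The expansion in the mutated seed is then exactly the termwise transport of the original expansion.
\end{enumerate}
Hence the supports correspond under $\eta^B_{\kk^\ell}$ and your comparison goes through. Your lower bound then holds a posteriori, with $N\in N^{0+}_\uf$.
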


\begin{proof}
Write $\eta$ as a shorthand for $(\eta^B_\kk)^\ell=\eta^B_{\kk^\ell}$ and write $\mu$ as shorthand for $\mu_{\kk^\ell}$.
Continue, from the end of Section~\ref{coeff sec}, the notation $(\sigma^{(\kk)}_i:i\in I_\uf)$ for the Laurent monomials in $\set{z_j:j\in I_\fr}$ associated to $\mu_\kk(\tB)$, and write ~$r$ for the map that replaces each $\sigma_i$ by $\sigma^{(\kk^\ell)}_i$.
The essence of the proof is to combine Proposition~\ref{where prin} with and Theorem~\ref{2 muts}.

Because $\mu(B)=B$, Proposition~\ref{where prin} lets us pass between theta functions for $\tB$ and theta functions for $\mu(\tB)$ using the map~$r$.
Specifically, $r(\thet^\tB_m)=\thet^{\mu(\tB)}_m$ for any $m\in M^\circ_\uf$.
(We emphasize that, although $\mu(B)=B$, typically $\mu(\tB)\neq\tB$.)

Theorem~\ref{2 muts} lets us pass between theta functions for $\tB$ and theta functions for $\mu(\tB)$ by applying $\eta$ and multiplying by a Laurent monomial in the $\sigma_i$.
For each $m\in M^\circ_\uf$, we apply Theorem~\ref{2 muts} many times.
Keeping in mind that~$\tB$ has signed-nondegenerating coefficients, we see that $\thet^{\mu(\tB)}_{\eta(m)}$ equals $\thet^\tB_m$ times a Laurent monomial in the $\sigma_i$.

Suppose $p\in P$, so that $\eta(p)=p$ by hypothesis.
Then $r(\thet^\tB_p)$ is $\thet^\tB_p$ times a Laurent monomial in the $\sigma_i$.
Since $\v$ is a monomial in $\set{\thet^\tB_p:p\in P}$, also $r(\v)$ is~$\v$ times a Laurent monomial in the $\sigma_i$, specifically $r(\v)=\sigma^{q}\v$ for some $q\in N_\uf$.

Apply $r$ to both sides of the equation $\v=\sum_{m\in M^\circ_\uf}\sum_{n\in N_\uf}c_{m,n}\,\sigma^n\thet^\tB_m$ and solve for $\v$, to obtain 
\[\v=\sum_{m\in M^\circ_\uf}\sum_{n\in N_\uf}c_{m,n}\,(\sigma^{(\kk^\ell)})^n\sigma^{-q}\thet^{\mu(\tB)}_m.\]
Since $\eta$ is a permutation of $M^\circ_\uf$, we can reindex the sum as
\[\v=\sum_{m\in M^\circ_\uf}\sum_{n\in N_\uf}c_{\eta(m),n}\,(\sigma^{(\kk^\ell)})^n\sigma^{-q}\thet^{\mu(\tB)}_{\eta(m)}.\]
Since $\thet^{\mu(\tB)}_{\eta(m)}$ equals $\thet^\tB_m$ times a Laurent monomial in the $\sigma_i$, this is
\[\v=\sum_{m\in M^\circ_\uf}\sum_{n\in N_\uf}c_{\eta(m),n}\,(\sigma^{(\kk^\ell)})^n\sigma^{q_m-q}\thet^{\tB}_m,\]
where $q_m$ is a vector in $N_\uf$ (depending on $m$) that need not be specified.
Comparing this formula for $\v$ to the original equation $\v=\sum_{m\in M^\circ_\uf}\sum_{n\in N_\uf}c_{m,n}\,\sigma^n\thet^\tB_m$ and remembering that the theta functions are a reduced basis, we conclude, for all $m\in M^\circ_\uf$, that there exists $n$ such that $c_{\eta(m),n}\neq0$ if and only if there exists $n$ such that $c_{m,n}\neq0$.
\end{proof}

Under the hypotheses of Proposition~\ref{orbit union}, if also $P\subset\Theta$, then Theorem~\ref{Theta facts} implies that $\set{m\in M^\circ_\uf:\sum_{n\in N_\uf}c_{m,n}\,\sigma^n\neq0}$ is finite.
Thus we have proved Theorem~\ref{finite orbit}.

\subsection{Mutation of pairs of broken lines}\label{mut pair sec}  
We now develop another tool that is useful for computing structure constants for multiplication of theta functions.
(For example, this tool is essential in the case where $B$ is of acyclic affine type, treated in~\cite{afftheta}.) 
The point of the following proposition is that it gives conditions under which mutation takes a pair of broken lines that contributes to structure constants to another pair of broken lines that contributes to structure constants.

At each step in applying a mutation map $\eta_\kk^B$ to a vector, there are two different cases.
Specifically, at step $i$, the cases depend on which side of the hyperplane $e_{k_i}^\perp$ the output of $\eta^B_{k_{i-1}\cdots k_1}$ is on.
Two vectors are in the same \newword{domain of definition} of $\eta_\kk^B$ if, at every step, the same case applies to both vectors.
That is, at every step~$i$, the output of $\eta^B_{k_{i-1}\cdots k_1}$ for the two vectors is weakly on the same side of $e_{k_i}^\perp$.
Recall the extension of $\eta_\kk^B$ to a map on broken lines, defined in connection with Lemma~\ref{mut broken line}.

\begin{proposition}\label{mut pair}
Suppose $\tB$ has signed-nondegenerating coefficients, let $m\in M_\uf^\circ$, suppose $Q\in V^*$ is not contained in any wall of $\Scat(\tB)$, and let $\kk$ be a sequence of indices.
If $m$ and $Q$ are in the same domain of definition of~$\eta_\kk^B$, then a pair $(\bl_1,\bl_2)$ of broken lines contributes to $a_Q(p_1,p_2,m)$ if and only if $(\eta_\kk^B(\bl_1),\eta_\kk^B(\bl_2))$ contributes to $a_{\eta_\kk^B(Q)}(\eta_\kk^B(p_1),\eta_\kk^B(p_2),\eta_\kk^B(m))$.
\end{proposition}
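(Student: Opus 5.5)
The plan is to reduce to the statement for a single broken line (Lemma~\ref{mut broken line}). After that, the only extra work is to check that the defining condition $m_{\bl_1}+m_{\bl_2}=m$ of a contributing pair is preserved. Recall that $(\bl_1,\bl_2)$ contributes to $a_Q(p_1,p_2,m)$ exactly when $\bl_i$ is a broken line for $p_i$ with endpoint $Q$ and the final slopes satisfy $m_{\bl_1}+m_{\bl_2}=m$. By Lemma~\ref{mut broken line}, applied once to each $\bl_i$, $\bl_i$ is a broken line for $p_i$ with endpoint $Q$ if and only if $\eta_\kk^B(\bl_i)$ is a broken line for $\eta_\kk^B(p_i)$ with endpoint $\eta_\kk^B(Q)$. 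So the proof comes down to showing that $m_{\bl_1}+m_{\bl_2}=m$ if and only if $m_{\eta_\kk^B(\bl_1)}+m_{\eta_\kk^B(\bl_2)}=\eta_\kk^B(m)$.

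I will argue by induction on the length of $\kk$, using the factorization \eqref{eta comp} of $\eta_\kk^B$ and of its action on prospective broken lines. The hypothesis that $m$ and $Q$ lie in the same domain of definition is inherited at each step. At step $i$ the relevant points are the images of $m$ and $Q$ under $\eta^B_{k_{i-1}\cdots k_1}$, and these remain in a common domain of definition of the remaining mutation sequence relative to the mutated matrix. So it suffices to treat a singleton $\kk=k$.

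For a singleton $k$, suppose $Q$ is weakly on one side of $e_k^\perp$; it is strictly on one side, since $Q$ avoids walls. Suppose also that $m$ is weakly on the same side. The last domain of each $\bl_i$ contains $Q$, so it lies on that side of $e_k^\perp$. By the definition of $\eta_k^B$ on prospective broken lines (the substitutions for $z_k$ and $z_i$), the exponent vector of the final monomial of $\eta_k^B(\bl_i)$ in the $z'$ variables is obtained from $m_{\bl_i}$ by the \emph{linear} map that agrees with $\eta_k^B$ on the closed halfspace containing $Q$. This is exactly the formula \eqref{m 1 prime}, with the sign chosen by the side of $L$. Likewise, since $m$ lies weakly in the same halfspace, $\eta_k^B(m)$ is the image of $m$ under that same linear map. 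Because the map is linear and invertible, $m_{\bl_1}+m_{\bl_2}=m$ holds if and only if the images satisfy the corresponding equation. Here the $\sigma'$ factors and the post-multiplication do not affect the $z'$-exponent.

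The one subtle point, and what I expect to be the main obstacle, is making sure that the slope read off from the monomial on the last domain of $\eta_k^B(\bl_i)$ really is the linear image of $m_{\bl_i}$ under the halfspace map for the side of $Q$. This has to be checked against condition~\ref{brok slope}: the derivative of $\eta_k^B\circ\bl_i$ on a domain is the linear part of $\eta_k^B$ on that side applied to $-m_L$. It also needs care when $m$ lies on $e_k^\perp$ itself. In that case both halfspace maps agree on $m$, so the linear-map argument still goes through. I would verify this consistency directly from \eqref{m 1 prime}. The verification is essentially already contained in the proof of Lemma~\ref{mut broken line}, Case~1, where $m_1'$ is the linear image of $m_1$.
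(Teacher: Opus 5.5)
Your proposal is correct and follows essentially the same route as the paper. You reduce to a singleton $k$, apply Lemma~\ref{mut broken line} to each broken line, and observe that the final slopes $m_{\bl_i}$ and $m$ are all transformed by the single linear map that agrees with $\eta_k^B$ on the closed halfspace of $e_k^\perp$ containing $Q$. The only difference is cosmetic: you get both directions at once from the invertibility of that linear map, whereas the paper proves one direction and then appeals to $(\eta_k^B)^{-1}=\eta_k^{\mu_k(B)}$.
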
 
\begin{proof}   
It is enough to prove the case where $\kk$ consists of a single index~$k$.
Suppose $p_1,p_2\in M^\circ_\uf$.
Let $\bl_1$ and~$\bl_2$ be broken lines for $p_1$ and $p_2$ respectively, each having endpoint $Q$.
Let $\bl_1'$ and $\bl_2'$ be the broken lines for $\eta^B_k(p_1)$ and $\eta^B_k(p_1)$ respectively, each having endpoint $\eta_k^B(Q)$, defined by the construction in Lemma~\ref{mut broken line}.
In light of Lemma~\ref{mut broken line}, the proposition amounts to showing that $m_{\bl_1}+m_{\bl_2}=m$ if and only if $m_{\bl_1'}+m_{\bl_2'}=\eta_k^B(m)$.
Because $(\eta_k^B)^{-1}=\eta_k^{\mu_k(B)}$, it is enough to prove one direction of implication.

Suppose $m_{\bl_1}+m_{\bl_2}=m$.
The map $\eta^B_k$ is piecewise linear, with two domains of linearity separated by $e_k^\perp$.
Since $Q$ is not in any wall of $\Scat(\tB)$ and because $e_k^\perp$ is a wall of $\Scat(\tB)$, the point $Q$ is not on $e_k^\perp$.
Therefore the domain of linearity of $\bl_1$ containing $0$ has a piece that is strictly on the same side of $e_k^\perp$ as $Q$, and therefore weakly on the same side of $e_k^\perp$ as $m$.
Since $m_{\bl_1}$ is the negative of the derivative of $\bl_1$ on that domain of linearity, $m_{\bl_1'}$ is obtained from $m_{\bl_1}$ by the same linear map that takes $m$ to $\eta^B_k(m)$.
The same is true for $m_{\bl_2'}$, and we conclude that $m_{\bl_1'}+m_{\bl_2'}=\eta_k^B(m)$.
\end{proof}

\subsection{Dominance regions, $B$-cones, and pointed elements}\label{dom sec}
Given $m\in M^\circ_\uf$ and a sequence~$\kk$, define $\Dom^B_{m,\kk}=\bigl(\eta_{\kk}^B\bigr)^{-1}\sett{\eta_\kk^B(m)+n\cdot\mu_\kk(B):n\in N^{0+}_\uf}\subseteq M^\circ_\uf$.
The \newword{integral dominance region} of~$m$ with respect to $B$ is $\Dom^B_m=\bigcap_\kk\Dom^B_{m,\kk}$, where the intersection is over all sequences~$\kk$.
The \newword{(real) dominance region} of~$m$ with respect to $B$ is defined in the same way, but replacing $n\in N^{0+}_\uf$ everywhere with a vector in $V$ with nonnegative entries.
The inclusion of $M^\circ_\uf$ into $V^*$ sends the integral dominance region to a subset of the real dominance region.
The definition of the dominance region originated, in the form of a partial order on $M^\circ$, in the work of Fan Qin~\cite{FanQin}  
and has been defined and studied in the form of a set of points by Rupel and Stella~\cite{RupelStella}. 
The original motivation of the definition was to study bases for the upper cluster algebra, and in Sections~\ref{prb sec} we will make some statements in the style of~\cite{FanQin} about dominance regions and certain special bases for the small canonical algebra.
However, our main motivation for discussing dominance regions here is for computing structure constants for theta functions, specifically the following theorem.

\begin{theorem}\label{B cone prod}
Suppose that $\tB$ has signed-nondegenerating coefficients and that $m_1,\ldots,m_\ell$ are all contained in the same $B$-cone.
Write $m=a_1m_1+\cdots+a_\ell m_\ell$ for nonnegative integers $a_1,\ldots,a_\ell$.
Then there exist constants $c_{p,n}\in\k$ such that $\thet_{m_1}^{a_1}\cdots\thet_{m_\ell}^{a_\ell}=\thet_m+\sum_p\sum_nc_{p,n}\sigma^n\thet_p$, summing over $p\in\Dom^B_m$ and $n\in N^+_\uf$ such that $p=m+nB$.
\end{theorem}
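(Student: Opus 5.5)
The plan is to combine the expansion of a pointed element in the theta basis (Lemma~\ref{point el}) with mutation invariance (Theorem~\ref{2 muts}).

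\textbf{Initial expansion.} Write $\v=\thet_{m_1}^{a_1}\cdots\thet_{m_\ell}^{a_\ell}$. Each $\thet_{m_i}$ is $z^{m_i}$ times a power series in $\k[[\zeta]]$ with constant term $1$, so $\v$ is $z^m$ times such a series. Lemma~\ref{point el}, applied with $\U$ the theta basis (Proposition~\ref{theta basis}), gives
\[\v=\thet_m+\sum_{n\in N^+_\uf}c_{m+nB,n}\,\sigma^n\thet_{m+nB}.\]
What remains is to show that $c_{p,n}\neq0$ forces $p\in\Dom^B_{m,\kk}$ for every sequence $\kk$. The case $\kk$ empty is exactly the displayed form.

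\textbf{Mutating.} Fix $\kk$ and apply Theorem~\ref{2 muts} repeatedly (equivalently Corollary~\ref{mut can}). This gives
\[\thet^{\tB}_q=\thet^{\mu_\kk(\tB)}_{\eta^B_\kk(q)}\cdot(\text{Laurent monomial in the }\sigma\text{'s}).\]
Since $m_1,\dots,m_\ell$ lie in one $B$-cone, the map $\eta^B_\kk$ is linear on the cone they span, so
\[\eta^B_\kk(m)=\sum_i a_i\,\eta^B_\kk(m_i).\]
Thus $\v$ equals a Laurent monomial in $\sigma$ times $\prod_i\bigl(\thet^{\mu_\kk(\tB)}_{\eta^B_\kk(m_i)}\bigr)^{a_i}$. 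In the $\mu_\kk(\tB)$ variables this product is pointed at $\eta^B_\kk(m)$. Applying Lemma~\ref{point el} for $\mu_\kk(\tB)$, whose coefficients are nondegenerate by the signed-nondegenerating hypothesis, gives an expansion of $\v$ as a monomial times
\[\thet'_{\eta_\kk(m)}+\sum_{n'\in N^+_\uf}c'\,(\sigma')^{n'}\thet'_{\eta_\kk(m)+n'\mu_\kk(B)}.\]

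\textbf{Comparing expansions.} Convert each $\thet'_{q'}$ back into $\thet^{\tB}_{(\eta^B_\kk)^{-1}(q')}$ times a Laurent monomial in $\sigma$; the $\sigma'$ are Laurent monomials in $\sigma$ because the lattices they span agree. This yields a second expansion of $\v$ in the reduced theta basis for $\tB$. By uniqueness (Proposition~\ref{theta basis}), the set of $p$ with some $c_{p,n}\neq0$ is contained in
\[(\eta^B_\kk)^{-1}\bigl\{\eta^B_\kk(m)+n'\mu_\kk(B):n'\in N^{0+}_\uf\bigr\}=\Dom^B_{m,\kk}.\]
Intersecting over all $\kk$ gives $p\in\Dom^B_m$. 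The relation $p=m+nB$ with $n\in N^+_\uf$ comes from the initial expansion. The leading term $\thet_m$ corresponds to $n'=0$, and this is consistent with $c_{m,0}=1$.

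\textbf{Main obstacle.} The delicate step is the linearity claim $\eta^B_\kk(m)=\sum_ia_i\eta^B_\kk(m_i)$, which should follow from the definition of $B$-cones: the sign vectors agree at every step, so the whole sequence of mutation maps acts linearly on their nonnegative span. A second point needs care: convergence and uniqueness must survive the substitution $\sigma'\leftrightarrow\sigma$. This is handled because multiplication by a fixed monomial and relabeling of basis elements preserve the finiteness of the $\g$-vector support.
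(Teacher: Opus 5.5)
Your argument is correct, but it reaches the statement by a more direct route than the paper.

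The paper never argues about $\Dom^B_m$ directly. It proves the sharper Theorem~\ref{B cone prod N} in three steps:
\begin{enumerate}
\item theta functions are pointed (Proposition~\ref{theta pointed});
\item a monomial in pointed elements whose $\g$-vectors lie in one $B$-cone is pointed (Lemma~\ref{mut point prod}), which uses linearity on $B$-cones of both $p\mapsto\kappa(p,\kk)$ and $p\mapsto\phi(p,\kk)$;
\item a pointed element expands over $\N_m^\tB$ (Theorem~\ref{mut point}), where the initial-seed expansion is pushed forward to seed $\kk$ and Lemma~\ref{point el} is applied there.
\end{enumerate}
Only then does Lemma~\ref{N in Dom} pass to $\Dom^B_m$.

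You instead compute the expansion afresh at seed $\kk$, pull it back, and invoke uniqueness at the initial seed. This lets you ignore the $\sigma$-monomial prefactors entirely. You need only linearity of $\eta^B_\kk$ on $B$-cones, and none of the bookkeeping with $\phi$, $\psi^\tB_\kk$ and $\nu^{(m)}_\kk$.

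The price is that you record only which $p$ occur. You do not capture the constraint $\nu^{(m)}_\kk(n)\in N^{0+}_\uf$ that ties $n$ to the seed-$\kk$ exponent. So you get the dominance-region statement, but not Theorem~\ref{B cone prod N}, which Example~\ref{Markov N} shows is strictly stronger. You also do not get the reusable characterization of pointed elements that the paper needs again for Theorem~\ref{point precise} and for the ray basis.

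On the convergence point you flag, your justification is off target. Finiteness of $\g$-vector support is not the whole issue: convergence also requires the $\zeta$-series to converge, and the completions at different seeds are different rings. However, the paper's proof of Theorem~\ref{mut point} makes the mirror-image transfer without comment: it applies Lemma~\ref{point el} at seed $\kk$ to the transported initial expansion. So your argument stands at the same level of rigor as the paper's.
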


As stated, Theorem~\ref{B cone prod} emphasizes that the only theta functions occurring in the expansion of $\thet_{m_1}^{a_1}\cdots\thet_{m_\ell}^{a_\ell}$ are $\thet_p$ for $p\in\Dom^B_m$.
Alternatively, the conclusion of the corollary can be written as $\thet_{m_1}^{a_1}\cdots\thet_{m_\ell}^{a_\ell}=\thet_m+\sum_nc_n\sigma^n\thet_{m+nB}$, summing over $n\in N^+_\uf$ such that $m+nB\in\Dom^B_m$, with coefficients $c_n\in\k$.  

We will prove Theorem~\ref{B cone prod} as a corollary of Theorem~\ref{B cone prod N}, below, which is stronger in the sense that it gives a smaller set of pairs $(n,p)\in N_\uf^+\times M^\circ_\uf$ such that~$\sigma^n\thet_p$ can appear with nonzero coefficient in the expansion of $\thet_{m_1}^{a_1}\cdots\thet_{m_\ell}^{a_\ell}$.
However, Theorem~\ref{B cone prod} has the advantage that it is phrased in terms of the dominance region~$\Dom^B_m$.
The dominance region takes considerable effort to compute, but has been successfully computed when $\tB=B$ is $2\times2$ and in affine type.
(See \cite{RupelStella} and \cite{affdomreg}.)
Theorem~\ref{B cone prod N} is phrased in terms of a subset of $N_\uf$ that is new to this paper and appears to be even more complicated.

We now prepare to state and prove the stronger theorem (Theorem~\ref{B cone prod N}).
One key to the proof is that $\thet_{m_1}^{a_1}\cdots\thet_{m_\ell}^{a_\ell}$ is a pointed element of $\can(\tB)$, in the sense that we now define.
Given $\kk$, write $(\mu_\kk(B),(z^{(\kk)}_i:i\in I_\uf),(\sigma^{(\kk)}_i:i\in I_\uf))$ for the seed obtained by mutating $(B,(z_i:i\in I_\uf),(\sigma_i:i\in I_\uf))$ along~$\kk$, and define $(\zeta^{(\kk)}_i:i\in I_\uf)$ accordingly.
As explained in Section~\ref{mut sec}, using \eqref{exch rel}--\eqref{hy mut} for each index in the sequence~$\kk$ we can write each $z_i$ and $\zeta_i$ in terms of the $z_i^{(\kk)}$ and $\zeta_i^{(\kk)}$.
An element $\u\in\can(\tB)$ is \newword{pointed} if it is $z^m$ (for some $m\in M^\circ_\uf$) times a formal power series in $\k[[\zeta]]$ with constant coefficient~$1$ and, for any sequence $\kk$, it is $(z^{(\kk)})^{\eta_\kk^{B}(m)}$ times a Laurent monomial in $\sigma^{(\kk)}$ times a formal power series in $\k[[\zeta^{(\kk)}]]$ with constant coefficient~$1$.
This is a version of a notion called ``pointed at the tropical point $[g]$'' in~\cite{FanQin}.
The motivating example of pointed elements are the theta functions.
Iterations of Theorem~\ref{2 muts} imply the following proposition.

\begin{proposition}\label{theta pointed}
Suppose $\tB$ has signed-nondegenerating coefficients.
The theta basis $\set{\thet_m:m\in M^\circ_\uf}$ consists of pointed elements.
\end{proposition}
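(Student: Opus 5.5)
We prove that each $\thet_m$ is pointed by induction on the length of the sequence $\kk$, using Theorem~\ref{2 muts} at each step. The first condition in the definition of pointed is immediate: $\thet_m=z^mF_m$ with $F_m\in\k[[\zeta]]$ having constant term $1$, as observed in Section~\ref{thet sec}.

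For the second condition, fix $\kk=k_q\cdots k_1$ and write $\kk_j=k_j\cdots k_1$, $\tB_j=\mu_{\kk_j}(\tB)$ and $m_j=\eta^B_{\kk_j}(m)$. Because $\tB$ has signed-nondegenerating coefficients, so does every $\tB_j$, since mutations of $\tB_j$ are mutations of $\tB$. Hence Theorem~\ref{2 muts} applies to $\tB_{j-1}$ and the index $k_j$. Since mutation maps compose as in \eqref{eta comp}, this gives
\[\thet^{\tB_j}_{m_j}=\thet^{\tB_{j-1}}_{m_{j-1}}\cdot\bigl(\sigma^{(\kk_{j-1})}_{k_j}\bigr)^{-e_j}\]
for some integer $e_j\ge0$. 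Chaining these identities from $j=1$ to $j=q$ yields
\[\thet^{\tB}_m=\thet^{\mu_\kk(\tB)}_{\eta^B_\kk(m)}\cdot\prod_j\bigl(\sigma^{(\kk_{j-1})}_{k_j}\bigr)^{e_j}.\]
Here the identification of variables is by iterating \eqref{exch rel}--\eqref{hy mut}, exactly as in Corollary~\ref{mut can}.

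Next we rewrite the product of $\sigma$'s. By the relations $\sigma'_k=\sigma_k^{-1}$ and $\sigma'_i=\sigma_i\sigma_k^{[\cdots]_+}$, each $\sigma^{(\kk_{j-1})}_{k_j}$ is a Laurent monomial in the $\sigma^{(\kk)}_i$; indeed all these families span the same lattice, as noted at the end of Section~\ref{coeff sec}. So the product is a Laurent monomial in $\sigma^{(\kk)}$. On the other side, $\thet^{\mu_\kk(\tB)}_{\eta^B_\kk(m)}$ is a theta function for $\mu_\kk(\tB)$ written in the variables $z^{(\kk)}$ and $\zeta^{(\kk)}$. By the same observation as for $\tB$, it equals $(z^{(\kk)})^{\eta^B_\kk(m)}$ times a formal power series in $\k[[\zeta^{(\kk)}]]$ with constant term $1$. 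Combining the two factors gives exactly the required form.

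The only point needing care is that these identities hold as identities of the same element after the change of variables. In particular, the substitutions \eqref{exch rel}--\eqref{hy mut} must be valid on formal power series, not just on polynomials. This is already built into Theorem~\ref{2 muts} and Corollary~\ref{mut can}, which are stated in exactly this sense. I therefore expect no real obstacle beyond this bookkeeping of the $\sigma$-monomials.
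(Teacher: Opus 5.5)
Your proposal is correct and follows the paper's approach: the paper proves this simply by noting that iterating Theorem~\ref{2 muts} gives the result. You have spelled out that iteration, including the checks that signed-nondegenerating coefficients pass to each $\mu_{\kk_j}(\tB)$ and that the accumulated factors $\sigma^{(\kk_{j-1})}_{k_j}$ are Laurent monomials in $\sigma^{(\kk)}$.
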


Another key to the proof is a precise characterization of pointed elements.
For each sequence $\kk$ and each ${m\in M^\circ_\uf}$, let $(\kappa(m,\kk),\phi(m,\kk))\in M^\circ_\uf\oplus N_\uf$ be such that~$\thet_m^{\tB}$ is $(\sigma^{(\kk)})^{\phi(m,\kk)}(z^{(\kk)})^{\kappa(m,\kk)}$ times a formal power series in $\k[[\zeta^{(\kk)}]]$ with constant coefficient~$1$.
The vectors $\kappa(m,\kk)$ and $\phi(m,\kk)$ exist in light of Proposition~\ref{theta pointed}.
Indeed, Theorem~\ref{2 muts} implies that $\kappa(m,\kk)=\eta_\kk^{B}(m)$.

We define a map from $N_\uf$ to itself that lets us write a Laurent monomial in $(\sigma_i:i\in I_\uf)$ in terms of $(\sigma^{(\kk)}_i:i\in I_\uf)$.
For an index $k$, define 
\begin{equation}\label{zeta k}
\psi_k^\tB(n)=n-2\br{e^*_k,n}e_k+\sum_{i\in I_\uf}\br{e^*_i,n}[\sgn(\sigma_k)\epsilon_{ik}]_+e_k,
\end{equation}
and for a sequence $\kk=k_q\cdots k_1$, define 
\begin{equation}\label{zeta kk}
\psi_\kk^\tB=\psi_{k_q}^{\mu_{k_{q-1},\ldots,k_1}(\tB)}\circ\psi^{\mu_{k_{q-2},\ldots,k_1}(\tB)}_{k_{q-1}}\circ\cdots\circ\psi^{\mu_{k_1}(\tB)}_{k_2}\circ\psi^\tB_{k_1}.
\end{equation}
When $\kk$ is the empty sequence, $\psi_\kk^\tB$ is the identity map on $N_\uf$.
Proposition~\ref{mutate subs} and an easy induction on $q$ shows that $\sigma^n=(\sigma^{(\kk)})^{\psi_\kk^\tB(n)}$ for any $n\in N_\uf$.
We emphasize that $\psi_\kk^\tB$ depends on $\tB$, not just $B$, because $\sgn(\sigma_k)$ appears in~\eqref{zeta k}.
For the same reason, we define $\psi_\kk^\tB$ only when $\tB$ has signed-nondegenerating coefficients.

Given $m\in M^\circ_\uf$ and $n\in N^+_\uf$ and a sequence $\kk$, define 
\[\nu^{(m)}_\kk\!(n)=\psi_\kk^\tB(n)+\phi(m+nB,\kk)-\phi(m,\kk).\]
For any $m\in M^\circ_\uf$ and sequence $\kk$, define
\[\N_{m,\kk}^\tB=\sett{n\in N^{0+}_\uf:\,\eta_\kk^B(m+nB)-\eta_\kk^B(m)=\nu^{(m)}_\kk\!(n)\cdot\mu_\kk(B),\,\,\,\nu^{(m)}_\kk\!(n)\in N^{0+}_\uf}\]
Finally, for any $m\in M^\circ_\uf$, define
\[\N_m^\tB=\bigcap_\kk\N_{m,\kk}^\tB\quad\text{(the intersection over all sequences $\kk$ of indices in $I_\uf$).}\]
We will prove the following theorem.

\begin{theorem}\label{mut point}
Suppose $\tB$ has signed-nondegenerating coefficients and suppose $\u\in\can(\tB)$.
The following are equivalent.
\begin{enumerate}[label=\rm(\roman*), ref=(\roman*)]
\item \label{is pointed}
$\u$ is pointed and $\g(\u)=m$.
\item \label{like theta}
For all sequences $\kk$, $\u$ is $(\sigma^{(\kk)})^{\phi(m,\kk)}(z^{(\kk)})^{\kappa(m,\kk)}$ times a formal power series in $\k[[\zeta^{(\kk)}]]$ with constant coefficient~$1$.
\item \label{N}
There exist constants $c_n\in\k$ with $c_0=1$ such that $\u=\sum_{n\in \N_m^\tB}c_n\,\sigma^n\thet_{m+nB}$.
\end{enumerate}
\end{theorem}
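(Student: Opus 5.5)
The plan is to prove \ref{like theta}$\Rightarrow$\ref{is pointed}, then \ref{is pointed}$\Rightarrow$\ref{N}, then \ref{N}$\Rightarrow$\ref{like theta}. The common tool is that the expansion of a pointed element in the theta basis can be read in any mutated seed. By Corollary~\ref{mut can}, $\can(\mu_\kk(\tB))=\can(\tB)$. By Theorem~\ref{2 muts} (iterated) and the identity $\sigma^n=(\sigma^{(\kk)})^{\psi^\tB_\kk(n)}$, each basis element $\sigma^n\thet^\tB_{m+nB}$ equals $(\sigma^{(\kk)})^{\psi^\tB_\kk(n)+\phi(m+nB,\kk)}$ times a pointed theta function for $\mu_\kk(\tB)$ with index $\eta^B_\kk(m+nB)$. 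So the reduced theta basis for $\tB$, multiplied by Laurent monomials in $\sigma$, is exactly the corresponding family for $\mu_\kk(\tB)$, only reindexed. Since $\mu_\kk(\tB)$ has nondegenerate coefficients, uniqueness of expansions (Proposition~\ref{theta basis}) holds in either seed.

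\ref{like theta}$\Rightarrow$\ref{is pointed} is immediate. For the empty sequence we have $\phi(m,\emptyset)=0$ and $\kappa(m,\emptyset)=m$, so $\u=z^mH$ with $H$ having constant term $1$; in particular $\g(\u)=m$. For general $\kk$, $\kappa(m,\kk)=\eta^B_\kk(m)$, which is exactly the definition of pointed.

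For \ref{is pointed}$\Rightarrow$\ref{N}, start from Lemma~\ref{point el}, which gives $\u=\sum_{n\in N^{0+}_\uf}c_n\sigma^n\thet_{m+nB}$ with $c_0=1$. It remains to show that $c_n\neq0$ forces $n\in\N^\tB_{m,\kk}$ for every $\kk$. Fix $\kk$ and rewrite each term in the seed $\kk$ as above. Pointedness says $\u$ is $(z^{(\kk)})^{\eta^B_\kk(m)}$ times a Laurent monomial in $\sigma^{(\kk)}$ times a series in $\k[[\zeta^{(\kk)}]]$ with constant term $1$. The term $n=0$ shows that this monomial must be $(\sigma^{(\kk)})^{\phi(m,\kk)}$, because the other terms cannot contribute the leading monomial of $\thet_m$; I would check this by the minimality argument below. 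After dividing by that monomial, apply Lemma~\ref{point el} in the seed $\mu_\kk(\tB)$ with the reduced basis $\set{\thet^{\mu_\kk(\tB)}_p}$. By uniqueness of expansions, the term indexed by $n$ must be $(\sigma^{(\kk)})^{\nu}$ times the theta function with index $\eta^B_\kk(m)+\nu\cdot\mu_\kk(B)$, for some $\nu\in N^{0+}_\uf$. Matching the two descriptions gives $\nu=\nu^{(m)}_\kk(n)$ and $\eta^B_\kk(m+nB)-\eta^B_\kk(m)=\nu^{(m)}_\kk(n)\cdot\mu_\kk(B)$, which is precisely membership in $\N^\tB_{m,\kk}$.

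For \ref{N}$\Rightarrow$\ref{like theta}, fix $\kk$. Each term $c_n\sigma^n\thet_{m+nB}$ with $n\in\N^\tB_{m,\kk}$ equals $(\sigma^{(\kk)})^{\phi(m,\kk)}(z^{(\kk)})^{\eta^B_\kk(m)}(\zeta^{(\kk)})^{\nu^{(m)}_\kk(n)}$ times a series in $\k[[\zeta^{(\kk)}]]$ with constant term $1$. To see this, use $\sigma^{(\kk)\,\nu}=(z^{(\kk)})^{-\nu\mu_\kk(B)}(\zeta^{(\kk)})^\nu$ together with the defining equation of $\N^\tB_{m,\kk}$. Since $\nu^{(m)}_\kk(n)\ge0$, the sum is the desired prefactor times a series in $\k[[\zeta^{(\kk)}]]$. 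This series converges because only finitely many $n$ share any given $\nu$, which I expect follows from injectivity below. Its constant term is $1$ provided $\nu^{(m)}_\kk(n)\neq0$ for $n\neq0$. This is the step I expect to be the main obstacle. My plan is as follows. If $\nu^{(m)}_\kk(n)=0$, then $\eta^B_\kk(m+nB)=\eta^B_\kk(m)$ and the $\sigma^{(\kk)}$-exponents agree. Then $\sigma^n\thet_{m+nB}$ and $\thet_m$ correspond to the same element of the reindexed basis for $\mu_\kk(\tB)$, so they are equal. But distinct elements of $\set{\sigma^n\thet_p}$ are distinct basis elements by Proposition~\ref{theta basis}, which forces $n=0$. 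The same injectivity gives the finiteness needed for convergence.
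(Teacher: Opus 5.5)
Your outline follows the paper's proof. It uses Lemma~\ref{point el} at the initial seed, the reindexing $\sigma^n\thet_{m+nB}=(\sigma^{(\kk)})^{\psi^\tB_\kk(n)+\phi(m+nB,\kk)}\thet^{\mu_\kk(\tB)}_{\eta^B_\kk(m+nB)}$ coming from Theorem~\ref{2 muts}, Lemma~\ref{point el} again at seed $\kk$, and direct substitution for (iii)$\Rightarrow$(ii). In that last step you are more careful than the paper. Injectivity of $n\mapsto\nu^{(m)}_\kk(n)$ on $\N^\tB_{m,\kk}$ holds: from $\nu$ you recover $\eta^B_\kk(m+nB)$, hence $nB$, hence $\psi^\tB_\kk(n)$, hence $n$. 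This does give both the constant term $1$ and convergence, which the paper leaves implicit.

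The gap is in (i)$\Rightarrow$(iii), where you assert that the Laurent monomial $(\sigma^{(\kk)})^{a}$ in the pointed form of $\u$ at seed $\kk$ is $(\sigma^{(\kk)})^{\phi(m,\kk)}$. This is the whole content of (i)$\Rightarrow$(ii), and your reason does not establish it. Even if no other term hits the leading monomial of $\thet_m$, another term could produce a strictly lower monomial, which would then be the leading monomial of $\u$. There is also no minimality argument below to invoke, since your injectivity argument presupposes $n\in\N^\tB_{m,\kk}$, which is what you are proving. Comparing the $n=0$ term with Lemma~\ref{point el} at seed $\kk$ yields only $\phi(m,\kk)-a\in N^{0+}_\uf$ with $(\phi(m,\kk)-a)\cdot\mu_\kk(B)=0$.

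The claim is in fact false in general. Take $B=[0]$, $\tB=[\,0\ \ 1\,]$ and $\u=1+\sigma_1=\thet_0+\sigma_1\thet_0$. Here $\zeta_1=\sigma_1$, so $\u$ is $1+\zeta_1$ at the initial seed and $(\sigma'_1)^{-1}(1+\zeta'_1)$ at seed $1$, and the seeds simply alternate. Thus $\u$ is pointed with $\g(\u)=0$, but $a=-e_1\neq0=\phi(0,1)$. Moreover $\psi^\tB_1(n)=-n$, so $\N^\tB_0=\set{0}$ and (iii) fails as well.

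The paper treats this step separately, by induction on $\kk$, using that pointedness dictates the $z^{(\kk)}$-exponent. That argument only determines $a\cdot\mu_\kk(B)$, so it tacitly needs $B$ to be nonsingular (rank is mutation-invariant). The example above shows that some such hypothesis is genuinely required. Under that hypothesis your route closes in one line: $(\phi(m,\kk)-a)\cdot\mu_\kk(B)=0$ forces $a=\phi(m,\kk)$, after which your matching argument gives $n\in\N^\tB_{m,\kk}$.
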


Since $\kappa(m,\kk)=\eta_\kk^{B}(m)$, the equivalence of Conditions~\ref{is pointed} and~\ref{like theta} in Theorem~\ref{mut point} amounts to a specification of the monomials in $\sigma^{(\kk)}$ that appear when a pointed element is written as $(z^{(\kk)})^{\eta_\kk^{B}(m)}$ times a Laurent monomial in $\sigma^{(\kk)}$ times a formal power series in $\k[[\zeta^{(\kk)}]]$ with constant coefficient~$1$.

Recall that $\can(\mu_\kk(\tB))=\can(\tB)$ for any sequence $\kk$ by Corollary~\ref{mut can}.
However, the notion of a reduced basis for $\can(\tB)$ from Section~\ref{bases sec} implicitly chooses the description of the small canonical algebra as $\can(\tB)$ rather than $\can(\mu_\kk(\tB))$.
For the proof of Theorem~\ref{mut point}, it will be useful to make that choice explicit.  

A subset~$\U\subseteq\can(\tB)$ is a \newword{reduced basis at $\kk$} if $\set{\sigma^n\u:n\in N_\uf,\u\in\U}$ is a basis for $\can(\tB)$ and $\U$ is of the form $\set{\u_m:m\in M^\circ_\uf}$ such that each $\u_m$ is $(z^{(\kk)})^m$ times a formal power series in $\k[[\zeta^{(\kk)}]]$ with constant coefficient~$1$.
(Because~$\tB$ has signed-nondegenerating coefficients, we can write $\set{\sigma^n\u:n\in N_\uf,\u\in\U}$ or $\set{(\sigma^{(\kk)})^n\u:n\in N_\uf,\u\in\U}$ interchangeably.)
The term ``reduced basis'', without referring to a specific $\kk$, will continue to mean a reduced basis at the initial seed ($\kk=\emptyset$).

An element $\u$ is \newword{pointed at $\kk$} if it is $(z^{(\kk)})^m$ (for some $m\in M^\circ_\uf$) times a formal power series in $\k[[\zeta^{(\kk)}]]$ with constant coefficient~$1$ and, for any sequence $\ellell$, it is $(z^{(\ellell\kk)})^{\eta_{\ellell}^{\mu_{\kk}(B)}(m)}$ times a Laurent monomial in $\sigma^{(\ellell\kk)}$ times a formal power series in $\k[[\zeta^{(\ellell\kk)}]]$ with constant coefficient~$1$.
The term ``pointed'', not referring to~$\kk$, will continue to mean pointed at $\kk=\emptyset$.

\begin{proof}[Proof of Theorem~\ref{mut point}]
Suppose \ref{is pointed}.
By definition, there is an expression for $\u$ as~$z^m$ times a formal power series in $H\in\k[[\zeta]]$ with constant coefficient~$1$.
Rewrite this expression, using iterations of Proposition~\ref{mutate subs}, to obtain an expression of $\u$ in terms of $z^{(\kk)}$, $\sigma^{(\kk)}$, and $\zeta^{(\kk)}$.  
Since $\u$ is pointed, this is $(z^{(\kk)})^{\eta_\kk^{B}(m)}$ times a Laurent monomial in $\sigma^{(\kk)}$ times a formal power series in $\k[[\zeta^{(\kk)}]]$ with constant coefficient~$1$.   
By induction on the length of $\kk$, because the exponent on the $z^{(\kk)}$ is dictated by the definition of a pointed element, we see that the exponent vector on $\sigma^{(\kk)}$ depends only on~$m$ and $\kk$, not on $H$.
Thus these exponent vectors are the same as if $\u$ were $\thet_m$.
That is, they are $\phi(m,\kk)$ and $\kappa(m,\kk)$.
We have showed that \ref{is pointed} implies \ref{like theta}.
The converse implication is immediate, so \ref{is pointed} and \ref{like theta} are equivalent.

Suppose \ref{is pointed} and \ref{like theta} hold.
Since $\u\in\can(\tB)$, we can express $\u$ in the theta basis as $\u=\sum_{p\in M^\circ_\uf}\sum_{n\in N_\uf}c_{p,n}\,\sigma^n \thet_p$.
Suppose $c_{p,n}\neq0$ for some $p\in M^\circ_\uf$ and $n\in N_\uf$.
Lemma~\ref{point el}.\ref{c B n} says that $p=m+nB$.

Choose a sequence $\kk$, so that \ref{like theta} says that $\u=(\sigma^{(\kk)})^{\phi(m,\kk)}(z^{(\kk)})^{\kappa(m,\kk)}$ times a formal power series in $\k[[\zeta^{(\kk)}]]$ with constant coefficient~$1$.
Write $\u^{(\kk)}$ to stand for $\u\cdot(\sigma^{(\kk)})^{-\phi(m,\kk)}$.
Since $\u$ is pointed (at $\emptyset$), also $\u^{(\kk)}$ is pointed at $\kk$.
Each~$\thet_p$ is $(\sigma^{(\kk)})^{\phi(p,\kk)}(z^{(\kk)})^{\kappa(p,\kk)}$ times a formal power series in $\k[[\zeta^{(\kk)}]]$ with constant coefficient~$1$.
Write $\thet^{(\kk)}_{\kappa(p,\kk)}$ for $(\sigma^{(\kk)})^{-\phi(p,\kk)}\thet_p$.
Theorem~\ref{pointed set basis} says that the set of all these~$\thet^{(\kk)}_{p^{(\kk)}}$ is a reduced basis at~$\kk$.

Starting from $\u=\sum_{p\in M^\circ_\uf}\sum_{n\in N_\uf}c_{p,n}\,\sigma^n \thet_p$, we replace $\u$ and each $\thet_p$ by appropriate expressions in terms of $\u^{(\kk)}$ and $\thet^{(\kk)}_{\kappa(p,\kk)}$ and rewrite $\sigma^n$ as $(\sigma^{(\kk)})^{\psi_\kk^\tB(n)}$ to obtain 
\begin{align*}
\u^{(\kk)}&=\sum_{p\in M^\circ_\uf}\sum_{n\in N_\uf}c_{p,n}\,(\sigma^{(\kk)})^{\psi_\kk^\tB(n)+\phi(p,\kk)-\phi(m,\kk)} \thet^{(\kk)}_{\kappa(p,\kk)}\\
&=\sum_{p\in M^\circ_\uf}\sum_{n\in N_\uf}c_{p,n}\,(\sigma^{(\kk)})^{\nu_\kk^{(m)}\!(n)} \thet^{(\kk)}_{\kappa(p,\kk)}.
\end{align*}
Since $c_{p,n}\neq0$, applying Lemma~\ref{point el}.\ref{c B n} for the element~$\u^{(\kk)}$, which is pointed at~$\kk$ and the set of elements $\thet^{(\kk)}_{p^{(\kk)}}$, which forms a reduced basis at~$\kk$, we see that $\nu_\kk^{(m)}\!(n)\in N_\uf^{0+}$ and $\kappa(p,\kk)=\kappa(m,\kk)+\nu_\kk^{(m)}\!(n)\cdot\mu_\kk(B)$.
The latter can be rewritten as $\eta_\kk^B(m+nB)-\eta_\kk^B(m)=\nu_\kk^{(m)}\!(n)\cdot\mu_\kk(B)$.
This is true for any sequence $\kk$, so $n\in\N_m^\tB$.
Thus, setting $c_n=c_{m+nB,n}$ we have $\u=\sum_{n\in\N_m^\tB}c_n\sigma^n\thet_{m+nB}$, with $c_0=1$ (by \ref{like theta} at $\kk=\emptyset$).
We have shown that \ref{is pointed} and \ref{like theta} imply \ref{N}.

Finally, suppose $\u=\sum_{n\in\N_m^\tB}c_n\sigma^n\thet_{m+nB}$ with $c_0=1$ and let $\kk$ be any sequence.
Each~$\thet_{m+nB}$ in the sum is $(\sigma^{(\kk)})^{\phi(m+nB,\kk)}(z^{(\kk)})^{\kappa(m+nB,\kk)}$ times a formal power series $F_{m+nB}^{(\kk)}$ in $\k[[\zeta^{(\kk)}]]$ with constant coefficient~$1$.  
Replacing each $\thet_{m+nB}$, replacing $\sigma^n$ by $(\sigma^{(\kk)})^{\psi_\kk^\tB(n)}$, and simplifying, we see that 
\[\u=(\sigma^{(\kk)})^{\phi(m,\kk)}(z^{(\kk)})^{\kappa(m,\kk)}\sum_{n\in\N_m^\tB}c_n(z^{(\kk)})^{\eta_\kk^B(m+nB)-\eta_\kk^B(m)}(\sigma^{(\kk)})^{\nu^{(m)}_\kk\!(n)}F_{m+nB}^{(\kk)}.\]
Since each $n$ in the sum is in $\N_m^\tB$ and since $(z^{(\kk)})^{\nu B}(\sigma^{(\kk)})^\nu=(\zeta^{(\kk)})^\nu$ for all $\nu\in N_\uf$, the sum is 
$\sum_{n\in\N_m^\tB}c_n(\zeta^{(\kk)})^{\nu^{(m)}_\kk\!(n)}F_{m+nB}^{(\kk)}$,
a formal power series in $\kk[[\zeta^{(\kk)}]]$ with constant coefficient~$1$.
Since this is true for all sequences~$\kk$, we have established \ref{like theta} and thus completed the proof that~\ref{is pointed},~\ref{like theta}, and~\ref{N} are all equivalent.
\end{proof}

We now state the stronger theorem that will imply Theorem~\ref{B cone prod}.

\begin{theorem}\label{B cone prod N}  
Suppose that $\tB$ has signed-nondegenerating coefficients and that $m_1,\ldots,m_\ell$ are all contained in the same $B$-cone.
Write $m=a_1m_1+\cdots+a_\ell m_\ell$ for nonnegative integers $a_1,\ldots,a_\ell$.
Then there exist constants $c_n\in\k$ with $c_0=1$ such that $\thet_{m_1}^{a_1}\cdots\thet_{m_\ell}^{a_\ell}=\sum_{n\in\N_m^\tB}c_n\sigma^n\thet_{m+nB}$.  
\end{theorem}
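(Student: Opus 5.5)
The plan is to apply Theorem~\ref{mut point}: we show that $\u=\thet_{m_1}^{a_1}\cdots\thet_{m_\ell}^{a_\ell}$ is pointed with $\g(\u)=m$. Condition~\ref{N} of that theorem is then exactly the desired conclusion. The product lies in $\can(\tB)$ by definition, so everything reduces to checking pointedness.

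First, at the initial seed each $\thet_{m_i}$ is $z^{m_i}F_{m_i}$ with $F_{m_i}\in\k[[\zeta]]$ having constant term $1$. Hence $\u=z^m\prod F_{m_i}^{a_i}$, and the product of the $F$-series again has constant term $1$. In particular $\g(\u)=m$.

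Next, fix an arbitrary sequence $\kk$. By Proposition~\ref{theta pointed}, each $\thet_{m_i}$ is $(z^{(\kk)})^{\eta_\kk^B(m_i)}$ times a Laurent monomial in $\sigma^{(\kk)}$ times a series in $\k[[\zeta^{(\kk)}]]$ with constant term $1$. Multiplying these expressions, $\u$ becomes $(z^{(\kk)})^{\sum a_i\eta^B_\kk(m_i)}$ times a Laurent monomial in $\sigma^{(\kk)}$ times a series with constant term $1$.

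The key step, and the only real content, is to show that $\sum_i a_i\eta_\kk^B(m_i)=\eta_\kk^B(m)$. Here we use that $m_1,\ldots,m_\ell$ lie in a common $B$-cone $C$. I expect $\eta_\kk^B$ to be linear on each $B$-cone, and I would argue this by induction on the length of $\kk$ using the factorization~\eqref{eta comp}. The single-step map $\eta_{k_j}^{B_j}$ is linear on each closed halfspace bounded by $e_{k_j}^\perp$. By definition of $B$-equivalence, all points of the $B$-class have the same sign vector after applying $\eta^B_{k_{j-1}\cdots k_1}$, so they lie weakly on the same side of $e_{k_j}^\perp$. By continuity this persists on the closure $C$. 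So at each step the whole image of $C$ lies in one halfspace where the next map is linear.

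Composing these steps, $\eta_\kk^B$ restricted to $C$ agrees with a linear map. Since $C$ is a convex cone, the combination $m=\sum a_im_i$ with $a_i\ge0$ lies in $C$, and linearity gives the identity. With it, $\u$ satisfies the definition of a pointed element, and Theorem~\ref{mut point} (the implication \ref{is pointed}$\Rightarrow$\ref{N}) yields $\u=\sum_{n\in\N_m^\tB}c_n\sigma^n\thet_{m+nB}$ with $c_0=1$.
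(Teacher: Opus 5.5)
Your proof is correct. Its overall architecture matches the paper's: theta functions are pointed (Proposition~\ref{theta pointed}), the product is pointed, and Theorem~\ref{mut point} turns pointedness into the expansion over $\N_m^\tB$.

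The difference lies in how you show that the product is pointed. The paper's Lemma~\ref{mut point prod} proceeds in three steps:
\begin{enumerate}
\item It uses the equivalence of conditions (i) and (ii) of Theorem~\ref{mut point} to write each factor with the specific $\sigma^{(\kk)}$-exponent $\phi(m_i,\kk)$.
\item It invokes Lemma~\ref{kap phi lin}, which gives linearity on $B$-cones of both $\kappa(\cdot,\kk)=\eta_\kk^B$ and $\phi(\cdot,\kk)$. The $\phi$ part is proved by induction via Lemma~\ref{kappas and phis}.
\item It applies Theorem~\ref{mut point} a second time to conclude pointedness.
\end{enumerate}

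You instead observe that the definition of a pointed element only asks for \emph{some} Laurent monomial in $\sigma^{(\kk)}$. So only the $z^{(\kk)}$-exponent needs to be controlled, and that requires nothing beyond linearity of $\eta_\kk^B$ on $B$-cones. You prove that linearity directly, from the definition of $B$-equivalence, continuity, and the halfspace-linearity of single-step mutation maps, rather than citing it as the paper does.

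Your route is therefore shorter and bypasses Lemma~\ref{kappas and phis} and the $\phi$-half of Lemma~\ref{kap phi lin} entirely. It also works verbatim for arbitrary pointed factors, so it proves Lemma~\ref{mut point prod} in the generality the paper later uses.

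What the paper's route buys is the linearity of $\phi(\cdot,\kk)$ on $B$-cones as a standalone fact, together with the explicit $\sigma^{(\kk)}$-exponent of the product. But once the product is known to be pointed, the implication (i)$\Rightarrow$(ii) of Theorem~\ref{mut point} forces that exponent to be $\phi(m,\kk)$ anyway. So nothing needed for this theorem is lost.
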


We now discuss the relationship between Theorems~\ref{B cone prod} and~\ref{B cone prod N} by giving first, a lemma that shows that Theorem~\ref{B cone prod N} implies Theorem~\ref{B cone prod} and second, an example that shows that Theorem~\ref{B cone prod N} is strictly stronger.

\begin{lemma}\label{N in Dom}
For any exchange matrix $B$, if $n\in\N_m^\tB$ then $m+nB\in\Dom^B_m$.
\end{lemma}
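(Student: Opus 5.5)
This is essentially an unwinding of definitions, done one sequence $\kk$ at a time. Fix $n\in\N_m^\tB$ and write $p=m+nB$. We must show $p\in\Dom^B_{m,\kk}$ for every sequence $\kk$, that is,
\[\eta_\kk^B(p)\in\sett{\eta_\kk^B(m)+n'\cdot\mu_\kk(B):n'\in N^{0+}_\uf}.\]
Since $\eta_\kk^B$ is a bijection on $M^\circ_\uf$, this is equivalent to membership of $p$ in $\bigl(\eta_\kk^B\bigr)^{-1}$ of that set. The intersection over all $\kk$ then gives $p\in\Dom^B_m$.

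Now fix $\kk$. Because $n\in\N_m^\tB\subseteq\N_{m,\kk}^\tB$, the definition of $\N_{m,\kk}^\tB$ supplies exactly the vector we need. Take $n'=\nu^{(m)}_\kk\!(n)$. By that definition $n'\in N^{0+}_\uf$, and
\[\eta_\kk^B(m+nB)-\eta_\kk^B(m)=n'\cdot\mu_\kk(B).\]
Hence $\eta_\kk^B(p)=\eta_\kk^B(m)+n'\cdot\mu_\kk(B)$ with $n'$ nonnegative, so $p\in\Dom^B_{m,\kk}$.

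I do not expect a real obstacle here. The only points to check are notational. First, the integral dominance region is the one intended: $n'$ is integral and nonnegative, which is what $\Dom^B_m$ requires. Second, the product $n'\cdot\mu_\kk(B)$ in $\N_{m,\kk}^\tB$ has the same meaning (row vector times the matrix $\mu_\kk(B)$) as in the definition of $\Dom^B_{m,\kk}$. Third, $\eta_\kk^B$ maps $M^\circ_\uf$ to itself, so the preimage condition is well posed.

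Although the lemma is stated "for any exchange matrix $B$", $\N_m^\tB$ is only defined when $\tB$ has signed-nondegenerating coefficients (through $\psi_\kk^\tB$ and $\phi$). The argument is therefore carried out under that standing assumption, and it never uses anything about $\tB$ beyond the definitions.
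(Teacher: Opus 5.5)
Your proof is correct and is exactly the paper's argument: for each $\kk$, the definition of $\N_{m,\kk}^\tB$ supplies $\nu^{(m)}_\kk\!(n)\in N^{0+}_\uf$ with $\eta_\kk^B(m+nB)=\eta_\kk^B(m)+\nu^{(m)}_\kk\!(n)\cdot\mu_\kk(B)$, so $m+nB\in\Dom^B_{m,\kk}$, and intersecting over all $\kk$ gives $m+nB\in\Dom^B_m$. One small point: you don't need bijectivity of $\eta_\kk^B$, since $p$ lies in a preimage exactly when $\eta_\kk^B(p)$ lies in the set.
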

\begin{proof}
Suppose $n\in\N_m^\tB$.
For any $\kk$, $\eta_\kk^B(m+nB)-\eta_\kk^B(m)=\nu^{(m)}_\kk\!(n)\cdot\mu_\kk(B)$ and $\nu^{(m)}_\kk\!(n)\in N^{0+}_\uf$.
Thus 
\[m+nB\in\bigl(\eta_{\kk}^B\bigr)^{-1}\sett{\eta_\kk^B(m)+\nu\cdot\mu_\kk(B):\nu\in N^{0+}_\uf}=\Dom^B_{m,\kk}.\qedhere\]
\end{proof}

\begin{example}\label{Markov N}  
Consider 
$B=\begin{bsmallmatrix*}[r]
0&2&-2\\
-2&0&2\\
2&-2&0
\end{bsmallmatrix*}$
and 
$\tB=\begin{bsmallmatrix*}[r]
0&2&-2&\,\,\,1&0&0\\
-2&0&2&0&\,\,\,1&0\\
2&-2&0&0&0&\,\,\,1
\end{bsmallmatrix*}$.
This is the signed-adjacency matrix of the once-punctured torus, also known as the \newword{Markov quiver}, with principal coefficients.
Any mutation of $B$ is $\mu_\kk(B)=\pm B$, with the sign given by the parity of the length of $\kk$.
One can check that all mutation maps $\eta^B_\kk$ preserve the sum of the $f_i$-coordinates.
One can also check that the nonnegative span of $B$ or $-B$ is the subspace consisting of vectors whose $f_i$-coordinates sum to~$0$.
From there, it is not difficult to see that $\Dom^B_{m,\kk}$ is the set of all vectors whose $f_i$-coordinates have the same sum as the sum of $f_i$-coordinates of $m$.
Thus $\Dom_m^B$ has the same description, so Theorem~\ref{B cone prod} allows terms $c_n\sigma^n\thet_{m+nB}$ for all $n\in N_\uf^{0+}$.
Theorem~\ref{B cone prod N} allows fewer of these terms.
For example, writing $m=[1,1,1]$ and $n=[n_1,n_2,n_3]$ and taking $\kk$ to be the singleton sequence $1$, we compute 
\begin{align*}
\psi_1^B(n)&=[-n_1+2n_3,n_2,n_3]\\
\phi(m+nB,1)&=\bigl[-[1-2n_2+2n_3]_+,0,0\bigr]\\
\phi(m,1)&=[-1,0,0]\\
\nu_1^{(m)}(n)&=
\begin{cases}
[-n_1+2n_3+1,n_2,n_3]&\text{if }n_2>n_3\\
[-n_1+2n_2,n_2,n_3]&\text{if }n_2\le n_3.\\
\end{cases}
\end{align*}
The requirement that $\nu_1^{(m)}(n)\in N_\uf^{0+}$ amounts to $n_1\le\max(2n_2,2n_3+1)$, so $\N_{m,1}^\tB$ is strictly smaller than $N_\uf^{0+}$.
Interestingly, the requirement that $\eta_1^B(m+nB)-\eta_1^B(m)$ equals $\nu^{(m)}_1\!(n)\cdot\mu_1(B)$ is vacuous, so 
\[\N_{m,1}^\tB=\set{n=[n_1,n_2,n_3]\in N_\uf^{0+}:n_1\le\max(2n_2,2n_3+1)}.\]
\end{example}

In light of Theorem~\ref{mut point}, we can prove Theorem~\ref{B cone prod N} by showing that the monomial $\thet_{m_1}^{a_1}\cdots\thet_{m_\ell}^{a_\ell}$ is pointed.
That fact is the last of the following three lemmas.

\begin{lemma}\label{kappas and phis}
Suppose $\tB$ has signed-nondegenerating coefficients.
Given $p\in M^\circ_\uf$, a sequence $\kk$ of indices in $I_\uf$, and an index $j\in I_\uf$,
\begin{enumerate}[label=\bf\arabic*., ref=\arabic*]
\item \label{kappas}
$\kappa(p,j\kk)=\eta^{\mu_{\kk}(B)}_j(\kappa(p,\kk))$, and
\item \label{phis}
$\phi(p,j\kk)=\psi_j^{\mu_\kk(\tB)}(\phi(p,\kk))-[\sgn(\sigma^{(\kk)}_j)\br{\kappa(p,\kk),d_je_j}]_+e_j$.
\end{enumerate}  
\end{lemma}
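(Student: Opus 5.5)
The plan is to apply Theorem~\ref{2 muts} once, with the matrix $\mu_\kk(\tB)$ in the role of $\tB$ and the single index $j$, and then compare the result with the defining property of $(\kappa(p,j\kk),\phi(p,j\kk))$. The hypothesis that $\tB$ has signed-nondegenerating coefficients passes to $\mu_\kk(\tB)$, so that application is legitimate.

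First I would set up the theta function at the seed $\kk$. By definition of $\kappa(p,\kk)$ and $\phi(p,\kk)$, the element $\thet^\tB_p$ equals $(\sigma^{(\kk)})^{\phi(p,\kk)}\,\thet^{\mu_\kk(\tB)}_{\kappa(p,\kk)}$. This uses that, by iterated Theorem~\ref{2 muts}, the element $\thet^{\mu_\kk(\tB)}_{\kappa(p,\kk)}$ is $(z^{(\kk)})^{\kappa(p,\kk)}$ times a series in $\k[[\zeta^{(\kk)}]]$ with constant coefficient~$1$. The decomposition into a $z^{(\kk)}$-monomial, a $\sigma^{(\kk)}$-monomial and a series with constant term~$1$ is unique because of nondegenerate coefficients, in the same way as the $\g$-vector grading argument.

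Next I would apply Theorem~\ref{2 muts} to $\mu_\kk(\tB)$ at index $j$. This gives
\[\thet^{\mu_{j\kk}(\tB)}_{\eta^{\mu_\kk(B)}_j(\kappa(p,\kk))}=\thet^{\mu_\kk(\tB)}_{\kappa(p,\kk)}\cdot(\sigma^{(\kk)}_j)^{-[\sgn(\sigma^{(\kk)}_j)\br{\kappa(p,\kk),d_je_j}]_+}.\]
I would then rewrite $(\sigma^{(\kk)})^{\phi(p,\kk)}$ in the $\sigma^{(j\kk)}$ variables using $(\sigma^{(\kk)})^{n}=(\sigma^{(j\kk)})^{\psi^{\mu_\kk(\tB)}_j(n)}$, which is Proposition~\ref{mutate subs} for a single step. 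Since $\sigma^{(j\kk)}_j=(\sigma^{(\kk)}_j)^{-1}$, the correction factor $(\sigma^{(\kk)}_j)^{[\cdots]_+}$ becomes $(\sigma^{(j\kk)}_j)^{-[\cdots]_+}$. Combining the two displays gives
\[\thet^\tB_p=(\sigma^{(j\kk)})^{\psi_j^{\mu_\kk(\tB)}(\phi(p,\kk))-[\sgn(\sigma^{(\kk)}_j)\br{\kappa(p,\kk),d_je_j}]_+e_j}\cdot\thet^{\mu_{j\kk}(\tB)}_{\eta^{\mu_\kk(B)}_j(\kappa(p,\kk))}.\]
The last factor is $(z^{(j\kk)})^{\eta_j^{\mu_\kk(B)}(\kappa(p,\kk))}$ times a series in $\k[[\zeta^{(j\kk)}]]$ with constant term~$1$. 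Uniqueness of the decomposition then reads off both assertions at once.

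The main obstacle is bookkeeping rather than substance. I need to be sure that the sign in the correction exponent is indeed $\sgn(\sigma^{(\kk)}_j)$ for the seed $\mu_\kk(\tB)$, and that the primed variables of Theorem~\ref{2 muts} are exactly the $j\kk$-seed variables. I also need to double-check the direction of the inversion $\sigma'_j=\sigma_j^{-1}$, so that the sign in front of the $[\,\cdot\,]_+e_j$ term comes out as stated. Assertion~\ref{kappas} also follows directly from $\kappa(p,\kk)=\eta_\kk^B(p)$ together with the composition rule \eqref{eta comp}, which gives an independent check.
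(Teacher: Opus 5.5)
Your proposal is correct and matches the paper's proof. The paper likewise writes $\thet^\tB_p=\thet^{\mu_\kk(\tB)}_{\kappa(p,\kk)}\cdot(\sigma^{(\kk)})^{\phi(p,\kk)}$, applies Theorem~\ref{2 muts} once more to $\mu_\kk(\tB)$ at index $j$, and converts both $(\sigma^{(\kk)})^{\phi(p,\kk)}$ and the correction factor into $\sigma^{(j\kk)}$-monomials via $\psi_j^{\mu_\kk(\tB)}$ and $\sigma^{(j\kk)}_j=(\sigma^{(\kk)}_j)^{-1}$, with Assertion~1 read off directly from Theorem~\ref{2 muts}. Your explicit appeal to uniqueness of the decomposition, which holds because the coefficients are nondegenerate at every seed, is left implicit in the paper's definition of $\kappa$ and $\phi$.
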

\begin{proof}
Assertion~\ref{kappas} is immediate from Theorem~\ref{2 muts}.
Assertion~\ref{phis} follows from the same theorem, as we now explain.

By Theorem~\ref{2 muts} and the definition of $\phi(p,\kk)$, we have $\thet_p^\tB=\thet_{\kappa(p,\kk)}^{\mu_\kk(\tB)}\cdot(\sigma^{(\kk)})^{\phi(p,\kk)}$.
Then, by Theorem~\ref{2 muts} again, 
\[\thet_{\kappa(p,\kk)}^{\mu_\kk(\tB)}=\thet_{\kappa(p,j\kk)}^{\mu_{j\kk}(\tB)}\cdot(\sigma_j^{(j\kk)})^{-[\sgn(\sigma_j^{(\kk)})\br{\kappa(p,\kk),d_je_j}]_+}.\]
Thus
\begin{align*}
\thet_p^\tB
&=\thet_{\kappa(p,j\kk)}^{\mu_{j\kk}(\tB)}\cdot(\sigma_j^{(j\kk)})^{-[\sgn(\sigma_j^{(\kk)})\br{\kappa(p,\kk),d_je_j}]_+}\cdot(\sigma^{(\kk)})^{\phi(p,\kk)}\\
&=\thet_{\kappa(p,j\kk)}^{\mu_{j\kk}(\tB)}\cdot(\sigma^{(j\kk)})^{-[\sgn(\sigma_j^{(\kk)})\br{\kappa(p,\kk),d_je_j}]_+e_j+\psi_j^{\mu_\kk(\tB)}(\phi(p,\kk))}\hfill\qedhere
\end{align*}
\end{proof}

\begin{lemma}\label{kap phi lin}
For every sequence $\kk$ the maps $p\mapsto\kappa(p,\kk)$ and $p\mapsto\phi(p,\kk)$ are each linear on every $B$-cone.
\end{lemma}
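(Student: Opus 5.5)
We argue by induction on the length of $\kk$, using the recursions in Lemma~\ref{kappas and phis}. For $\kk$ empty, $\kappa(p,\emptyset)=p$ and $\phi(p,\emptyset)=0$, since $\thet_p=z^pF_p$ with $F_p$ having constant term~$1$. Both maps are therefore linear on all of $M^\circ_\uf$, and in particular on every $B$-cone.

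For the inductive step, fix a $B$-cone $C$ and suppose $p\mapsto\kappa(p,\kk)$ and $p\mapsto\phi(p,\kk)$ are linear on $C$. Since $\kappa(p,\kk)=\eta^B_\kk(p)$, and by definition of $B$-equivalence all points of $C$ have the same sign vector after applying $\eta^B_\kk$, the image $\eta^B_\kk(C)$ lies weakly on one side of $e_j^\perp$ for each $j$. Hence $\br{\kappa(p,\kk),d_je_j}$ has constant (weak) sign on $C$, so $p\mapsto\br{\kappa(p,\kk),d_je_j}$ is a linear functional on $C$ with constant sign there. The single-step map $\eta^{\mu_\kk(B)}_j$ is linear on each closed halfspace bounded by $e_j^\perp$. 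Composing it with the linear map $p\mapsto\kappa(p,\kk)$ on $C$ shows, by Assertion~\ref{kappas} of Lemma~\ref{kappas and phis}, that $p\mapsto\kappa(p,j\kk)$ is linear on $C$.

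For $\phi$ we use Assertion~\ref{phis}. The map $\psi_j^{\mu_\kk(\tB)}$ is a linear map on $N_\uf$ (see \eqref{zeta k}; the sign $\sgn(\sigma^{(\kk)}_j)$ is a fixed constant), so $p\mapsto\psi_j^{\mu_\kk(\tB)}(\phi(p,\kk))$ is linear on $C$. In the correction term $[\sgn(\sigma^{(\kk)}_j)\br{\kappa(p,\kk),d_je_j}]_+e_j$, the quantity inside the brackets is a linear functional of $p$ on $C$ with constant sign. Thus $[\cdot]_+$ restricted to $C$ is either this linear functional or identically zero, and either way it is linear. This completes the induction.

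The only point needing care is the claim that a $B$-cone (a closure of a $B$-class) maps under $\eta_\kk^B$ weakly into a single closed orthant, and that $\eta^B_\kk$ is linear on $C$. The sign condition passes to closures because weak sign constraints are closed conditions and $\eta^B_\kk$ is continuous. Linearity on $C$ follows from the same inductive reasoning, since each step's mutation map is linear on the relevant halfspace; in fact this is the $\kappa$ half of the induction above.
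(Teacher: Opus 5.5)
Your proof is correct and follows the paper's argument: you induct on the length of $\kk$ using Lemma~\ref{kappas and phis}, and the positive-part correction term is linear on a $B$-cone because its argument has constant weak sign there. The only difference is that the paper cites \cite[Proposition~5.3]{universal} for the linearity of $\eta^B_\kk$ on $B$-cones, while you prove it directly from the definition of $B$-classes, continuity, and the piecewise linearity of single-step mutation maps.
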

\begin{proof}
The linearity of $p\mapsto\kappa(p,\kk)$ follows from the fact that $\kappa(p,\kk)=\eta_\kk^{B}(p)$ and from \cite[Proposition~5.3]{universal}, which says that $\eta_\kk^{B}$ is linear on every $B$-cone.
The linearity of $p\mapsto\phi(p,\kk)$ then follows using Lemma~\ref{kappas and phis} and an easy induction on the length of $\kk$.
\end{proof}

\begin{lemma}\label{mut point prod}
Suppose $\tB$ has signed-nondegenerating coefficients and $\u_1,\ldots,\u_\ell$ are pointed elements with $\g$-vectors $m_1,\ldots,m_\ell$ all in the same $B$-cone.
For nonnegative $a_1,\ldots,a_\ell$, the monomial $\u_{m_1}^{a_1}\cdots\u_{m_\ell}^{a_\ell}$ is pointed with $\g$-vector $a_1m_1+\cdots+a_\ell m_\ell$.
\end{lemma}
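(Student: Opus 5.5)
The plan is to verify the definition of pointed directly, using the characterization of pointed elements in Theorem~\ref{mut point} together with the linearity results of Lemma~\ref{kap phi lin}.

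\emph{Step 1: the initial seed.} Write $m=a_1m_1+\cdots+a_\ell m_\ell$. Each $\u_i$ is $z^{m_i}$ times a formal power series in $\k[[\zeta]]$ with constant coefficient~$1$. Such series are closed under multiplication, so the product $\u_{m_1}^{a_1}\cdots\u_{m_\ell}^{a_\ell}$ is $z^m$ times a series in $\k[[\zeta]]$ with constant coefficient~$1$. In particular its $\g$-vector is $m$.

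\emph{Step 2: an arbitrary sequence $\kk$.} Apply Theorem~\ref{mut point}, condition~\ref{like theta}, to each pointed $\u_i$ (with $\g(\u_i)=m_i$). This writes $\u_i$ as $(\sigma^{(\kk)})^{\phi(m_i,\kk)}(z^{(\kk)})^{\kappa(m_i,\kk)}$ times a series in $\k[[\zeta^{(\kk)}]]$ with constant coefficient~$1$. Multiplying these expressions, the product is
\[(\sigma^{(\kk)})^{\sum_i a_i\phi(m_i,\kk)}\,(z^{(\kk)})^{\sum_i a_i\kappa(m_i,\kk)}\]
times a series in $\k[[\zeta^{(\kk)}]]$ with constant coefficient~$1$.

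\emph{Step 3: linearity on the $B$-cone.} All $m_i$ lie in one $B$-cone $C$. Since $C$ is a convex cone, $m\in C$ as well. By Lemma~\ref{kap phi lin}, $\kappa(\cdot,\kk)=\eta^B_\kk$ and $\phi(\cdot,\kk)$ are linear on $C$. Hence
\[\sum_i a_i\kappa(m_i,\kk)=\kappa(m,\kk)=\eta^B_\kk(m),\qquad \sum_i a_i\phi(m_i,\kk)=\phi(m,\kk).\]
So for every $\kk$ the product is $(z^{(\kk)})^{\eta_\kk^B(m)}$ times a Laurent monomial in $\sigma^{(\kk)}$ times a series with constant coefficient~$1$. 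This is exactly the definition of pointed with $\g$-vector $m$; equivalently, condition~\ref{like theta} of Theorem~\ref{mut point} holds.

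The product lies in $\can(\tB)$ because $\can(\tB)$ is an algebra. Strictly, only the $\kappa$-linearity is needed for the definition of pointed; the $\phi$-linearity gives condition~\ref{like theta} directly.

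\emph{Main obstacle.} The delicate point is that "linear on every $B$-cone" in Lemma~\ref{kap phi lin} must genuinely mean additivity on the cone, with $m$ remaining in that cone. For $\kappa$ this follows from \cite[Proposition~5.3]{universal}. For $\phi$, it follows from the recursion in Lemma~\ref{kappas and phis}: the term $[\sgn(\sigma_j^{(\kk)})\br{\kappa(p,\kk),d_je_j}]_+$ is linear on $C$ because $\kappa(\cdot,\kk)(C)$ lies weakly on one side of $e_j^\perp$, which is the sign-coherence built into the definition of $B$-classes.

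A second, minor point is that multiplying formal series in $\zeta^{(\kk)}$ is legitimate in $\can(\tB)=\can(\mu_\kk(\tB))$ (Corollary~\ref{mut can}), and that it preserves constant coefficient~$1$.
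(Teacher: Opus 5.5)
Your proposal is correct and follows essentially the same route as the paper: apply condition~\ref{like theta} of Theorem~\ref{mut point} to each factor, multiply, and use Lemma~\ref{kap phi lin} (linearity on the common $B$-cone, which contains $m$) to collapse the exponents to $\phi(m,\kk)$ and $\kappa(m,\kk)=\eta^B_\kk(m)$. Your side remark is also right: to verify the bare definition of pointed, only the linearity of $\eta^B_\kk$ on the $B$-cone is needed, and the $\phi$-linearity is what gives condition~\ref{like theta} directly.
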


\begin{proof}
For any sequence $\kk$ of indices, Theorem~\ref{mut point} says that $\u_{m_1}^{a_1}\cdots\u_{m_\ell}^{a_\ell}$ is equal to $(\sigma^{(\kk)})^{\sum a_i\phi(m_i,\kk)}(z^{(\kk)})^{\sum a_i\kappa(m_i,\kk)}$ (both sums from $1$ to $\ell$) times a formal power series in $\k[[\zeta^{(\kk)}]]$ with constant coefficient~$1$.
Setting $m=a_1m_1+\cdots+a_\ell m_\ell$, Lemma~\ref{kap phi lin} now says that $\u_{m_1}^{a_1}\cdots\u_{m_\ell}^{a_\ell}$ is equal to $(\sigma^{(\kk)})^{\phi(m,\kk)}(z^{(\kk)})^{\kappa(m,\kk)}$ times a formal power series in $\k[[\zeta^{(\kk)}]]$ with constant coefficient~$1$.
By Theorem~\ref{mut point}, $\u_{m_1}^{a_1}\cdots\u_{m_\ell}^{a_\ell}$ is pointed with $\g$-vector $m=a_1m_1+\cdots+a_\ell m_\ell$.
\end{proof}

Proposition~\ref{theta pointed}, Lemma~\ref{mut point prod}, and Theorem~\ref{mut point} combine to prove Theorem~\ref{B cone prod N}, which, in light of Lemma~\ref{N in Dom}, implies Theorem~\ref{B cone prod}.

\subsection{Pointed reduced bases}\label{prb sec}
A reduced basis~$\U$ for $\can(\tB)$ is \newword{pointed} if every element of~$\U$ is pointed.
More generally, $\U$ is \newword{pointed at $\kk$} if every element of~$\U$ is pointed at~$\kk$.
In this section, we discuss pointed reduced bases and change of basis between them.
Again, the motivating example is the theta basis.
The following proposition is the concatenation of Proposition~\ref{theta basis} and Proposition~\ref{theta pointed}.

\begin{proposition}\label{theta pointed basis}
Suppose $\tB$ has signed-nondegenerating coefficients.
The set $\set{\thet_m:m\in M^\circ_\uf}$ is a pointed reduced basis for $\can(\tB)$.
\end{proposition}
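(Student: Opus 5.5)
The statement concatenates two earlier results, so the proof will simply combine them. First, $\tB$ has signed-nondegenerating coefficients; taking the empty sequence in the definition shows that $\tB$ itself has nondegenerate coefficients. Proposition~\ref{theta basis} therefore applies, and $\set{\thet_m:m\in M^\circ_\uf}$ is a reduced basis for $\can(\tB)$, indexed so that $\g(\thet_m)=m$ because $\thet_m=z^mF_m$. Second, Proposition~\ref{theta pointed} says that each $\thet_m$ is a pointed element. A reduced basis is pointed exactly when every element is pointed, so the two facts together give the claim.

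The one point that needs care is the justification of Proposition~\ref{theta pointed} itself, which the paper attributes to iterations of Theorem~\ref{2 muts}. I would check this by induction on the length of $\kk$. The inductive claim is that $\thet_m^{\tB}=\thet^{\mu_\kk(\tB)}_{\eta_\kk^B(m)}\cdot(\sigma^{(\kk)})^{\phi}$ for some $\phi\in N_\uf$. The inductive step applies Theorem~\ref{2 muts} to $\mu_\kk(\tB)$, which is legitimate because signed-nondegenerating coefficients are preserved under mutation by definition. It also uses $\eta^B_{j\kk}=\eta^{\mu_\kk(B)}_j\circ\eta^B_\kk$ from \eqref{eta comp}. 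Finally, $\thet^{\mu_\kk(\tB)}_{\eta_\kk^B(m)}$ is, by its definition relative to $\Scat(\mu_\kk(\tB))$, equal to $(z^{(\kk)})^{\eta^B_\kk(m)}$ times a formal power series in $\k[[\zeta^{(\kk)}]]$ with constant term $1$. This is exactly the pointedness condition.

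I expect no real obstacle. The only subtlety is bookkeeping: the identifications of variables across seeds must be those of \eqref{exch rel}--\eqref{hy mut}, as in Corollary~\ref{mut can}, so that all the expressions live in the same algebra $\can(\tB)$.
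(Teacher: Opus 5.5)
Your proposal is correct and follows the paper's proof: the paper obtains this result exactly as the concatenation of Proposition~\ref{theta basis}, which applies because signed-nondegenerating coefficients give nondegenerate coefficients at the empty sequence, and Proposition~\ref{theta pointed}. Your inductive justification of the latter by iterating Theorem~\ref{2 muts} along $\kk$ is the same ``iterations of Theorem~\ref{2 muts}'' argument the paper invokes, and you correctly point to the two ingredients it needs: heredity of the coefficient hypothesis under mutation and the composition rule \eqref{eta comp}.
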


Arbitrary pointed reduced bases are characterized as follows.

\begin{theorem}\label{point precise}
Suppose $\tB$ has signed-nondegenerating coefficients and suppose $\U=\set{\u_m:m\in M^\circ_\uf}\subseteq\can(\tB)$.
The following are equivalent.
\begin{enumerate}[label=\rm(\roman*), ref=(\roman*)]
\item \label{is prb}
$\U$ is a pointed reduced basis for~$\can(\tB)$, indexed so that $\g(\u_m)=m$ for all $m\in M^\circ_\uf$.
\item \label{all like theta}
For all sequences $\kk$, each $\u_m$ is $(\sigma^{(\kk)})^{\phi(m,\kk)}(z^{(\kk)})^{\kappa(m,\kk)}$ times a formal power series in $\k[[\zeta^{(\kk)}]]$ with constant coefficient~$1$.
\item \label{all N}
For each $m\in M_\uf^\circ$, there exist constants $c^{(m)}_n\in\k$ with $c^{(m)}_0=1$ such that $\u_m=\sum_{n\in \N_m^\tB}c^{(m)}_n\,\sigma^n\thet_{m+nB}$.
\item \label{any prb}
If $\V=\set{\v_p:p\in M^\circ_\uf}$ is a pointed reduced basis for $\can(\tB)$, indexed so that $\g(\v_p)=p$ for all $p\in M^\circ_\uf$, then for each $m\in M_\uf^\circ$, there exist constants $c^{(m)}_n\in\k$ with $c^{(m)}_0=1$ such that $\u_m=\sum_{n\in \N_m^\tB}c^{(m)}_n\,\sigma^n\v_{m+nB}$.
\end{enumerate}
If these conditions hold, then $\U^{(\kk)}=\set{(\sigma^{(\kk)})^{-\phi(p,\kk)}\u_p:m=\eta_\kk^B(p)\in M^\circ_\uf}$ is a pointed reduced basis~at~$\kk$, for every sequence~$\kk$.
\end{theorem}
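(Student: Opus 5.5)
The plan is to reduce almost everything to Theorem~\ref{mut point} (pointed elements), Theorem~\ref{pointed set basis} (reduced bases), and Proposition~\ref{theta pointed basis} (the theta basis is a pointed reduced basis). The equivalences are proved in the cycle \ref{is prb}$\Leftrightarrow$\ref{all like theta}$\Leftrightarrow$\ref{all N}, followed by \ref{all N}$\Leftrightarrow$\ref{any prb}. The statement about $\U^{(\kk)}$ is handled last.

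\textbf{The first three conditions.} The equivalence of \ref{is prb} and \ref{all like theta} is elementwise. If \ref{is prb} holds, each $\u_m$ is pointed with $\g(\u_m)=m$, so Theorem~\ref{mut point} (\ref{is pointed}$\Rightarrow$\ref{like theta}) gives \ref{all like theta}. Conversely, \ref{all like theta} at $\kk=\emptyset$ shows each $\u_m$ is $z^m$ times a series in $\k[[\zeta]]$ with constant coefficient~$1$. By Theorem~\ref{pointed set basis} (\ref{each point}$\Rightarrow$\ref{u point}), $\U$ is then a reduced basis indexed by $\g$-vectors. Theorem~\ref{mut point} (\ref{like theta}$\Rightarrow$\ref{is pointed}) shows each $\u_m$ is pointed, which gives \ref{is prb}. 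The equivalence of \ref{all like theta} and \ref{all N} is Theorem~\ref{mut point} (\ref{like theta}$\Leftrightarrow$\ref{N}) applied to each $\u_m$.

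\textbf{Adding condition \ref{any prb}.} Taking $\V$ to be the theta basis, which is allowed by Proposition~\ref{theta pointed basis}, shows that \ref{any prb} implies \ref{all N}. For the converse, assume \ref{all N}, so that $\U$ is a pointed reduced basis by the previous step, and let $\V$ be any pointed reduced basis. I would avoid composing change-of-basis matrices through the theta basis; that route needs $\N^\tB$ to be closed under the relevant sums, which is unclear. Instead I would run the argument of Theorem~\ref{mut point}, \ref{like theta}$\Rightarrow$\ref{N}, with $\V$ in place of the theta basis.

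That argument used only two properties of the theta basis. First, $\thet_p$ satisfies \ref{like theta} with exponents $\phi(p,\kk),\kappa(p,\kk)$; $\V$ has the same property by \ref{is prb}$\Rightarrow$\ref{all like theta} applied to $\V$. Second, the rescaled elements $(\sigma^{(\kk)})^{-\phi(p,\kk)}\thet_p$ form a reduced basis at $\kk$; for $\V$ this follows from Theorem~\ref{pointed set basis}, applied at the seed $\kk$ via Corollary~\ref{mut can}. Lemma~\ref{point el}.\ref{c B n} at each $\kk$ then forces every nonzero coefficient $c_{p,n}$ in the $\V$-expansion of $\u_m$ to have $p=m+nB$ with $n\in\N_{m,\kk}^\tB$. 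Intersecting over all $\kk$ gives $n\in\N_m^\tB$, and Lemma~\ref{point el}.\ref{c is 1} gives $c_0=1$.

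\textbf{The final assertion.} By \ref{all like theta}, $(\sigma^{(\kk)})^{-\phi(p,\kk)}\u_p$ is $(z^{(\kk)})^{\eta^B_\kk(p)}$ times a series in $\k[[\zeta^{(\kk)}]]$ with constant coefficient~$1$. The set of these elements, as $p$ ranges over $M^\circ_\uf$, differs from $\U$ only by $\sigma$-monomial rescalings. Its $\sigma$-span is therefore the same basis of $\can(\tB)=\can(\mu_\kk(\tB))$, so it is a reduced basis at $\kk$. Pointedness at $\kk$ follows from pointedness of $\u_p$: the expansions at seeds $\ellell\kk$ are the same expansions, with $\eta^{\mu_\kk(B)}_\ellell\eta^B_\kk=\eta^B_{\ellell\kk}$.

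\textbf{Main obstacle.} The step I expect to be most delicate is justifying that Theorem~\ref{pointed set basis} and Lemma~\ref{point el} transfer verbatim to an arbitrary seed $\kk$. This requires checking that the notion of convergence in $\can(\tB)$ matches convergence in the $\zeta^{(\kk)}$-expansion at that seed. I would handle this by observing that $\mu_\kk(\tB)$ itself has signed-nondegenerating coefficients, and applying the earlier results with $\mu_\kk(\tB)$ as the initial matrix.
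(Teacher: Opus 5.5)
Your proposal is correct and is essentially the paper's proof. The paper also passes through the condition that $\U$ merely consists of pointed elements with $\g(\u_p)=p$, upgrades this to (i) via Theorem~\ref{pointed set basis}, and obtains (iv) from Theorem~\ref{mut point aug}, whose proof is exactly your rerun of the Theorem~\ref{mut point} argument with $\V$ in place of the theta basis; your explicit check of the final assertion about $\U^{(\kk)}$ spells out what the paper leaves implicit. As an aside, the composition route you set aside does work: linearity of $\psi^\tB_\kk$ gives $\nu^{(m)}_\kk(a+b)=\nu^{(m)}_\kk(a)+\nu^{(m+aB)}_\kk(b)$, and together with telescoping of the differences $\eta^B_\kk(\cdot)-\eta^B_\kk(m)$ this shows that $a\in\N^\tB_m$ and $b\in\N^\tB_{m+aB}$ imply $a+b\in\N^\tB_m$, which is the closure needed to compose the triangular changes of basis through the theta basis.
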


\begin{remark}\label{FQ remark}
Theorem~\ref{point precise} is a version of the motivating result of~\cite{FanQin}, with significant differences.
We adopt the stronger hypothesis of signed-nondegenerating coefficients as opposed to just that $\tB$ has full rank.
We drop the hypothesis of existence of an injective-reachable seed  and therefore the conclusion is about the small canonical algebra.
In accordance with our philosophy about coefficients, we characterize reduced bases rather than bases.
(The notion of reduced bases is also contemplated in \cite[Remark~5.1.4]{FanQin}.)
Whereas Theorem~\ref{point precise} is a characterization of pointed \emph{reduced} bases, Qin's result \cite[Theorem~1.2.1]{FanQin} is a characterization of pointed \emph{bases} (in an analogous sense) and can be phrased in terms of a higher-dimensional version of dominance regions that also involves frozen variables.
Qin's result also includes the statement that every pointed basis contains all cluster monomials.
We emphasize that \cite{FanQin} appeared during the early stages of this project and was influential in the development of this paper.
\end{remark}

\begin{remark}\label{why N}
Remark~\ref{FQ remark} suggests a question:
Why can Qin's result \cite[Theorem~1.2.1]{FanQin} be phrased in terms of a version of dominance regions while Theorem~\ref{point precise} is apparently more complicated, using the sets $\N_m^\tB$?
The difference is that Theorem~\ref{point precise} amounts to a characterization of \emph{reducible} bases (see Section~\ref{bases sec}) that are pointed in the sense of~\cite{FanQin}, while Qin's result characterizes arbitrary pointed bases, reducible or not.
\end{remark}

The implication \ref{is prb}$\implies$\ref{all N} in Theorem~\ref{point precise} shows how any pointed reduced basis for $\can(\tB)$ must be related to the theta basis.
The implication \ref{is prb}$\implies$\ref{any prb} gives the form of the change of basis between any two pointed reduced bases.
Before proving Theorem~\ref{point precise}, we state weaker but simpler versions of these statements in terms of the dominance region.

\begin{corollary}\label{all N dom}
Suppose $\tB$ has signed-nondegenerating coefficients and suppose $\U=\set{\u_m:m\in M^\circ_\uf}\subseteq\can(\tB)$ is a pointed reduced basis for~$\can(\tB)$, indexed so that $\g(\u_m)=m$ for all $m\in M^\circ_\uf$.
Then for each $m$, there exist constants $c^{(m)}_{p,n}\in\k$ such that $\u_m=\thet_m+\sum_p\sum_nc^{(m)}_{p,n}\sigma^n\thet_p$, summing over $p\in\Dom^B_m$ and $n\in N^+_\uf$ such that $p=m+nB$.
\end{corollary}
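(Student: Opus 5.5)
The plan is to derive this corollary from the implication \ref{is prb}$\implies$\ref{all N} of Theorem~\ref{point precise}, combined with Lemma~\ref{N in Dom}. We follow exactly the route by which Theorem~\ref{B cone prod} was deduced from Theorem~\ref{B cone prod N}.

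Since $\U$ is a pointed reduced basis indexed by $\g$-vectors, condition~\ref{is prb} of Theorem~\ref{point precise} holds. Therefore condition~\ref{all N} holds as well: for each $m$ there are constants $c^{(m)}_n\in\k$ with $c^{(m)}_0=1$ such that
\[\u_m=\sum_{n\in\N_m^\tB}c^{(m)}_n\,\sigma^n\thet_{m+nB}.\]
Split off the $n=0$ term, which is $\thet_m$ because $c^{(m)}_0=1$. Every remaining index $n$ lies in $\N_m^\tB\setminus\set{0}\subseteq N^{0+}_\uf\setminus\set{0}=N^+_\uf$.

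By Lemma~\ref{N in Dom}, each such $n$ satisfies $m+nB\in\Dom^B_m$. Set $p=m+nB$ and define $c^{(m)}_{p,n}=c^{(m)}_n$ when $n\in\N_m^\tB\setminus\set{0}$ and $p=m+nB$, and $c^{(m)}_{p,n}=0$ for all other pairs $(p,n)$ with $p\in\Dom^B_m$, $n\in N^+_\uf$ and $p=m+nB$. This rewrites the expansion as
\[\u_m=\thet_m+\sum_p\sum_n c^{(m)}_{p,n}\sigma^n\thet_p,\]
summing over $p\in\Dom^B_m$ and $n\in N^+_\uf$ with $p=m+nB$. Extending by zero is harmless, because for a fixed $n$ the condition $p=m+nB$ determines $p$. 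Each nonzero term therefore appears exactly once, and the sum converges because it is the same convergent sum as before.

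Given Theorem~\ref{point precise}, there is no real obstacle here. The only point to check is that passing from $\N_m^\tB$ to the larger index set does not disturb convergence or uniqueness, and it does not, since the added coefficients are all zero.
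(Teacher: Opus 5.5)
Your proposal is correct and is essentially the paper's argument. The paper obtains this corollary from the implication \ref{is prb}$\implies$\ref{all N} of Theorem~\ref{point precise} together with Lemma~\ref{N in Dom}, exactly parallel to how Theorem~\ref{B cone prod} is deduced from Theorem~\ref{B cone prod N}.
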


\begin{corollary}\label{any prb dom}
Suppose $\tB$ has signed-nondegenerating coefficients and suppose $\U=\set{\u_m:m\in M^\circ_\uf}\subseteq\can(\tB)$ and $\V=\set{\v_m:m\in M^\circ_\uf}$ are pointed reduced bases for~$\can(\tB)$, indexed so that $\g(\u_m)=\g(\v_m)=m$ for all $m\in M^\circ_\uf$.
Then for each~$m$, there exist constants $c^{(m)}_{p,n}\in\k$ such that $\u_m=\v_m+\sum_p\sum_nc^{(m)}_{p,n}\sigma^n\v_p$, summing over $p\in\Dom^B_m$ and $n\in N^+_\uf$ such that $p=m+nB$.
\end{corollary}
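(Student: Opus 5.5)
The statement to prove is Corollary~\ref{any prb dom}. I would deduce it from Theorem~\ref{point precise} combined with Lemma~\ref{N in Dom}, in the same way Theorem~\ref{B cone prod} was deduced from Theorem~\ref{B cone prod N}. Since $\U$ is a pointed reduced basis indexed so that $\g(\u_m)=m$, condition~\ref{is prb} of Theorem~\ref{point precise} holds for $\U$. Applying the implication \ref{is prb}$\implies$\ref{any prb} with the pointed reduced basis $\V$ gives, for each $m$, constants $c^{(m)}_n$ with $c^{(m)}_0=1$ such that
\[\u_m=\sum_{n\in\N_m^\tB}c^{(m)}_n\,\sigma^n\v_{m+nB}.\]

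Next I would split off the $n=0$ term. Because $c^{(m)}_0=1$, it contributes exactly $\v_m$. Every remaining $n$ lies in $\N_m^\tB\subseteq N^{0+}_\uf$ and is nonzero, so $n\in N^+_\uf$. For each such $n$, Lemma~\ref{N in Dom} gives $p=m+nB\in\Dom^B_m$. Setting $c^{(m)}_{p,n}=c^{(m)}_n$ when $p=m+nB$ with $n\in\N_m^\tB\setminus\set{0}$, and $c^{(m)}_{p,n}=0$ for all other pairs $(p,n)$ in the stated index set, produces exactly the claimed expression $\u_m=\v_m+\sum_p\sum_nc^{(m)}_{p,n}\sigma^n\v_p$.

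One point needs checking: for a fixed $p$, several $n$ could satisfy $p=m+nB$ when $B$ is singular. This causes no trouble, because the sum in the corollary is indexed by pairs $(p,n)$, so each $n$ contributes its own term. Convergence of the rewritten sum is also automatic, since it is merely a regrouping of the convergent sum supplied by Theorem~\ref{point precise}.

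All the substantive work sits in Theorem~\ref{point precise}, which I am allowed to assume. The only step needing care is making sure the roles of $\U$ and $\V$ in condition~\ref{any prb} line up with the corollary: $\V$ has to be a pointed reduced basis indexed by $\g$-vector, and it is by hypothesis. Beyond that, the argument is just bookkeeping of indices.
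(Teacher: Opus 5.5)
Your proposal is correct and takes essentially the paper's route: the corollary is obtained from the implication (i)$\implies$(iv) of Theorem~\ref{point precise}, by splitting off the $n=0$ term and applying Lemma~\ref{N in Dom} to each remaining $n\in\N_m^\tB\setminus\set{0}\subseteq N^+_\uf$. This is the same way Theorem~\ref{B cone prod} is deduced from Theorem~\ref{B cone prod N}.
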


In order to prove Theorem~\ref{point precise}, we make the following augmentation of Theorem~\ref{mut point}.

\begin{theorem}\label{mut point aug}
Suppose $\tB$ has signed-nondegenerating coefficients and suppose $\u\in\can(\tB)$.
The following are equivalent.
\begin{enumerate}[label=\rm(\roman*), ref=(\roman*)]
\item \label{is pointed aug}
$\u$ is pointed and $\g(\u)=m$.
\item \label{like theta aug}
For all sequences $\kk$, $\u$ is $(\sigma^{(\kk)})^{\phi(m,\kk)}(z^{(\kk)})^{\kappa(m,\kk)}$ times a formal power series in $\k[[\zeta^{(\kk)}]]$ with constant coefficient~$1$.
\item \label{N aug}
There exist constants $c_n\in\k$ with $c_0=1$ such that $\u=\sum_{n\in \N_m^\tB}c_n\,\sigma^n\thet_{m+nB}$.
\item \label{any}
If $\V=\set{\v_p:p\in M^\circ_\uf}$ is a pointed reduced basis for $\can(\tB)$, indexed so that $\g(\v_p)=p$ for all $p\in M^\circ_\uf$, then there exist constants $c_n\in\k$ with $c_0=1$ such that $\u=\sum_{n\in \N_m^\tB}c_n\,\sigma^n\v_{m+nB}$.

\end{enumerate}
\end{theorem}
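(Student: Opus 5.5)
Since Theorem~\ref{mut point} already gives the equivalence of \ref{is pointed aug}, \ref{like theta aug}, and \ref{N aug}, the only new work is to bring in condition~\ref{any}. We will show \ref{N aug}$\implies$\ref{any} (really \ref{is pointed aug} and \ref{like theta aug}$\implies$\ref{any}) and \ref{any}$\implies$\ref{N aug}. The second is immediate: the theta basis is a pointed reduced basis by Proposition~\ref{theta pointed basis}, indexed with $\g(\thet_p)=p$, so specializing \ref{any} to $\V=\set{\thet_p}$ gives \ref{N aug} verbatim.

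For \ref{is pointed aug}$\implies$\ref{any}, we repeat the argument of Theorem~\ref{mut point} with $\V$ in place of the theta basis. Fix a pointed reduced basis $\V$ and expand $\u=\sum_{p}\sum_{n}c_{p,n}\sigma^n\v_p$, which is possible since $\V$ is a reduced basis. Lemma~\ref{point el}.\ref{c B n} gives $p=m+nB$ and $n\in N^{0+}_\uf$, and Lemma~\ref{point el}.\ref{c is 1} gives $c_{m,0}=1$. It remains to show that every $n$ with $c_{m+nB,n}\neq0$ lies in $\N^\tB_{m,\kk}$ for each $\kk$.

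Fix $\kk$. Each $\v_p$ is pointed with $\g$-vector $p$, so by Theorem~\ref{mut point} (\ref{is pointed}$\implies$\ref{like theta}) it equals $(\sigma^{(\kk)})^{\phi(p,\kk)}(z^{(\kk)})^{\kappa(p,\kk)}$ times a power series in $\k[[\zeta^{(\kk)}]]$ with constant term~$1$. This is the key point: the exponents $\phi,\kappa$ are the same as for the theta basis. Set $\v^{(\kk)}_{\kappa(p,\kk)}=(\sigma^{(\kk)})^{-\phi(p,\kk)}\v_p$ and $\u^{(\kk)}=(\sigma^{(\kk)})^{-\phi(m,\kk)}\u$. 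Then we need that $\set{\v^{(\kk)}_{q}}$ is a reduced basis at~$\kk$. Multiplying by Laurent monomials in $\sigma$ does not change the span $\set{\sigma^n\v}$, and $\kappa(\cdot,\kk)=\eta^B_\kk$ is a bijection, so this follows from the at-$\kk$ version of Theorem~\ref{pointed set basis}, using $\can(\mu_\kk(\tB))=\can(\tB)$ (Corollary~\ref{mut can}), exactly as in the proof of Theorem~\ref{mut point}. Rewriting $\sigma^n=(\sigma^{(\kk)})^{\psi^\tB_\kk(n)}$ gives
\[\u^{(\kk)}=\sum_n c_{m+nB,n}(\sigma^{(\kk)})^{\nu^{(m)}_\kk(n)}\v^{(\kk)}_{\kappa(m+nB,\kk)}.\]
Applying Lemma~\ref{point el}.\ref{c B n} at the seed $\kk$ to $\u^{(\kk)}$, which is pointed at $\kk$, then yields $\nu^{(m)}_\kk(n)\in N^{0+}_\uf$ and $\eta^B_\kk(m+nB)-\eta^B_\kk(m)=\nu^{(m)}_\kk(n)\cdot\mu_\kk(B)$. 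Hence $n\in\N^\tB_{m,\kk}$ for all $\kk$, so $n\in\N^\tB_m$, which proves~\ref{any}.

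The main obstacle is justifying that Lemma~\ref{point el} and Theorem~\ref{pointed set basis} hold at an arbitrary seed~$\kk$. The plan is to observe that their proofs use only the nondegenerate coefficients of the matrix in question, and that $\mu_\kk(\tB)$ has nondegenerate coefficients under the signed-nondegenerating hypothesis. This is the same transport the paper already uses in Theorem~\ref{mut point}, so no new ingredient should be needed.
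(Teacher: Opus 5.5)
Your proposal is correct. For the main direction, (i)/(ii) $\Rightarrow$ (iv), it is the same argument as the paper: rerun the proof of Theorem~\ref{mut point} with $\V$ in place of the theta basis, the key point being that each $\v_p$ carries the same exponents $\phi(p,\kk)$ and $\kappa(p,\kk)$ as $\thet_p$.

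The only difference is the converse. You specialize (iv) to the theta basis via Proposition~\ref{theta pointed basis} to obtain (iii). This is shorter and valid, because (iv) quantifies over all pointed reduced bases. The paper instead derives (ii) directly from an expansion in a given $\V$, mimicking (iii) $\Rightarrow$ (ii) of Theorem~\ref{mut point}. That route shows slightly more: an expansion of the required form in any single pointed reduced basis already forces pointedness.
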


\begin{proof}
The equivalence of \ref{is pointed aug}, \ref{like theta aug}, and \ref{N aug} is Theorem~\ref{mut point}.
We can show that \ref{is pointed aug} and \ref{like theta aug} are equivalent to \ref{any} by essentially the same proof for \ref{is pointed aug}, \ref{like theta aug}, and \ref{N aug} in Theorem~\ref{mut point}, as we now explain.

Specifically, suppose \ref{is pointed aug} and \ref{like theta aug} hold.
Since $\u\in\can(\tB)$ and $\V=\set{\v_p:p\in M^\circ_\uf}$ is a pointed reduced basis for $\can(\tB)$, we write $\u=\sum_{p\in M^\circ_\uf}\sum_{n\in N_\uf}c_{p,n}\,\sigma^n \v_p$.
If $c_{p,n}\neq0$, then, once again Lemma~\ref{point el}.\ref{c B n} says that $p=m+nB$.

As before, the element $\u^{(\kk)}=\u\cdot(\sigma^{(\kk)})^{-\phi(m,\kk)}$ is pointed at $\kk$.
Since $\V$ is a pointed reduced basis for $\can(\tB)$, indexed so that $\g(\v_p)=p$ for all $p\in M^\circ_\uf$, the equivalence of \ref{is pointed aug} and~\ref{like theta aug} says that each~$\v_p$ is $(\sigma^{(\kk)})^{\phi(p,\kk)}(z^{(\kk)})^{\kappa(p,\kk)}$ times a formal power series in $\k[[\zeta^{(\kk)}]]$ with constant coefficient~$1$.
We write $\v^{(\kk)}_{\kappa(p,\kk)}$ for $(\sigma^{(\kk)})^{-\phi(p,\kk)}\v_p$.
We establish that $\u=\sum_{n\in\N_m^\tB}c_n\sigma^n\v_{m+nB}$ by the same argument as in the proof of Theorem~\ref{mut point}.
Thus \ref{is pointed aug} and \ref{like theta aug} imply \ref{any}.

Conversely, suppose $\u=\sum_{n\in\N_m^\tB}c_n\sigma^n\v_{m+nB}$ with $c_0=1$ and let $\kk$ be any sequence.
We already saw that each~$\v_{m+nB}$ is $(\sigma^{(\kk)})^{\phi(m+nB,\kk)}(z^{(\kk)})^{\kappa(m+nB,\kk)}$ times a formal power series in $\k[[\zeta^{(\kk)}]]$ with constant coefficient~$1$.  
Arguing just as in the proof of Theorem~\ref{mut point}, we establish \ref{like theta}.
\end{proof}

\begin{proof} [Proof of Theorem~\ref{point precise}]
The following condition is formally weaker than~\ref{is prb}.
\begin{enumerate}[label=\rm(\roman*), ref=(\roman*)]
\item[(i$'$)] 
$\U$ is a set of pointed elements of~$\can(\tB)$, indexed so that $\g(\u_p)=p$ for all $p\in M^\circ_\uf$.
\end{enumerate}
Theorem~\ref{mut point aug} says that (i$'$), \ref{all like theta}, \ref{all N}, and \ref{any} are equivalent.
But (i$'$) says in particular that each $\u_m$ is $z^m$ times a formal power series in $\k[[\zeta]]$ with constant coefficient~$1$ and thus implies, by Theorem~\ref{pointed set basis}, that $\U$ is a reduced basis for $\can(\tB)$.
We see that (i$'$) and \ref{is prb} are equivalent.
\end{proof}

We can now obtain a version of Theorem~\ref{B cone prod N} that replaces the theta basis with an arbitrary pointed reduced basis, under the assumption of signed-nondegenerating coefficients.
Lemma~\ref{mut point prod} and Theorem~\ref{point precise} combine to prove the following theorem.

\begin{theorem}\label{B cone prod U}
Suppose that $\tB$ has signed-nondegenerating coefficients and that $m_1,\ldots,m_\ell$ are all contained in the same $B$-cone.
Let $\V=\set{\v_p:p\in M^\circ_\uf}$ be a pointed reduced basis for $\can(\tB)$.
Write $m=a_1m_1+\cdots+a_\ell m_\ell$ for nonnegative integers $a_1,\ldots,a_\ell$.
Then there exist coefficients $c_n\in\k$ with $c_0=1$ such that $\v_{m_1}^{a_1}\cdots\v_{m_\ell}^{a_\ell}=\sum_{n\in\N_m^\tB}c_n\sigma^n\v_{m+nB}$.  
\end{theorem}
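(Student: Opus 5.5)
The plan is to argue exactly as for Theorem~\ref{B cone prod N}, with the theta basis replaced by the arbitrary pointed reduced basis $\V$. The argument has three steps.

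\emph{Step 1.} Since $\V$ is a pointed reduced basis indexed by $\g$-vectors, each $\v_{m_i}$ is a pointed element of $\can(\tB)$ with $\g(\v_{m_i})=m_i$. The $m_i$ lie in a common $B$-cone, so Lemma~\ref{mut point prod} applies directly to $\u_i=\v_{m_i}$. It shows that the monomial $\u=\v_{m_1}^{a_1}\cdots\v_{m_\ell}^{a_\ell}$ is pointed with $\g$-vector $m=a_1m_1+\cdots+a_\ell m_\ell$. Membership $\u\in\can(\tB)$ holds because $\can(\tB)$ is an algebra containing each $\v_{m_i}$.

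\emph{Step 2.} Now invoke Theorem~\ref{mut point aug}, using the implication from Condition~\ref{is pointed aug} to Condition~\ref{any}. Take the given $\V$ as the pointed reduced basis in Condition~\ref{any}. This yields constants $c_n\in\k$ with $c_0=1$ such that $\u=\sum_{n\in\N_m^\tB}c_n\sigma^n\v_{m+nB}$, which is the desired conclusion. Equivalently, one can apply the implication \ref{is prb}$\implies$\ref{any prb} of Theorem~\ref{point precise} in spirit, but only Theorem~\ref{mut point aug} for the single element $\u$ is needed.

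\emph{Step 3: checking hypotheses.} The only point requiring care is that Lemma~\ref{mut point prod} is stated for pointed elements, and its proof uses Theorem~\ref{mut point} to get the form $(\sigma^{(\kk)})^{\phi(m_i,\kk)}(z^{(\kk)})^{\kappa(m_i,\kk)}$ times a series with constant term~$1$. This step is valid for any pointed element, not just theta functions. It then uses the linearity of $\phi(\cdot,\kk)$ and $\kappa(\cdot,\kk)$ on $B$-cones from Lemma~\ref{kap phi lin}, and both ingredients are available here. I expect no real obstacle. The main thing to verify is this bookkeeping: that ``pointed reduced basis'' gives exactly that each $\v_p$ is pointed with $\g(\v_p)=p$. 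That is the definition, given the indexing convention of Theorem~\ref{point precise}.
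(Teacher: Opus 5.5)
Your proposal is correct and follows the paper's own argument: the paper also obtains this theorem by combining Lemma~\ref{mut point prod}, which shows the monomial is pointed with $\g$-vector $m$, with the change-of-basis characterization of pointed elements. Citing the implication (i)$\implies$(iv) of Theorem~\ref{mut point aug} directly, as you do, is if anything a more precise reference than the paper's citation of Theorem~\ref{point precise}, since the monomial is a single element rather than a basis.
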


\subsection{The ray basis}
We highlight another example of a pointed basis called the ray basis that exists in many cases, depending on properties of the mutation fan.  
The rational vectors in $V^*$ are the vectors $x\in V^*$ such that there exists an integer $a$ such that $ax\in M_\uf^\circ$.
A closed cone is \newword{rational} if it is the nonnegative $\reals$-linear span of a collection of rational vectors and also \newword{simplicial} if those vectors are linearly independent.
Given a rational simplicial cone $C$, there is a unique linearly independent set of primitive vectors in $C\cap M_\uf^\circ$ whose nonnegative $\reals$-linear span is $C$.
(These are the primitive vectors of $M_\uf^\circ$ in the extreme rays of $C$.)
The cone $C$ is \newword{integral} if the nonnegative $\integers$-linear span of this set of primitive vectors is all of  $C\cap M_\uf^\circ$.

We begin by defining the ray basis in the simplest case where it exists.
Suppose, for some~$B$, that the mutation fan $\F_B$ is a rational, simplicial, integral fan, meaning that every cone in $\F_B$ is rational, simplicial, and integral.
For every $m\in M^\circ_\uf$, there is a smallest cone $C_m$ of $\F_B$ containing $m$, because the mutation fan $\F_B$ is complete.
Since $C_m$ is rational and simplicial cone, writing $m_1,\ldots,m_\ell$ for the primitive vectors of $M_\uf^\circ$ in the extreme rays of $C_m$, there is a unique expression $m=\sum_{i=1}^\ell c_im_i$.
The $c_i$ are all positive.
Since $\F_B$ is integral, the $c_i$ are all integers.
Define $\rho_m$ to be $\prod_{i=1}^\ell\thet_{m_i}^{c_i}$.
We will see below that the set of all the $\rho_m$ is a pointed reduced basis for $\can(\tB)$.
But first, we will define the $\rho_m$ under weaker conditions on the mutation fan.

Given a complete fan $\F$, the \newword{rational part} of $\F$, if it exists, is a rational fan $\F^\rationals$ with the following two properties:
First, each cone of $\F^\rationals$ is contained in a cone of $\F$.
Second, for each $C$ cone of $\F$, there is a unique largest cone $C'$ of $\F^\rationals$ contained in $C$, and $C'$ contains all of the rational vectors in $C$.
(See \cite[Definition~4.9]{unisurface}.)
The rational part of $\F$ is unique if it exists.
When $\F$ is rational, it is its own rational part.

\begin{remark}\label{rat fan rem}
When $\F^\rationals$ exists, it contains all rational vectors, because $\F$ is complete and because for each cone $C$ of $\F$, there is a cone $C'$ of $\F^\rationals$ that contains all rational vectors of $C$.
However, $\F^\rationals$ is not complete (i.e.\ does not contain all real vectors) when $\F$ is not rational:
If $C\in\F$ is irrational, the largest cone $C'$ of~$\F^\rationals$ contained in $C$ is strictly smaller than~$C$, because $C'$ is rational and $C$ is not.
\end{remark}

Now suppose that the rational part $\F^\rationals_B$ of the mutation fan $\F_B$ exists and is simplicial and integral.  
For every $m\in M^\circ_\uf$, let $C_m$ be the smallest cone of $\F_B$ containing $m$ and let $C'_m$ be the largest cone of $\F^\rationals_B$ contained in $C_m$.
Since $C'_m$ is rational, simplicial, and integral, if $m_1,\ldots,m_\ell$ are the primitive vectors of $M_\uf^\circ$ in the extreme rays of $C'_m$, there is a unique expression $m=\sum_{i=1}^\ell c_im_i$ and the $c_i$ are all positive integers.
Again in this case, define $\rho_m$ to be $\prod_{i=1}^\ell\thet_{m_i}^{c_i}$.

\begin{theorem}\label{ray basis}
Suppose the rational part of the mutation fan $\F_B$ exists and is simplicial and integral.  
Suppose also that $\tB$ is an extension of $B$ with signed-nondegenerating coefficients.
Then $\set{\rho_m:m\in M^\circ_\uf}$ is a pointed reduced basis for $\can(\tB)$.
\end{theorem}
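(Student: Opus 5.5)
The plan is to reduce everything to Theorem~\ref{point precise}, in its weakened form (i$'$) from the proof of that theorem: it suffices to show that each $\rho_m$ is a pointed element of $\can(\tB)$ with $\g(\rho_m)=m$. Once this holds, Theorem~\ref{pointed set basis} gives that $\set{\rho_m:m\in M^\circ_\uf}$ is a reduced basis, and Theorem~\ref{point precise} then says it is a pointed reduced basis. Membership in $\can(\tB)$ is immediate, since $\rho_m$ is a finite product of theta functions.

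Fix $m$, let $C_m$ be the smallest cone of $\F_B$ containing $m$, and let $C'_m$ be the largest cone of $\F^\rationals_B$ contained in $C_m$. The primitive vectors $m_1,\ldots,m_\ell$ on the extreme rays of $C'_m$ lie in $C'_m\subseteq C_m$. Since $C_m$ is a face of a $B$-cone, all the $m_i$ lie in a common $B$-cone. By Proposition~\ref{theta pointed}, each $\thet_{m_i}$ is pointed with $\g(\thet_{m_i})=m_i$. Lemma~\ref{mut point prod} therefore says that $\rho_m=\prod_i\thet_{m_i}^{c_i}$ is pointed with $\g$-vector $\sum_i c_im_i=m$.

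The remaining point, and the main thing to check carefully, is that the expression $m=\sum c_im_i$ exists with positive integer $c_i$. This holds because $m$ is a rational vector in $C_m$, so $m\in C'_m$ by the definition of the rational part. Integrality and simpliciality of $C'_m$ then give the unique nonnegative integer combination, and discarding zero coefficients (or noting that $m$ lies in the relative interior of the relevant face) gives positive ones. The same reasoning covers $m=0$, where $\rho_0=1=\thet_0$.

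No further obstacle seems to arise. Lemma~\ref{mut point prod} only requires the $\g$-vectors to lie in a common $B$-cone, and that is guaranteed since every cone of $\F_B$ lies in some $B$-cone.
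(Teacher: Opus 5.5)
Your proposal is correct and follows essentially the same route as the paper: each $\rho_m$ is a monomial in theta functions, so Theorem~\ref{pointed set basis} makes the set a reduced basis, and since $C'_m$ lies in a cone of $\F_B$ (hence in a $B$-cone), Proposition~\ref{theta pointed} and Lemma~\ref{mut point prod} make each $\rho_m$ pointed with $\g$-vector $m$. The appeal to Theorem~\ref{point precise} is harmless but unnecessary, because a reduced basis whose elements are all pointed is a pointed reduced basis by definition.
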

\begin{proof}
The hypothesis on the rational part of $\F_B$ makes it possible to define $\rho_m$ for every $m\in M^\circ_\uf$.
Since each $\rho_m$ is a monomial in theta functions, it is $z^m$ times a formal power series in $\k[[\zeta]]$ with constant coefficient~$1$. 
Thus Theorem~\ref{pointed set basis} says that $\set{\rho_m:m\in M^\circ_\uf}$ is a reduced basis for $\can(\tB)$.
Since the cone $C'_m$ in the definition of $\rho_m$ is contained in some cone $C_m$ of $\F_B$ (a $B$-cone or a face of a $B$-cone), Lemma~\ref{mut point prod} says that $\set{\rho_m:m\in M^\circ_\uf}$ is a pointed reduced basis.
\end{proof}

We call $\set{\rho_m:m\in M^\circ_\uf}$ the \newword{ray basis} when it exists.
A sufficient condition for the ray basis to exist is that $B$ admits positive universal geometric coefficients over~$\integers$, in the sense of~\cite{universal}.
(That term doesn't appear in \cite{universal}, but \newword{universal geometric coefficients} over $\integers$ exist for $B$ if and only there is a $\integers$-basis for~$B$.
There is a notion of a positive $\integers$-basis for $B$, and the corresponding universal geometric coefficients may therefore be called positive.
See \cite[Section~6]{universal}, particularly \cite[Proposition~6.7]{universal} and \cite[Proposition~6.11]{universal}.)
We conclude with some examples where the ray basis exists and does not exist.
Based on these examples, it seems reasonable to guess that the ray basis exists precisely in the case of finite mutation type (or for rank $2$, when $B$ is of finite or affine type).

\begin{example}\label{rank 2}
Suppose $B=\begin{bsmallmatrix*}[r]0&a\\b&0\end{bsmallmatrix*}$ with $\sgn(b)=-\sgn(a)$.
If $ab\ge-4$, then the ray basis exists by \cite[Proposition~9.8]{universal}.
If $ab<-4$, then the ray basis fails to exist because~$\F_B$ has a full-dimensional cone that is not rational.
\end{example}

\begin{example}\label{finite type}
When $B$ is of finite type, $\F_B$ is a finite, rational, simplicial, integral fan because it coincides with the fan whose cones are the nonnegative $\reals$-span of the $\g$-vectors of compatible sets of cluster variables for $B^T$.
(See \cite[Proposition~9.4]{universal}.)
Thus the ray basis exists in this case.
It coincides with the theta basis, because the theta basis essentially coincides with the cluster monomials.
(See Remark~\ref{not afoul}.)
\end{example}

\begin{example}\label{affine type}
When $B$ is of affine type, $\F_B$ is a rational, simplicial, integral fan, as we now explain.
Starting with acyclic affine type, $\F_{B^T}$ coincides with a fan $\nu_c(\Fan_c(\RS))$ \cite[Theorem~2.9]{affscat}.
Here $\Fan_c(\RS)$ is a complete fan in $V$ that is rational and simplicial by construction and integral (relative to the lattice $N_\uf$ in $V$) by \cite[Proposition~5.14(2,6)]{affdenom}.
The map $\nu_c$ is a piecewise-linear, respects the fan structure of $\Fan_c(\RS)$, and takes $N_\uf$ to $M_\uf^\circ$.
Thus $\F_{B^T}$ is rational, simplicial, and integral when $B$ is acyclic of affine type.
We can remove the transpose because $B^T$ is acyclic of affine type if and only if $B$ is. 
We can remove the requirement of acyclicity because every exchange matrix of affine type is mutation-equivalent to an acyclic matrix and because each mutation map is an isomorphism of mutation fans and preserves $M_\uf^\circ$.
\end{example}

\begin{example}\label{marked surfaces}
The motivating example for the definition of the ray basis is the case where $B$ is the signed adjacency matrix of a marked surface in the sense of \cite{cats1}.
Travis Mandel and Fan Qin~\cite{MandelQin} showed that in this case, the theta basis coincides with the bracelets basis of \cite{MSWbases} (except that for the once-punctured torus, the two bases agree up to certain constant multiples.)
Also in most marked surfaces cases, the rational part of the mutation fan is known to be a rational, simplicial fan called the rational quasi-lamination fan, defined in terms of shear coordinates of curves.
(The exceptions are the once-punctured surface without boundary and with genus~$>1$, where this fact is expected to be true, but not yet proved.)
The integrality of the rational lamination fan is a consequence of a theorem of William Thurston, quoted as \cite[Theorem~12.3]{cats2} and rephrased for this purpose as \cite[Theorem~3.10]{unisurface}.
The ray basis in this case is easily seen to coincide with the bangles basis of \cite{MSWbases}.
Thus the ray basis in general should be viewed as a generalization of the bangles basis.
\end{example}

\bibliographystyle{plain}
\bibliography{bibliography}

@article {ca4,
    AUTHOR = {Fomin, Sergey and Zelevinsky, Andrei},
     TITLE = {Cluster algebras. {IV}. {C}oefficients},
   JOURNAL = {Compos. Math.},
  FJOURNAL = {Compositio Mathematica},
    VOLUME = {143},
      YEAR = {2007},
    NUMBER = {1},
     PAGES = {112--164},
      ISSN = {0010-437X},
   MRCLASS = {16S99 (05E15 14M17 22E46)},
  MRNUMBER = {2295199 (2008d:16049)},
MRREVIEWER = {Christof Gei{\ss}},
       DOI = {10.1112/S0010437X06002521},
       URL = {http://0-dx.doi.org.ilsprod.lib.neu.edu/10.1112/S0010437X06002521},
}

@article{ca3,
    AUTHOR = {Berenstein, Arkady and Fomin, Sergey and Zelevinsky, Andrei},
     TITLE = {Cluster algebras. {III}. {U}pper bounds and double {B}ruhat
              cells},
   JOURNAL = {Duke Math. J.},
  FJOURNAL = {Duke Mathematical Journal},
    VOLUME = {126},
      YEAR = {2005},
    NUMBER = {1},
     PAGES = {1--52},
      ISSN = {0012-7094},
   MRCLASS = {16S99 (05E15 14M17 22E46)},
  MRNUMBER = {2110627},
       DOI = {10.1215/S0012-7094-04-12611-9},
       URL = {https://doi-org.prox.lib.ncsu.edu/10.1215/S0012-7094-04-12611-9},
}

@incollection {Nakanishi11a,
    AUTHOR = {Nakanishi, Tomoki and Zelevinsky, Andrei},
     TITLE = {On tropical dualities in cluster algebras},
 BOOKTITLE = {Algebraic groups and quantum groups},
    SERIES = {Contemp. Math.},
    VOLUME = {565},
     PAGES = {217--226},
 PUBLISHER = {Amer. Math. Soc.},
   ADDRESS = {Providence, RI},
      YEAR = {2012},
   MRCLASS = {13F60},
  MRNUMBER = {2932428},
       DOI = {10.1090/conm/565/11159},
       URL = {http://0-dx.doi.org.ilsprod.lib.neu.edu/10.1090/conm/565/11159},
}

@article {universal,
    AUTHOR = {Reading, Nathan},
     TITLE = {Universal geometric cluster algebras},
   JOURNAL = {Math. Z.},
  FJOURNAL = {Mathematische Zeitschrift},
    VOLUME = {277},
      YEAR = {2014},
    NUMBER = {1-2},
     PAGES = {499--547},
      ISSN = {0025-5874,1432-1823},
   MRCLASS = {13F60 (05E15 20F55 52B12)},
  MRNUMBER = {3205782},
MRREVIEWER = {Xueqing\ Chen},
       DOI = {10.1007/s00209-013-1264-4},
       URL = {https://doi.org/10.1007/s00209-013-1264-4},
}

@article {unisurface,
    AUTHOR = {Reading, Nathan},
     TITLE = {Universal geometric cluster algebras from surfaces},
   JOURNAL = {Trans. Amer. Math. Soc.},
  FJOURNAL = {Transactions of the American Mathematical Society},
    VOLUME = {366},
      YEAR = {2014},
    NUMBER = {12},
     PAGES = {6647--6685},
      ISSN = {0002-9947,1088-6850},
   MRCLASS = {57Q15 (13F60)},
  MRNUMBER = {3267022},
MRREVIEWER = {Yu\ Zhou},
       DOI = {10.1090/S0002-9947-2014-06156-4},
       URL = {https://doi.org/10.1090/S0002-9947-2014-06156-4},
}

@article{GHKK,
    AUTHOR = {Gross, Mark and Hacking, Paul and Keel, Sean and Kontsevich,
              Maxim},
     TITLE = {Canonical bases for cluster algebras},
   JOURNAL = {J. Amer. Math. Soc.},
  FJOURNAL = {Journal of the American Mathematical Society},
    VOLUME = {31},
      YEAR = {2018},
    NUMBER = {2},
     PAGES = {497--608},
      ISSN = {0894-0347},
   MRCLASS = {13F60 (14J33)},
  MRNUMBER = {3758151},
MRREVIEWER = {Ralf Schiffler},
       DOI = {10.1090/jams/890},
       URL = {https://doi-org.prox.lib.ncsu.edu/10.1090/jams/890},
}

@article {affdenom,
    AUTHOR = {Reading, Nathan and Stella, Salvatore},
     TITLE = {An affine almost positive roots model},
   JOURNAL = {J. Comb. Algebra},
  FJOURNAL = {Journal of Combinatorial Algebra},
    VOLUME = {4},
      YEAR = {2020},
    NUMBER = {1},
     PAGES = {1--59},
      ISSN = {2415-6302,2415-6310},
   MRCLASS = {20F55 (05E16 13F60 17B22)},
  MRNUMBER = {4073889},
       DOI = {10.4171/jca/37},
       URL = {https://doi.org/10.4171/jca/37},
}

@article {affscat,
    AUTHOR = {Reading, Nathan and Stella, Salvatore},
  journal = {C.~R.~Math. Acad. Sci. Paris},
  volume = {to appear},
    title={Cluster scattering diagrams of acyclic affine type},
    year={2026},
}

@misc{affdomreg,
      author={Nathan Reading and Dylan Rupel and Salvatore Stella},
      journal={arXiv:2512.02218}, 
      title={Dominance regions for affine cluster algebras}, 
      year={2025},
}

@article {scatcomb,
    AUTHOR = {Reading, Nathan},
     TITLE = {A combinatorial approach to scattering diagrams},
   JOURNAL = {Algebr. Comb.},
  FJOURNAL = {Algebraic Combinatorics},
    VOLUME = {3},
      YEAR = {2020},
    NUMBER = {3},
     PAGES = {603--636},
      ISSN = {2589-5486},
   MRCLASS = {13F60},
  MRNUMBER = {4113600},
MRREVIEWER = {Ibrahim\ Saleh},
       DOI = {10.5802/alco.107},
       URL = {https://doi.org/10.5802/alco.107},
}

@article {scatfan,
    AUTHOR = {Reading, Nathan},
     TITLE = {Scattering fans},
   JOURNAL = {Int. Math. Res. Not. IMRN},
  FJOURNAL = {International Mathematics Research Notices. IMRN},
      YEAR = {2020},
    NUMBER = {23},
     PAGES = {9640--9673},
      ISSN = {1073-7928,1687-0247},
   MRCLASS = {13F60 (05E14 14J33)},
  MRNUMBER = {4182806},
MRREVIEWER = {Fan\ Qin},
       DOI = {10.1093/imrn/rny260},
       URL = {https://doi.org/10.1093/imrn/rny260},
}

@article {FanQin,
    AUTHOR = {Qin, Fan},
     TITLE = {Bases for upper cluster algebras and tropical points},
   JOURNAL = {J. Eur. Math. Soc. (JEMS)},
  FJOURNAL = {Journal of the European Mathematical Society (JEMS)},
    VOLUME = {26},
      YEAR = {2024},
    NUMBER = {4},
     PAGES = {1255--1312},
      ISSN = {1435-9855,1435-9863},
   MRCLASS = {13F60},
  MRNUMBER = {4721032},
       DOI = {10.4171/jems/1308},
       URL = {https://doi.org/10.4171/jems/1308},
}

@article {cats2,
    AUTHOR = {Fomin, Sergey and Thurston, Dylan},
     TITLE = {Cluster algebras and triangulated surfaces {P}art {II}:
              {L}ambda lengths},
   JOURNAL = {Mem. Amer. Math. Soc.},
  FJOURNAL = {Memoirs of the American Mathematical Society},
    VOLUME = {255},
      YEAR = {2018},
    NUMBER = {1223},
     PAGES = {v+97},
      ISSN = {0065-9266},
      ISBN = {978-1-4704-2967-6; 978-1-4704-4823-3},
   MRCLASS = {13F60 (30F60 57M50)},
  MRNUMBER = {3852257},
MRREVIEWER = {Christof Gei\ss },
       DOI = {10.1090/memo/1223},
       URL = {https://doi-org.prox.lib.ncsu.edu/10.1090/memo/1223},
}

@article {cats1,
    AUTHOR = {Fomin, Sergey and Shapiro, Michael and Thurston, Dylan},
     TITLE = {Cluster algebras and triangulated surfaces. {I}. {C}luster
              complexes},
   JOURNAL = {Acta Math.},
  FJOURNAL = {Acta Mathematica},
    VOLUME = {201},
      YEAR = {2008},
    NUMBER = {1},
     PAGES = {83--146},
      ISSN = {0001-5962},
   MRCLASS = {57Q15 (13F60 32G15 52B70)},
  MRNUMBER = {2448067},
MRREVIEWER = {Christof Gei\ss },
       DOI = {10.1007/s11511-008-0030-7},
       URL = {https://doi-org.prox.lib.ncsu.edu/10.1007/s11511-008-0030-7},
}

@article {MSWbases,
    AUTHOR = {Musiker, Gregg and Schiffler, Ralf and Williams, Lauren},
     TITLE = {Bases for cluster algebras from surfaces},
   JOURNAL = {Compos. Math.},
  FJOURNAL = {Compositio Mathematica},
    VOLUME = {149},
      YEAR = {2013},
    NUMBER = {2},
     PAGES = {217--263},
      ISSN = {0010-437X},
   MRCLASS = {13F60 (05C70 05E15)},
  MRNUMBER = {3020308},
MRREVIEWER = {Olga Kravchenko},
       DOI = {10.1112/S0010437X12000450},
       URL = {https://doi-org.prox.lib.ncsu.edu/10.1112/S0010437X12000450},
}

@article {Muller,
    AUTHOR = {Muller, Greg},
     TITLE = {The existence of a maximal green sequence is not invariant
              under quiver mutation},
   JOURNAL = {Electron. J. Combin.},
  FJOURNAL = {Electronic Journal of Combinatorics},
    VOLUME = {23},
      YEAR = {2016},
    NUMBER = {2},
     PAGES = {Paper 2.47, 23},
   MRCLASS = {13F60},
  MRNUMBER = {3512669},
MRREVIEWER = {Fan Qin},
}

@article {CGMMRSW,
    AUTHOR = {Cheung, Man Wai and Gross, Mark and Muller, Greg and Musiker,
              Gregg and Rupel, Dylan and Stella, Salvatore and Williams,
              Harold},
     TITLE = {The greedy basis equals the theta basis: a rank two haiku},
   JOURNAL = {J. Combin. Theory Ser. A},
  FJOURNAL = {Journal of Combinatorial Theory. Series A},
    VOLUME = {145},
      YEAR = {2017},
     PAGES = {150--171},
      ISSN = {0097-3165},
   MRCLASS = {13F60},
  MRNUMBER = {3551649},
MRREVIEWER = {Fan Qin},
       DOI = {10.1016/j.jcta.2016.08.004},
       URL = {https://doi-org.prox.lib.ncsu.edu/10.1016/j.jcta.2016.08.004},
}

@article {afftheta,
    AUTHOR = {Reading, Nathan and Stella, Salvatore},
  journal = {arXiv:2603.23429},
    title={Theta functions in acyclic affine type},
    year={2026},
}

@article {RupelStella,
    AUTHOR = {Rupel, Dylan and Stella, Salvatore},
     TITLE = {Dominance regions for rank two cluster algebras},
   JOURNAL = {Ann. Comb.},
  FJOURNAL = {Annals of Combinatorics},
    VOLUME = {27},
      YEAR = {2023},
    NUMBER = {4},
     PAGES = {873--894},
      ISSN = {0218-0006,0219-3094},
   MRCLASS = {13F60},
  MRNUMBER = {4657334},
       DOI = {10.1007/s00026-023-00636-4},
       URL = {https://doi.org/10.1007/s00026-023-00636-4},
}

@article {MandelQin,
    AUTHOR = {Chen, Qiyue and Mandel, Travis and Qin, Fan},
     TITLE = {Stability scattering diagrams and quiver coverings},
   JOURNAL = {Adv. Math.},
  FJOURNAL = {Advances in Mathematics},
    VOLUME = {459},
      YEAR = {2024},
     PAGES = {Paper No. 110019, 30},
      ISSN = {0001-8708,1090-2082},
   MRCLASS = {13F60 (16G20)},
  MRNUMBER = {4828747},
MRREVIEWER = {Yichao\ Yang},
       DOI = {10.1016/j.aim.2024.110019},
       URL = {https://doi-org.prox.lib.ncsu.edu/10.1016/j.aim.2024.110019},
}
\vspace{-0.175 em}

\end{document}